\numberwithin{equation}{section}
\newcommand{\diff}{\,\mathrm{d}}
\newcommand{\diffns}{\mathrm{d}}
\newcommand{\hata}{\hat\alpha}
\newtheorem{defi}{Definition}[section]
\newtheorem{thm}[defi]{Theorem}
\newtheorem{lemm}[defi]{Lemma}
\newtheorem{remark}[defi]{Remark}
\newtheorem{cor}[defi]{Corollary}
\newtheorem{assum}[defi]{Assumption}
\newtheorem{prop}[defi]{Proposition}
\newtheorem{prob}[defi]{Problem}
\newcommand{\Q}{\mathbb{Q}}
\newcommand{\R}{\mathbb{R}}
\newcommand{\E}{\mathbb{E}}
\newcommand{\PP}{\mathbb{P}}
\DeclareMathOperator{\sgn}{sgn}
\newcommand{\norm}[1]{\left\Vert#1\right\Vert}
\newcommand{\abs}[1]{\left\vert#1\right\vert}
\begin{document}

	\title[ SMP for systems with B-V drifts]{Stochastic Optimal Control for Systems with Drifts of Bounded Variation: A Maximum Principle Approach}
	\author[A.-M. Bogso]{Antoine-Marie Bogso}
	\address{Faculty of Sciences, Department of Mathematics, University of Yaounde I,
		P.O. Box 812, Yaounde, Cameroon}
	\email{antoine.bogso@facsciences-uy1.cm} 
	\author[R. Likibi Pellat]{Rhoss Likibi Pellat}
	\address{Laboartoire J. A Dieudonn\'e, CNRS, Universit\'e C\^ote d'Azur, Nice, France}
	\email{rhoss.likibi-pellat@univ-cotedazur.fr}
	
	\author[W. Kuissi Kamdem]{Wilfried Kuissi Kamdem}
	\address{African Institute for Mathematical Sciences, Ghana}
	\address{Department of Mathematics, University of Rwanda, Ghana}
	\email{donatien@aims.edu.gh}
	\author[O. Menoukeu Pamen]{Olivier Menoukeu Pamen}
	\address{IFAM, Department of Mathematical Sciences, University of Liverpool, L69 7ZL, United Kingdom}
	\email{menoukeu@liverpool.ac.uk}

	\date{}

	\keywords{Stochastic Maximum Principle; Sobolev differentiable flow; Ekeland's variational principle; Local time; Malliavin calculus.}
	
	\subjclass[2010]{Primary: 60H10, 60H07, 49J55, Secondary: 60J55, ,60H30} 
	\maketitle
	\begin{abstract}
		We study a stochastic control problem for nonlinear systems governed by
		stochastic differential equations with irregular drift. The drift
		coefficient is assumed to decompose as
		$b(t,x,a)=b_1(t,x)+b_2(x)b_3(t,a)$, where $b_1$ is bounded and Borel
		measurable, $b_2$ has bounded variation, and $b_3$ is bounded and
		smooth. Under these minimal regularity assumptions, we establish a
		Pontryagin--type stochastic maximum principle. The analysis relies on
		new results for SDEs with random drift of bounded variation, including
		existence, uniqueness, and Malliavin--Sobolev differentiability of the
		state process. A key ingredient is an explicit representation of the
		first variation process obtained via integration with respect to the
		space--time local time of bounded variation processes. By combining a
		suitable approximation scheme with Ekeland’s variational principle, and
		using a Garcia--Rodemich--Rumsey inequality to obtain a uniform control
		of the first variation, we derive the maximum principle. As an application, we derive an optimal corridor-type capital adjustment policy for an insurance surplus model.
	\end{abstract}
	\section{Introduction}
	In this paper, we consider stochastic control problems of nonlinear systems,
	where the control domain is convex and the controlled state is given by a stochastic differential equation of the following type:
	\begin{equation}
		\label{eqconpb1}
		X^\alpha_t = x + \int_0^tb(u,X^{\alpha}_u,\alpha_u)\mathrm{d}u +\sigma B_t,
	\end{equation}
	where $b$ is a deterministic function and $\sigma$ is a constant vector, $B$ is a $d$-dimensional Brownian motion on a probability space $(\Omega, {\mathcal F}, P)$ equipped with the complete filtration $({\mathcal F}_t)_{t\in[0,T]}$. The control variable $\alpha:[0,T]\times \Omega \mapsto \mathbb{A}\subset \mathbb{R}$ is $\mathcal{B}([0,T])\otimes \mathcal{F}$-measurable, and $({\mathcal F}_t)_{t\in[0,T]}$-adapted.
	
	The criterion to be minimised over the set of admissible controls $\mathcal{A}$ (see Definition \ref{def:admissible}) is given by the following form.
	\begin{align} 
		J(\alpha):= \mathbb{E}\Big[ \int_0^T  f(s,X^\alpha_s,\alpha_s)\diff s + g(X^\alpha_T)\Big].\label{perfunct1}
	\end{align}
	An admissible control is considered optimal if it provides a solution to the above problem. We assume that an optimal control exists. Our primary objective is to establish necessary and sufficient Pontryagin-type conditions for optimality in this problem when the drift coefficient is bounded and measurable.
	
	The stochastic maximum principle (SMP), which provides necessary conditions for optimality in stochastic control problems, is a powerful mathematical tool that employs a probabilistic approach. Since its introduction by Kushner (\cite{Kush72}), under the assumption that the coefficients are sufficiently regular and the set of control is convex, it has been extensively studied (see for example \cite{Ben81, Bis78, Haus86}). 
	
	The key idea behind SMP is to introduce adjoint processes and derive a Hamiltonian function that characterises the optimal control strategy. The convexity assumption was relaxed in \cite{Pen90}, leading to the derivation of an SMP in a global form, expressed in terms of first- and second-order adjoint equations. 
	Under additional convexity conditions, this necessary condition is sufficient. This method is particularly useful for solving problems in finance, economics, engineering, and other fields where uncertainty plays a crucial role (see \cite{YZ99}).

	The SMP is typically derived under the assumption that the coefficients of the state process are differentiable, making it inapplicable to optimal control problems with non-smooth coefficients. To the best of our knowledge, the first work to address the stochastic maximum principle with non-smooth coefficients was by Mezerdi \cite{Mer88}, where the author established a necessary condition of optimality for a problem with a Lipschitz continuous drift—though not necessarily differentiable in the state and control variables. This result has been extended in several directions (see \cite{BCDM09, BDM07}), with all extensions still requiring some form of smoothness in the coefficients.
	
	In a recent study \cite{MenTan22}, the authors established a necessary and sufficient stochastic maximum principle for the optimal control of systems governed by stochastic differential equations with irregular drift coefficients. More precisely, they assumed $b$ satisfies $b(t,x,a):= b_1(t,x) + b_2(t,x,a)$ where $b_1$ is a bounded, Borel measurable function and $b_2$ is bounded measurable, and continuously differentiable in its second and third variables with bounded derivatives. To address the challenges posed by irregularity, they first derived an explicit representation of the first variation process (in the Sobolev sense) of the controlled process. They then constructed a sequence of optimal control problems with smooth coefficients using an approximation argument and applied Ekeland's variational principle to obtain an approximating adjoint process. Finally, by passing to the limit, they derived the stochastic maximum principle. Similar approach was also used to study SMP for McKean-Vlasov SDEs with singular drift in \cite{TBMP24}.

	Our work is closely related to \cite{MenTan22}, with the key distinction that we assume the drift function takes the form 
	$b(t,x,a):= b_1(t,x) +b_2(x) \times b_3(t,a)$, where $b_1$ is bounded and measurable, $b_2$
	is of bounded variation and $b_3$ is bounded and continuously differentiable in the second variable. Our motivation stems from optimal capital adjustment or dividend withdrawal for an insurance firm  in which the dynamics of its surplus process evolves under a \emph{corridor-type control} mechanism.
	(see Section \ref{motivexam} for further details). The primary challenge in this study lies in handling the irregularity of $b_2$. We address this by first deriving an existence,uniqueness and smoothness results (in Malliavin and Sobolev sense) for SDEs with random drift of the form $b(t,x,\omega):= b_1(t,x)+b_2(x) \times b_3(t,\omega)$. We then employ integration with respect to space-time local time for bounded variation random processes (see Proposition \ref{prop:EisenbaumBV}) to obtain an explicit representation of the first variation process (in the Sobolev sense) of the controlled process. The SMP is then obtained by an approximation argument. 
	
	Moreover, despite the lack of smoothness in $b_1, b_2$ one can leverage their boundedness, the mean-value theorem, and the Girsanov theorem to establish the $\mathrm{d}t\times \PP$ a.e convergence of the derivative of the approximating Hamiltonian with respect to the control (see the proof of Theorem \ref{thm:necc}). Furthermore, providing a uniform control of the supremum norm of the difference between the approximated solution of the SDE and the actual solution is a crucial component of our analysis (see Lemma \ref{lem:conv.Xnn}). To achieve this, we use a Garcia-Rodemich-Rumsey-type result.	Overall, our approach provides a novel framework for studying stochastic control problems with irregular drift.

	SDEs with random coefficients have been investigated in the literature, often under stronger assumptions. In particular, when both the initial condition and the drift coefficient are allowed to anticipate the future of the driving Brownian motion, the authors in \cite{OcPa89} employ a generalised Itô–Ventzel formula for anticipating integrands to study Stratonovich-type SDEs. They show that if the initial condition is Malliavin differentiable, the drift is Malliavin smooth, and both its spatial and Malliavin derivatives exhibit polynomial growth, then the SDE admits a unique, non-exploding, Malliavin differentiable solution. In \cite{HLN97}, the authors establish the existence and uniqueness of solutions to SDEs with random initial conditions and random coefficients, provided the stochastic integral is interpreted in the generalised Stratonovich sense and the coefficients satisfy Lipschitz continuity. More recently, in \cite{MenTan19}, under the assumption that the drift takes the form $b(t,x,\omega)=b_1(t,x)+b_2(t,x,\omega)$, where $b_1$ has spatial linear growth and $b_2$ is smooth in its second variable and satisfies some integrability conditions, the existence, uniqueness, and Malliavin smoothness of a strong solution to SDE \eqref{eq:SDErvmain1} are established. This is achieved using a purely probabilistic approach based on Malliavin calculus and a compactness criterion.
	
	Another contribution of this work is the establishment of existence, uniqueness, and smoothness of solutions to the SDE \eqref{eq:SDErvmain1}, under the assumption that the drift coefficient takes the form $	b(t,\omega,x)= b_1(t,x)+ b_2(x)b_3(t,\omega)$, where $b_1$ is Borel measurable and bounded, $b_2$ is of bounded variation and $b_3$ satisfies certain integrability conditions. The primary challenge arises from the fact that $b_2$ is of bounded variation. To address this, we first define an integral with respect to space-time local time for such integrands. Then, by applying the estimate in Proposition \ref{propSha1}, we show the relative compactness of a sequence of approximated solutions—a crucial step in our proof. As a byproduct of our approach, we also establish the Malliavin differentiability of the solution. Furthermore, we show that the solution is Sobolev differentiable with respect to its initial condition. In addition, we derive a uniform bound on the supremum norm of the derivative with respect to the initial condition (see Proposition \ref{theosup1} and Corollary \ref{corsupclasder}). Despite the roughness of the drift introducing, we overcome the difficulty by leveraging a Garcia-Rodemich-Rumsey-type inequality to derive the required bound. The above uniform bound is key for proving our stochastic maximum principle.


	The remainder of the paper is organised as follows. 
	Section~\ref{motivexam} presents a motivating example together with the general formulation of the control problem. 
	Section~\ref{secmaxprinres} establishes the necessary and sufficient maximum principles. 
	Section~\ref{SectRanSDE} provides results on the existence, uniqueness, and regularity of stochastic differential equations with random drift. 
	Section~\ref{seclocaltime} addresses space–time integration with respect to the local time of random processes of bounded variation. 
	Section~\ref{appli} applies the preceding results to derive the optimal capital adjustment or dividend withdrawal strategy for an insurance firm. 
	Section~\ref{proffmainr} contains the detailed proofs of the main theoretical results, while the Appendix collects supplementary proofs and a technical result.

	\section{A motivating example and main results}\label{motivexam}
	\subsection{Probabilistic setting and notation}
	Let $T \in (0,\infty)$ and $d \in \mathbb{N}$ be fixed and consider a probability space $(\Omega, {\mathcal F}, \PP)$ equipped with the completed filtration $({\mathcal F}_t)_{t\in[0,T]}$ of a $d$-dimensional Brownian motion $B$.
	Throughout the paper, the product $\Omega \times [0,T]$ is endowed with the predictable $\sigma$-algebra. Subsets of $\mathbb{R}^k$, $k\in\mathbb{N}$, are always endowed with the Borel $\sigma$-algebra induced by the Euclidean norm $|\cdot|$. The interval $[0,T]$ is equipped with the Lebesgue measure. Unless otherwise stated, all equalities and inequalities between random variables and processes will be understood in the $\PP$-almost sure and $\PP\otimes \diffns t$-almost sure sense, respectively.
	For $p \in [1, \infty]$ and $k\in\mathbb{N}$, denote by ${\mathcal S}^p(\mathbb{R}^k)$
	the space of all adapted continuous processes $X$
	with values in $\mathbb{R}^k$ such that
	$\norm{X}_{{\mathcal{S}}^p(\mathbb{R}^k)}^p :=  \E[(\sup\nolimits_{t \in [0,T]}\abs{X_t})^p] < \infty$,
	and by ${\mathcal H}^p(\mathbb{R}^{k})$ the space of all predictable processes $Z$ with values in $\mathbb{R}^{k}$ such that $\norm{Z}_{{\mathcal H}^p(\mathbb{R}^{k})}^p := \E[(\int_0^T\abs{Z_u}^2\diffns u)^{p/2}] < \infty$.
	\subsection{Motivating example} \label{subsec:motivating}
	
	This subsection presents a motivating example that illustrates the type of stochastic control problems considered in this paper. Specifically, we describe the dynamics of a regulated surplus process evolving under a \emph{corridor-type control} mechanism. The model captures the objective of maintaining the surplus within a prescribed target interval while accounting for stochastic fluctuations arising from claims, operational uncertainty, or external market shocks.
	
	\medskip
	\noindent
	Let $X_t$ denote the surplus (or reserve) level at time $t \in [0,T]$. Its evolution is governed by the stochastic differential equation
	\begin{equation}\label{eqmot1}
		\mathrm{d}X_t
		= \big( b_1(X_t) - \alpha_t\,\mathrm{sign}(X_t)\mathbf{1}_{\{|X_t|>H\}} \big)\mathrm{d}t
		+ \sigma\,\mathrm{d}B_t,
		\qquad X_0 = x_0,
	\end{equation}
	where:
	\begin{itemize}
		\item $b_1(x)$ represents the \emph{baseline drift} (or \emph{liability rate}) of the surplus, describing the net deterministic inflow or outflow of cash (e.g., premiums minus claims);
		\item $\alpha_t$ is a progressively measurable control variable representing managerial interventions such as capital adjustments or dividend withdrawals, activated when the surplus exits the corridor  $[-\rho,\rho],\,\,\rho>0$;
		\item $B_t$ is a standard Brownian motion;
		\item $\sigma > 0$ denotes the volatility coefficient associated with random fluctuations.
	\end{itemize}

	\medskip
	In a more general formulation, one may include a proportional term $\delta_t X_t$ in the drift to capture exponential accumulation due to interest rates or investment returns.  
	However, in many practical settings this term is negligible or intentionally excluded, leading to $\delta_t \equiv 0$.  
	This assumption is justified by the following considerations:
	\begin{enumerate}
		\item \textsl{Additive cash-flow dynamics:} The system’s evolution depends primarily on inflows and outflows rather than on the absolute surplus level, resulting in an additive (rather than multiplicative) drift structure.
		\item \textsl{Non–interest-bearing reserves:} The reserves are not assumed to accumulate interest, removing the need for a proportional growth component.
		\item \textsl{Stability-oriented control:} The primary management objective is to maintain stability around a target level rather than to induce surplus growth.
	\end{enumerate}

	\medskip
	We adopt a bounded and smooth drift of the form
	\begin{equation}\label{eq:boundedb}
		b_1(x) = \mu\,\tanh(x/M), 
		\qquad \mu, M > 0.
	\end{equation}
	This functional form captures a \emph{saturation effect}: for large surplus or deficit values, the rate of change stabilizes at $\pm \mu$, reflecting limited sensitivity to extreme states.  
	From an analytical standpoint, $b$ satisfies:
	\begin{itemize}
		\item $b_1(-x) = -b_1(x)$ (\textsl{oddness}), ensuring symmetric response to positive and negative deviations;
		\item $|b_1(x)| \le \mu$ (\textsl{boundedness}), providing realistic growth limits and simplifying the stability analysis.
	\end{itemize}

	\medskip
	
	\noindent\textbf{Control objective.}
	Let $\mathcal{A}$ denote the set of admissible control processes taking values in a closed convex set $\mathbb{A}=[-1,1]\subset\mathbb{R}$.  
	The decision maker aims to select a control $\alpha \in \mathcal{A}$ that minimizes the expected deviation of the surplus from its target level.  
	In its simplest form, this can be expressed as the minimization of a terminal quadratic cost:
	\begin{equation}\label{eq:quadcost_example}
		V^0(x)
		= \inf_{\alpha \in \mathcal{A}} 
		\mathbb{E}\big[(X_T^{\alpha,x})^2\big]
		= \mathbb{E}\big[(X_T^{\hat \alpha, x})^2\big],
	\end{equation}
	where $\hat{\alpha}$ denotes an optimal control (if it exists).  
	More generally, one may consider an instantaneous running cost $f$ and a terminal cost $g$, as formulated in the subsequent section.
	
	Although the term $b_2(x)$ in \eqref{eqmot1} is discontinuous at the corridor boundaries $\pm \rho$, it can still be seen as a bounded nondecreasing function, and thus is of bounded variation.  
	However, optimal control problems involving \emph{bounded variation drifts} fall outside the scope of standard stochastic control theory. Furthermore, recent work by \cite{MenTan22}, which addresses bounded and measurable drift coefficients, does not directly apply in this case due to the involvement of a term $\alpha_t\times b_2(x)$, where $b_2$ is non-Lipschitz. Hence, it motivates the study of optimal control problems with bounded drift, as developed in the next section.

	\subsection{General Problem Formulation}

		Motivated by the preceding example, we consider the following controlled diffusion system:
	\begin{equation}
		\label{eqconpb111}
		X^{\alpha,x}_t = x + \int_0^tb(u,X^{\alpha,x}_u,\alpha_u)\mathrm{d}u +\sigma B_t.
	\end{equation} 
	$B$ is a Brownian motion on a probability space $(\Omega,\mathcal{F},\mathbb{P})$ with filtration $(\mathcal{F}_t)_{t\in[0,T]}$. 
	
	The objective functional is
	\begin{equation}
		J(\alpha)
		= \mathbb{E}\Big[ \int_0^T f(s,X^{\alpha,x}_s,\alpha_s)\,\mathrm{d}s 
		+ g(X_T^{\alpha,x}) \Big].
		\label{eq:performance}
	\end{equation}
	The optimal control problem consists of finding
	\begin{equation}
		V(x) = \inf_{\alpha\in\mathcal{A}} J(\alpha)
		= J(\hat\alpha),
		\label{eq:value}
	\end{equation}
	where $\hat\alpha$ denotes an optimal control.
	
	\begin{defi}[Admissible controls]
		\label{def:admissible}
		Let $\mathbb{A}\subseteq\R$ be a closed convex set. 
		The set of admissible controls is
		\begin{multline*}
			\mathcal{A} := \Big\{\alpha:[0,T]\times \Omega\to \mathbb{A}, \text{ progressive, \eqref{eqconpb1}}\text{ has a unique strong solution and }\\ \E\sup_{t\in[0,T]}|\alpha(t)|^{4}< C \text{ for some } C>0 \Big\}.
		\end{multline*}
		
	\end{defi}

	We assume the following conditions on $f$ and $g$
	\begin{assum}
		\label{mainassum2}
		The functions $f:[0,T]\times\mathbb{R}\times\mathbb{R}\to\mathbb{R}$ and $g:\mathbb{R}\to\mathbb{R}$ satisfy:
		\begin{enumerate}
			\item $f$ and $g$ are continuously differentiable in their arguments;
			\item there exists $C>0$ such that
			\[
			|f(t,x,a)| + |\partial_x f(t,x,a)| + |\partial_a f(t,x,a)|
			\le C(1+|x|^2+|a|^2), \quad \text{for all } (t,x,a)
			\]
			and
			\begin{equation*}
				|g(x)| + |\partial_xg(x)| \le C(1 + |x|^2).
			\end{equation*}
		\end{enumerate}
	\end{assum}
	
	In the next section we state necessary and sufficient stochastic maximum principle for the above problem, allowing the drift coefficient to be only of bounded variation in the state variable.
	
	\section{Stochastic Maximum Principle with Irregular Drift}\label{secmaxprinres}
	
	In this section, we derive necessary and sufficient optimality conditions 
	for the control problem introduced in Section~2. 
	Recall that the controlled state process 
	\(X^{\alpha,x}=\{X_t^{\alpha,x}\}_{t\in[0,T]}\) satisfies
	\[
	\mathrm{d}X_t^{\alpha,x}
	= b(t,X_t^{\alpha,x},\alpha_t)\,\mathrm{d}t + \sigma\,\mathrm{d}B_t,
	\qquad X_0^{\alpha,x}=x,
	\]
	where the drift coefficient \(b\) satisfies the following structural 
	assumptions
	\begin{assum}
		\label{mainassum20}
		$$b(t,x,a):= b_1(t,x) +b_2(x)b_3(t,a),$$ 
		where
		\begin{enumerate}
			\item $b_1$ bounded, Borel measurable function;
			\item $b_2$ of bounded variation;
			\item $b_3$ is bounded measurable, and continuously differentiable in its second variable with bounded derivatives.
		\end{enumerate}	
	\end{assum}
	Our goal is to characterise the optimal control 
	\(\hat\alpha\in\mathcal A\) that minimises \(J(\alpha)\).
	We first state a necessary maximum principle, followed by a sufficient one.
	\begin{thm}[Necessary SMP]
		\label{thm:necc}
		Suppose Assumptions \ref{mainassum2} and \ref{mainassum20} hold.
	Let $\hat\alpha \in \mathcal{A}$ be an optimal control of \eqref{eq:value} and let $X^{\hat\alpha,x}$ be the associated optimal trajectory. Then the first variation $\Phi^{\hat\alpha}$ of $X^{\hat\alpha,x}$ (i.e. $\Phi^{\hat\alpha}=\partial_xX^{\hat\alpha,x}$) is well-defined and it holds 
	\begin{equation}
		\label{eq:nec.cond}
		\partial_{a}H(t, X^{\hat\alpha,x}_t,Y^{\hat\alpha}_t,\hat\alpha_t )\cdot(\beta - \hat\alpha_t) \ge 0 \quad \mathbb{P}\otimes \diff t\text{-a.s. for all } \beta \in \mathcal{A},
	\end{equation} 
	where $H$ is the Hamiltonian defined by $	H(t,x,y,a) := f(t, x,a) + b(t,x,a)y$ and $Y^{\hat\alpha}$ satisfies
	\begin{equation}
		\label{eq:adj.proc}
		Y^{\hat\alpha}_t := \mathbb{E}\big[\Phi^{\hat\alpha}_{t,T} \partial_xg( X^{\hat\alpha,x}_T) + \int_t^T\Phi^{\hat\alpha}_{t,s} \partial_xf(s, X^{\hat\alpha,x}_s, \hat\alpha_s))\mathrm{d}s\mid \mathcal{F}_t \big],
	\end{equation}	
	where 
	\begin{equation*} 
		\Phi^\alpha_{t,s}  =e^{-\int_t^s\int_{\mathbb{R}} b(u,z,\alpha_u) L^{X^{\alpha,x}}(\diffns u,\diffns z)} , \,\,\,0\leq t\leq s\leq T,
	\end{equation*}
	and the double integral above is taken over both time and space with respect to the local time of the process $X^{\alpha,x}$
	\end{thm}
	\begin{proof}[Idea of the proof]
		We prove the theorem in several steps. First, we construct a sequence of optimal control problems with smooth coefficients using an approximation argument and apply \emph{Ekeland’s variational principle} to obtain an approximating adjoint process. Then, by passing to the limit, we derive the \emph{stochastic maximum principle}. 
		
		One of the main challenges in this approach lies in establishing the almost everywhere convergence of the derivative of the approximating Hamiltonian with respect to the control variable. This difficulty is overcome by obtaining a uniform bound on the supremum norm of the difference between the approximated solution of the SDE and the actual solution, using a \emph{Garcia--Rodemich--Rumsey-type inequality} (see Lemma~\ref{lem:conv.Xnn}). The complete proof is provided in Subsection \ref{proofteonecmaxp} 
	\end{proof}
	
	\begin{thm}[Sufficient SMP]
		\label{thm:suff}
			Suppose Assumptions \ref{mainassum2} and \ref{mainassum20} hold. Let $\alpha \in \mathcal{A}$ and let $X^{\hat \alpha}$ be the associated trajectory and $Y^{\hat \alpha}$ be the corresponding adjoint given by \eqref{eq:adj.proc}. In addition, assume that $g$ and $(x,a)\mapsto H(t,x,Y^{\hat \alpha}_t,a)$ are concave for all $t\in [0,T]$. Further assume that 
		\begin{equation}
			\label{eq:suff.con}
			\partial_{a} H(t, X^{\hat\alpha,x}_t, Y^{\hat\alpha}_t, \hat\alpha_t)=0, \quad \mathbb{P}\otimes \diff t\text{-a.s.,}
		\end{equation}
		Then, $\hat\alpha$ is an optimal control.
	\end{thm}
	\begin{proof}
		See Subsection \ref{proofteosufmaxp}.
	\end{proof}
	
	In the next section, we justify that the SDEs involved indeed possess strong solutions and differentiable flows under bounded variation drift assumptions

	
	\section{SDEs with Random Irregular Drift: Existence, Uniqueness, and Regularity}\label{SectRanSDE}
	
	In this section, we consider the SDE
	\begin{equation}
		\label{eq:SDE sumprod}
		X^x_t = x + \int_0^t\big( b_1(u,X^x_u) +b_{2}(X^x_u)b_3(u,\omega)\big)\diffns u +\sigma B_t.
	\end{equation}
	To ease the notation, we introduce the function
	\begin{equation}\label{eqdriftassu}
		b(t,\omega,x):= b_1(t,x)+ b_2(x)b_3(t,\omega),
	\end{equation}
	Then, equation \eqref{eq:SDE sumprod} now takes the form
	\begin{equation}	\label{eq:SDErvmain1}
		X^x_t = x + \int_0^t b(u,\omega, X^x_u)\,du + \sigma B_t.
	\end{equation}
	We suppose the following:
	\begin{assum}\label{mainassum}
		\leavevmode
		\begin{enumerate}
			\item[(AX1)] \begin{itemize}
				\item The function $ b_1:[0,T]\times \mathbb{R}\to \mathbb{R}$ is Borel measurable and bounded.
				\item The function $b_2: \mathbb{R}\to \mathbb{R}$ is Borel measurable and of bounded variation. 
				
				\item  $b_3:[0,T]\times \Omega\to \mathbb{R}$ is an adapted bounded variation process, that is Malliavin differentiable for every $t$, and $b_3$ as well as its Malliavin derivative are bounded. That is, there is a positive random variable $M(\omega)$ such that 
				\begin{equation*}
					|\frac{\partial b_3}{\partial t}(t,\omega)|+	|b_3(t,\omega)| + |D^j_tb_3(t,\omega)| \le M(\omega),\,\,\,j=1,\ldots,d,
				\end{equation*}
				where $D_t^j$ is the $j$-th component of the Malliavin derivative of $b_3(t,\omega)$				and $M(\omega)$ satisfies 
				$$
				\mathbb E[e^{qM^2(\omega)}]<\infty \text{ for all }  q>0.
				$$
				\item  $\sigma \in \mathbb{R}^d$ and $|\sigma|^2>0$.		
			\end{itemize}
			\item[(AX2)]  There exist constants $C, \theta  > 0 $ such that
			$$\mathbb{E}[| D_tb_3(s,  \omega )  -  D_{t^\prime} b_3(s, \omega )|^4] \leq  C| t^\prime- t|^\theta .$$
			\label{a1}
		\end{enumerate}
	\end{assum}
	We start by recalling the following result from \cite[Proposition 2.1]{Shaposhnikov_2016}
	\begin{prop}\label{propSha1}
		Let $b\in C([0,T],C^1_b(\mathbb{R},\mathbb{R}))$. There exists a constant $C$ only dependent on $\|b\|_\infty$ and a constant $k$ that does not depend on $b$ such that the following inequality holds:
		$$
		\mathbb{E} \exp\big\{k\big|\int_0^T \partial_xb(t,B_t)\diffns t\big|^2\big|\big\}\leq C,
		$$
		where $\partial_xb$ denotes the spacial derivative of $b$. 
	\end{prop}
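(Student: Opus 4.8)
The plan is to apply It\^o's formula so as to rewrite $Y:=\int_0^1 \partial_x b(t,B_t)\diff t$ (here $B$ is one-dimensional, and since the integrand only involves the spatial derivative we simply write $b'=\partial_x b$, so $Y=\int_0^1 b'(B_t)\diff t$) in a form in which the rough factor $b'$ has been replaced by the bounded factor $b$. Set $F(x):=\int_0^x b(y)\diff y$. Since $b\in C^1_b(\R,\R)$ we have $F\in C^2(\R)$ with $F'=b$ and $F''=b'$ bounded, so It\^o's formula applied to $F(B_t)$ on $[0,1]$, after rearranging, gives
\[
Y \;=\; 2\bigl(F(B_1)-F(B_0)\bigr)\;-\;M_1,\qquad M_t:=2\int_0^t b(B_s)\diff B_s.
\]
Both terms on the right are governed solely by $\norm{b}_\infty$: one has $\abs{F(B_1)-F(B_0)}=\bigl|\int_{B_0}^{B_1}b(y)\diff y\bigr|\le\norm{b}_\infty\abs{B_1-B_0}$, which is a Gaussian variable, while $M$ is a continuous martingale with $M_0=0$ whose bracket obeys the \emph{deterministic} bound $\langle M\rangle_1=4\int_0^1 b(B_s)^2\diff s\le 4\norm{b}_\infty^2$.

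From here I would argue as follows. Using $\abs{Y}\le 2\norm{b}_\infty\abs{B_1-B_0}+\abs{M_1}$ and hence $\abs{Y}^2\le 8\norm{b}_\infty^2\abs{B_1-B_0}^2+2\abs{M_1}^2$, the Cauchy--Schwarz inequality gives
\[
\E\bigl[e^{k\abs{Y}^2}\bigr]\;\le\;\E\bigl[e^{16k\norm{b}_\infty^2\abs{B_1-B_0}^2}\bigr]^{1/2}\;\E\bigl[e^{4k\abs{M_1}^2}\bigr]^{1/2}.
\]
The first factor is a finite Gaussian integral as soon as $16k\norm{b}_\infty^2<\tfrac12$. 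For the second, I would use the deterministic bound on $\langle M\rangle_1$: the stochastic exponential $\exp\bigl(\theta M_t-\tfrac{\theta^2}{2}\langle M\rangle_t\bigr)$ is a positive supermartingale, so $\E\bigl[e^{\theta M_1}\bigr]\le e^{2\theta^2\norm{b}_\infty^2}$ for every $\theta\in\R$; combining this with the elementary identity $e^{\lambda x^2}=\E\bigl[e^{\sqrt{2\lambda}\,g\,x}\bigr]$ for $g\sim N(0,1)$ independent of $M$, and an application of Fubini, yields $\E\bigl[e^{4k\abs{M_1}^2}\bigr]\le(1-32k\norm{b}_\infty^2)^{-1/2}$ whenever $32k\norm{b}_\infty^2<1$. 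Choosing $k$ to be a fixed multiple of $\norm{b}_\infty^{-2}$, for instance $k=(64\norm{b}_\infty^2)^{-1}$, makes both factors bounded by an absolute constant and proves the inequality, with $C$ absolute and $k$ depending on $b$ only through $\norm{b}_\infty$.

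The one point that really matters — and the reason the constant ends up depending on $\norm{b}_\infty$ and not on $\norm{b'}_\infty$ — is the deterministic control of $\langle M\rangle_1$, which is exactly what the integration by parts encoded in It\^o's formula provides: it trades $\int_0^1 b'(B_s)\diff s$ for a stochastic integral of the bounded function $b$, at the cost of the harmless boundary term $F(B_1)-F(B_0)$. The same mechanism admits a local-time formulation, $\int_0^1 b'(B_t)\diff t=\int_\R b'(a)L^a_1\diff a$ followed by integration by parts in $a$ against the continuous, compactly supported map $a\mapsto L^a_1$, and it is this viewpoint that extends to drifts of bounded variation. If instead one needs the statement for a drift $b(t,x)$ genuinely depending on time, the same plan goes through with $F$ replaced by the solution $u$ of the backward heat equation $\partial_t u+\tfrac12\partial_{xx}u=-\partial_x b$, $u(1,\cdot)=0$, together with Krylov's extension of It\^o's formula.
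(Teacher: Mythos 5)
The paper does not prove Proposition~\ref{propSha1}: it is imported verbatim from \cite[Proposition 2.1]{Shaposhnikov_2016}, so there is no internal proof to compare against. Your argument is nonetheless essentially the Davie--Shaposhnikov argument for the time-independent case: apply It\^o's formula to the antiderivative $F$ of $b$ to trade the rough $\int_0^1 b'(B_t)\,\mathrm{d}t$ for the boundary term $2(F(B_1)-F(0))$, which is $O(\|b\|_\infty\,|B_1|)$, plus a martingale $M_1=2\int_0^1 b(B_s)\,\mathrm{d}B_s$ whose bracket is deterministically bounded by $4\|b\|_\infty^2$, and then run the standard Gaussian/exponential-supermartingale calculation. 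The individual estimates (the elementary inequality $|Y|^2\le 8\|b\|_\infty^2|B_1|^2+2|M_1|^2$, Cauchy--Schwarz, the Gaussian-linearisation trick $e^{\lambda x^2}=\E[e^{\sqrt{2\lambda}gx}]$, and the bound $\E[e^{\theta M_1}]\le e^{2\theta^2\|b\|_\infty^2}$) are all correct, and the backward-heat-equation remark is the right way to cover a genuinely time-dependent $b(t,x)$, which the paper's statement gestures at despite declaring $b\in C^1_b(\R,\R)$.

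The one substantive point to flag concerns the roles of $k$ and $C$. As you observe, your argument forces $k$ to scale like $\|b\|_\infty^{-2}$ (you take $k=(64\|b\|_\infty^2)^{-1}$), giving an \emph{absolute} $C$ and a $k$ depending on $b$ only through $\|b\|_\infty$. The proposition as worded in the paper claims the opposite allocation: $k$ universal and $C=C(\|b\|_\infty)$. That version is not actually correct for arbitrary $\|b\|_\infty$: taking $b_\eps(x)=\eps^{-1}\sgn(x)\min(|x|,\eps)$ gives $\|b_\eps\|_\infty=1$ while $\int_0^1 b_\eps'(B_t)\,\mathrm{d}t\to 2L^0_1\overset{d}{=}2|B_1|$ as $\eps\downarrow 0$, so $\sup_\eps\E\big[e^{k|\int_0^1 b_\eps'(B_t)\,\mathrm{d}t|^2}\big]=\infty$ once $k\ge 1/8$. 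Thus the constant $k$ in Shaposhnikov's estimate is universal only after normalising $\|b\|_\infty\le 1$ (which is how Davie and Shaposhnikov state it), and in general must degenerate like $\|b\|_\infty^{-2}$ exactly as your proof produces. So your proof is of the version of the statement that is actually true and is the version used later in the paper (where the functions fed into the proposition are uniformly bounded). It would be worth adding one sentence making this normalisation explicit, since the wording of Proposition~\ref{propSha1} in the paper is slightly imprecise on this point.
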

	\subsection{Existence, uniqueness and Malliavin differentiability}\label{secmaldif}
	
	In this section we discuss existence, uniqueness and Malliavin differentiability of the solution to the SDE \eqref{eq:SDE sumprod}. Since Malliavin calculus will play an important role in our arguments, we briefly introduce the spaces of Malliavin differentiable random variables and stochastic processes
	${\mathcal D}^{1,p}(\mathbb{R}^l)$ and ${\mathcal L}^{1,p}_a(\mathbb{R}^l)$, $p,l,\ge 1$. For a thorough treatment of the theory of Malliavin calculus we refer to \cite{Nua06}.
	Let ${\mathcal M}$ be the class of smooth random variables $\xi=(\xi^1,\dots,\xi^l)$ of the form
	\begin{equation*}
		\xi^i = \varphi^i\big(\int_0^T h^{i1}_s\diffns B_s, \dots, \int_0^T h^{in}_s\diffns B_s \big),
	\end{equation*}
	where $\varphi^i$ is in the space $C_{\text{poly}}^\infty(\mathbb{R}^{n};\mathbb{R})$  of infinitely continuously differentiable functions whose partial derivatives have polynomial growth, $h^{i1}, \dots, h^{in} \in L^2([0,T]; \mathbb{R}^d)$ and $n\ge 1$.
	For every $\xi$ in ${\mathcal M}$ let the operator $D = (D^1, \dots, D^d):{\mathcal M}\to L^2(\Omega\times [0,T];\mathbb{R}^d)$ be  given by
	\begin{equation*}
		D_t\xi^i := \sum_{j=1}^n\frac{\partial \varphi^i}{\partial x_{j}}\big(\int_0^T h^{i1}_s\diffns B_s, \dots, \int_0^T h^{in}_s\diffns B_s \big)h^{ij}_t, \quad 0\le t\le T,\,\, 1\le i\le l,
	\end{equation*}
	and the norm $	\norm{\xi}_{1,p} := \big(\E[\abs{\xi}^p + \int_0^T\abs{D_t\xi}^p\diffns t  ] \big)^{1/p}$.
	As shown in \cite{Nua06}, the operator $D$ extends to the closure ${\mathcal D}^{1,p}(\mathbb{R}^l)$ of the set ${\mathcal M}$ with respect to the norm $\norm{\cdot}_{1,p}$.
	A random variable $\xi$ is Malliavin differentiable if $\xi \in {\mathcal D}^{1,p}(\mathbb{R}^l)$ and we denote by $D_t\xi$ its Malliavin derivative.
	Denote by ${\mathcal L}^{1,p}_a(\mathbb{R}^{l})$ the space of processes $Y \in {\mathcal H}^2(\mathbb{R}^{l})$ such that
	$Y_t \in {\mathcal D}^{1,p}(\mathbb{R}^{l})$ for all $t \in [0,T]$, the process $DY_t$ admits a square integrable progressively measurable version and
	\begin{equation*}
		\norm{Y}_{{\mathcal L}^{1,p}_a(\mathbb{R}^l)}^p := \norm{Y}_{{\mathcal H}^p(\mathbb{R}^l)} + \E\big[\int_0^T\int_0^T\abs{D_r Y_t}^p\diffns r\diffns t \big] < \infty.
	\end{equation*}
	
	The main result of this subsection is the following:
	\begin{thm}
		\label{thm:sumprod}
		Suppose Assumption \ref{mainassum} is in force. Then the SDE \eqref{eq:SDE sumprod} admits a unique strong Malliavin differentiable solution. 
	\end{thm}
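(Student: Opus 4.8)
The plan is to follow the Malliavin-calculus and compactness strategy developed for SDEs with singular drift (in the spirit of \cite{MenTan19}), the genuinely new ingredient being the treatment of the bounded-variation factor $b_2$ through an occupation-time estimate. I would first regularise the two irregular coefficients by spatial mollification, $b_1^n := b_1 * \rho_n$ and $b_2^n := b_2 * \rho_n$, leaving $b_3$ untouched, so that $b_1^n,b_2^n \in C^\infty$ are bounded with $\|b_1^n\|_\infty\le\|b_1\|_\infty$, $\|b_2^n\|_\infty\le\|b_2\|_\infty$, $\int_{\mathbb R}|\partial_x b_2^n(y)|\,dy\le V(b_2)$ (the total variation of $b_2$), and $b_1^n\to b_1$, $b_2^n\to b_2$ Lebesgue--a.e. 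Putting $b^n(t,\omega,x):=b_1^n(t,x)+b_2^n(x)b_3(t,\omega)$, which is bounded, adapted, and smooth in $x$ with bounded spatial derivatives, the SDE $X^n_t=x+\int_0^t b^n(u,\omega,X^n_u)\,du+\sigma B_t$ has a unique strong solution $X^n$, and $X^n_t\in\bigcap_{p\ge1}{\mathcal D}^{1,p}(\mathbb R)$ with $Y^{n,j}_{s,t}:=D^j_sX^n_t$ (for $s\le t$, and $0$ otherwise) solving
\[
Y^{n,j}_{s,t}=\sigma_j+\int_s^t\!\big(\partial_x b_1^n(u,X^n_u)+\partial_x b_2^n(X^n_u)\,b_3(u,\omega)\big)Y^{n,j}_{s,u}\,du+\int_s^t b_2^n(X^n_u)\,D^j_sb_3(u,\omega)\,du.
\]

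The heart of the matter is a family of a priori bounds that are \emph{uniform in $n$}. Grönwall's lemma gives, pathwise,
\[
|Y^{n,j}_{s,t}|\le\Big(|\sigma_j|+T\|b_2\|_\infty\sup_{u\le T}|D^j_sb_3(u)|\Big)\exp\!\Big(\int_0^T|\partial_x b_1^n(u,X^n_u)|\,du+\sup_{u\le T}|b_3(u)|\int_0^T|\partial_x b_2^n(X^n_u)|\,du\Big).
\]
Since $b^n$ is bounded, Girsanov's theorem represents $X^n$ as $x+\sigma B$ under an equivalent probability whose density has finite moments of every order; under that probability, $\int_0^T|\partial_x b_1^n(u,x+\sigma B_u)|\,du$ has exponential moments of all orders bounded uniformly in $n$ by Proposition \ref{propSha1} (the constant there depending only on $\|b_1\|_\infty$), while the occupation-times formula gives $\int_0^T|\partial_x b_2^n(X^n_u)|\,du=\int_{\mathbb R}|\partial_x b_2^n(y)|\,L^y_T\,dy\le V(b_2)\,\sup_y L^y_T$, and the Brownian local time $\sup_y L^y_T$ has good exponential integrability. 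Combining these with Hölder's inequality and the hypothesis $\E[e^{qM^2}]<\infty$ for every $q>0$ (which absorbs the $\sup|b_3|$ and $\sup|D_sb_3|$ factors), one obtains $\sup_n\sup_{0\le s,t\le T}\E[|D^j_sX^n_t|^p]<\infty$ and $\sup_n\E[\sup_{t\le T}|X^n_t|^p]<\infty$ for all $p$; in particular $\{X^n_t\}_n$ is bounded in ${\mathcal D}^{1,p}(\mathbb R)$, uniformly in $t$.

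Next I would establish the equicontinuity needed for compactness: estimating $D^j_sX^n_t-D^j_{s'}X^n_t$ from the same linear equation, the only term carrying a genuine $(s,s')$-modulus is $\int b_2^n(X^n_u)(D^j_sb_3(u)-D^j_{s'}b_3(u))\,du$, which is controlled by Assumption (AX2), the contributions of the moving lower endpoint being $O(|s-s'|)$ in every $L^p$ after reusing the exponential estimates above; Grönwall then yields $\sup_n\E\big[\int_0^T\!\int_0^T|s-s'|^{-1-\gamma}|D_sX^n_t-D_{s'}X^n_t|^2\,ds\,ds'\big]<\infty$ for a suitable $\gamma\in(0,1)$. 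Together with the ${\mathcal D}^{1,2}$-bound, this is precisely the hypothesis of the compactness criterion for Wiener functionals (Da Prato--Malliavin--Nualart; see \cite{Nua06}), so for each $t$ the family $\{X^n_t\}_n$ is relatively compact in $L^2(\Omega)$; extracting a diagonal subsequence, $X^n_t\to X_t$ in $L^2(\Omega)$ for all rational $t$, and the uniform $\mathcal S^p$-bound together with the time-regularity of the $X^n$ upgrades this to a continuous adapted process $X$.

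It remains to identify $X$ as a solution and to prove uniqueness. The delicate point is the passage $\int_0^t b^n(u,\omega,X^n_u)\,du\to\int_0^t b(u,\omega,X_u)\,du$ in $L^1(\Omega)$: writing $b^n(u,X^n_u)-b(u,X_u)=(b^n-b)(u,X^n_u)+\big(b(u,X^n_u)-b(u,X_u)\big)$, the first term vanishes because $b^n\to b$ a.e., the $X^n$ are uniformly bounded, and --- crucially --- under the Girsanov measure the occupation measure of $X^n$ has an $L^2$ density and so does not charge the Lebesgue-null discontinuity sets of $b_1,b_2$; the second term vanishes from $X^n\to X$ in $L^2$ together with the same occupation-density argument applied to the bounded measurable $b$. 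Malliavin differentiability of $X_t$ follows from the closedness of $D$: $\{X^n_t\}$ is bounded in ${\mathcal D}^{1,2}$ and converges in $L^2$, hence $X_t\in{\mathcal D}^{1,2}$. Finally, boundedness of $b$ gives uniqueness in law by Girsanov, and pathwise uniqueness follows from a Zvonkin-type transformation carried out for each fixed $\omega$ --- for a.e.\ $\omega$ the map $(t,x)\mapsto b(t,\omega,x)$ is bounded and Borel, so Veretennikov/Krylov--Röckner-type uniqueness applies --- the joint measurability of the transformation being handled by the adaptedness of $b_3$. I expect the hardest points to be the uniform-in-$n$ exponential control of $\int_0^T|\partial_x b_2^n(X^n_u)|\,du$ via Brownian local time, and the measurability/transformation bookkeeping in the pathwise-uniqueness step.
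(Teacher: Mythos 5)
The overall strategy — mollify the two irregular coefficients, establish uniform Malliavin estimates on the approximating strong solutions, invoke the Da Prato--Malliavin--Nualart compactness criterion, identify the limit, and prove uniqueness — is the same as the paper's (Lemma \ref{lem:weak existence} through Proposition \ref{pro:uniqueness}), and your occupation-time control of the $b_2$-term, $\int_0^T|\partial_x b_2^n(X^n_u)|\,\diffns u = \int_{\mathbb R}|\partial_x b_2^n(y)|L^y_T\,\diffns y \le V(b_2)\,\sup_y L^y_T$, is a legitimate alternative to the paper's device of writing $b_{2,n}$ as a difference of two increasing functions and applying Proposition \ref{propSha1} to each monotone piece (see the proof of Lemma \ref{lem:auxlemma}). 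There are, however, two concrete gaps.

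The first is in the a priori bound on $D_s X^n_t$. Applying Gr\"onwall's lemma after taking absolute values produces the factor $\exp\big(\int_0^T|\partial_x b_1^n(u,X^n_u)|\,\diffns u\big)$ with the absolute value \emph{inside} the time integral, and you then invoke Proposition \ref{propSha1} to control it. But Proposition \ref{propSha1} bounds $\E\exp\big(k\,\big|\int_0^1\partial_x b(t,B_t)\,\diffns t\big|^2\big)$ in terms of $\|b\|_\infty$ for the \emph{signed} integral; no analogous bound can hold for $\int_0^1|\partial_x b^n(B_t)|\,\diffns t$ uniformly in $\|b^n\|_\infty$ (take $b^n(x)=\sin(nx)$: $\|b^n\|_\infty=1$ yet $\int_0^1|\partial_x b^n(B_t)|\,\diffns t\sim n$). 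The occupation-time argument does not help here either, since (AX1) imposes no bound on the variation of $b_1$. The paper avoids this by writing out the variation-of-constants formula \eqref{MalliavinDerivativeEquationr} for the linear Malliavin equation, so that the exponent is the \emph{signed} integral $\int_t^s\{\partial_x b_{1,n}(u,X^n_u)+b'_{2,n}(X^n_u)b_3(u,\omega)\}\,\diffns u$; Girsanov, Young's inequality and Proposition \ref{propSha1} then yield the uniform moment bounds of Lemma \ref{lem:auxlemma} and Proposition \ref{lemmainres1r}. You need this replacement of the Gr\"onwall step.

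The second gap is in the pathwise-uniqueness argument. You propose to fix $\omega$ and apply Zvonkin/Veretennikov to the Borel drift $(t,x)\mapsto b(t,\omega,x)$. For a fixed $\omega$, however, the equation is a \emph{deterministic} ODE forced by the single path $t\mapsto B_t(\omega)$, and the relevant path-by-path well-posedness results (Davie, Shaposhnikov) hold for a \emph{fixed deterministic} drift and almost every Brownian path; the exceptional null set depends on the drift and cannot be made uniform over the random family $\{b(\cdot,\omega,\cdot):\omega\in\Omega\}$. The argument also discards the adaptedness constraint on the candidate solutions, which is essential here. The paper's Proposition \ref{pro:uniqueness} instead compares two adapted solutions through Cameron--Martin shifts $\omega\mapsto\omega+\varphi$ with $\varphi\in C^1_b$: weak uniqueness of the shifted SDE gives $\E\big[X^1_t\,\mathcal E(\int\dot\varphi\,\diffns B)\big]=\E\big[X^2_t\,\mathcal E(\int\dot\varphi\,\diffns B)\big]$ for all $\varphi$, and density of the Dol\'eans exponentials in $L^2(\Omega)$ forces $X^1_t=X^2_t$. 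This is the argument you would need to adopt.
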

	
	The above result can be extended to the multidimensional case under an additional commutativity condition on the spatial derivative of the approximating drift (See for example \cite{MBP10}). We formulate this extension in the following remark.
	\begin{remark}
		Let $X^{x,n}_t$ be the strong solution to  \eqref{eq:SDE sumprod} associated with the drift $b_n$, for $n \ge 1$, and let $b_n'(\cdot, X^{x,n}_{\cdot})$ denote the derivative of $b_n$ with respect to the spatial variable. Then $b_n'(\cdot, X^{x,n}_{\cdot})$ is a continuous matrix-valued function. Assume that for all $n \ge 1$ and all $0 \le s \le t \le 1$,
		\[
		b_n'(t, X^{x,n}_{t})
		\big( \int_s^t b_n'(u, X^{x,n}_{u})\,\mathrm{d}u \big)
		=
		\big( \int_s^t b_n'(u, X^{x,n}_{u})\,\mathrm{d}u \big)
		b_n'(t, X^{x,n}_{t}).
		\]
		Under this commutativity condition, the Malliavin derivative admits an explicit representation. Consequently, by arguments similar to those in \cite{MBP10}, one can establish the existence and uniqueness of a strong solution.
	\end{remark}

	\begin{proof}[Idea of the Proof]
		The proof of the above result proceeds in three main steps:
		
		\begin{enumerate}
			\item \textsl{Existence of a weak solution.}  
			We first establish that the given stochastic differential equation admits a weak solution.
			
			\item \textsl{Approximation by strong solutions.}  
			Let $(X^{x,n}_{t})_{n \in \mathbb{N}}$ be a sequence of strong solutions to the approximating equations \eqref{eq:SDE sumprod}, where the drift coefficients 
			$b_n= b_{1,n}+b_{2,n}b_3$ replace $b=b_1+b_2b_3$, respectively. More precisely, for each $n \ge 1$, define
			\[
			b_n := b_{1,n} + b_{2,n} \, b_3,
			\]
			where $\{b_{1,n}\}_{n \ge 1}$ and $\{b_{2,n}\}_{n \ge 1}$ are sequences of smooth functions with compact support such that $\{b_{2,n}\}_{n \ge 1}\in \text{BV}(\mathbb R)$ and 
			\[
			b_{1,n}(t,x) \to b_1(t,x), \quad (t,x)\text{-a.e. as } n \to \infty, 
			\quad \text{and } \sup_{n \ge 1} \|b_{1,n}\|_{\infty} < \infty,
			\]
			and
			\[
			b_{2,n}(x) \to b_2(x), \quad (t,x)\text{-a.e. as } n \to \infty, 
			\quad \text{and } \sup_{n \ge 1} \|b_{2,n}\|_{\infty} < \infty.
			\]
			In this step, we show that the sequence \((X^{x,n}_{t})_{n \in \mathbb{N}}\) converges strongly in \(L^2(\Omega,\mathbb{P})\) to \(\E[X^x_{t}|\mathcal{F}_t]\).
			
			\item \textsl{Identification and uniqueness.}  
			We then prove that \(\E[X^x_{t}|\mathcal{F}_t] = X^x_t\), implying that \(X^x\) is indeed a strong solution.  
			Finally, we establish pathwise uniqueness of this solution.
		\end{enumerate}
		A complete proof of this result is provided in Subsection \ref{proofteomaldif}.
	\end{proof}
	
		Note that the existence of $X^{x,n}$ is ensured by \cite{HLN97}.
	By applying the chain rule formula (see, e.g., \cite{Nua06}) and using the Malliavin differentiability of $b_2$ we deduce that $X^{x,n}$ is also Malliavin differentiable, and we have for $i=1,\ldots,d,$
	\begin{align}\label{malldifeqr}
		D^i_tX^{x,n}_s =
		& \sigma+ \int_t^sb_{2,n}(X^{x,n}_u) D^i_tb_3(u,\omega)\diffns u\notag\\
		&+ \int_t^s\big\{\partial_xb_{1,n}(u,X^{x,n}_u)+ b^\prime_{2,n}(X^{x,n}_u) b_3(u,\omega)\big\}D^i_t X^{x,n}_u \diffns u.
	\end{align}

	The following proposition provides a key estimate for the Malliavin derivative of $X^{n,x}$ and plays an important role in the proof of Theorem \ref{thm:sumprod}.

	\begin{prop} \label{lemmainres1r}
		The strong solution $X^{x,n}$ of the SDE \eqref{eq:SDE sumprod} with $b_n$ replacing $b$ satisfies
		$$
		\E \left[ | D_t X^{x,n}_s - D_{t'} X^{x,n}_s |^2 \right] \leq {\mathcal C}(\|b\|_{\infty}) |t -t'|^{\alpha}
		$$ 
		for all $s\in [0,T]$; $0 \leq t' \leq t \leq T$ and some $\alpha = \alpha(s) > 0$.
		Moreover, 
		$$
		\sup_{0 \leq t \leq T} \E \left[ | D_t X^{x,n}_s |^2 \right] \leq {\mathcal C}(\|b\|_{\infty}),
		$$
		where the function ${\mathcal C}(\cdot, \cdot, \cdot): [0, \infty)^3 \rightarrow [0, \infty)$ is continuous and increasing in each components 
		and $\|b_1\|_{\infty}$
	\end{prop}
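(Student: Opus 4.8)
The plan is to analyze the linear SDE \eqref{malldifeqr} satisfied by the Malliavin derivative $D^i_tX^n_s$ by first solving it explicitly via an integrating factor, and then estimating the resulting stochastic exponential together with the driving integral term. Write $a_n(u) := \partial_x b_{1,n}(u,X^n_u) + b'_{2,n}(X^n_u) b_3(u,\omega)$ and $c^i_n(u) := b_{2,n}(X^n_u) D^i_t b_3(u,\omega)$, so that \eqref{malldifeqr} reads $D^i_t X^n_s = \sigma + \int_t^s c^i_n(u)\,\diffns u + \int_t^s a_n(u) D^i_t X^n_u\,\diffns u$. Solving the linear ODE (pathwise, for fixed $\omega$) gives the variation-of-constants formula
\begin{equation*}
D^i_t X^n_s = \Big(\sigma + \int_t^s c^i_n(u)\, e^{-\int_t^u a_n(r)\,\diffns r}\,\diffns u\Big) \, e^{\int_t^s a_n(r)\,\diffns r}.
\end{equation*}
The first key step is the \emph{uniform exponential moment bound}: I would show that $\E\big[\exp\big(q\int_0^T a_n(r)\,\diffns r\big)\big]$ is bounded uniformly in $n$ for every $q>0$. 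For the term $\int_0^T b'_{2,n}(X^n_u) b_3(u,\omega)\,\diffns u$ one uses boundedness of $b_3$ by $M(\omega)$ together with Proposition \ref{propSha1} (after a Girsanov change of measure that removes the drift of $X^n$, turning $X^n$ into $x + \sigma B$ up to an equivalent measure whose density has all moments by the boundedness of $b$); the term $\int_0^T \partial_x b_{1,n}(u,X^n_u)\,\diffns u$ is handled the same way. This, with Cauchy--Schwarz and the $\exp(qM^2)$ integrability hypothesis in (AX1), yields $\sup_n\E[|D_tX^n_s|^2]\le \mathcal C(\|b\|_\infty)$, which is the second (easier) assertion.

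For the Hölder-in-$(t,t')$ estimate, I would subtract the two integral equations for $D^i_t X^n_s$ and $D^i_{t'} X^n_s$. Using the representation above (or working directly with Grönwall on the difference), the discrepancy $D^i_t X^n_s - D^i_{t'}X^n_s$ splits into three contributions: (i) the mismatch in the initial layer $\int_{t'}^{t}(\cdots)\,\diffns u$, which is $O(|t-t'|)$ thanks to the uniform $L^2$-bound just established; (ii) the term coming from $\int_{t'}^{s} c^i_{t}(u) - c^i_{t'}(u)\,\diffns u$, where $c^i_t(u) - c^i_{t'}(u) = b_{2,n}(X^n_u)\big(D^i_t b_3(u,\omega) - D^i_{t'} b_3(u,\omega)\big)$, which is controlled by Assumption (AX2) giving $\E[|D_tb_3(s,\cdot) - D_{t'}b_3(s,\cdot)|^4]\le C|t-t'|^\theta$, hence after Hölder a bound of order $|t-t'|^{\theta/4}$; and (iii) a Grönwall-type feedback term $\int_{t'}^s a_n(u)\,(D^i_t X^n_u - D^i_{t'}X^n_u)\,\diffns u$. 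Applying the stochastic Grönwall inequality (or, equivalently, absorbing this term using the exponential-moment bound from the previous step together with Cauchy--Schwarz to split the $L^2$ norm of the difference from the exponential factor) closes the estimate with $\alpha(s) = \theta/4$ (and possibly a smaller exponent near $s$ small, which is why $\alpha$ is allowed to depend on $s$).

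The main obstacle is step (iii) combined with the exponential moments: because the coefficient $a_n$ is merely bounded-measurable in the $b_1$ part and only $b'_{2,n}$ (a measure in the limit) in the $b_2$ part, one cannot apply a deterministic Grönwall lemma with a uniform constant — the factor $e^{\int a_n}$ genuinely depends on $\omega$ and is only controlled in exponential-moment sense. The care needed is to keep every application of Grönwall/variation-of-constants inside an $L^2(\Omega)$ (or $L^p$) estimate, systematically using Cauchy--Schwarz to peel off $\E[\exp(q\int_0^T a_n)]^{1/2}$ (uniformly bounded in $n$ by Proposition \ref{propSha1} and Girsanov) from the genuinely small factor of order $|t-t'|^{\theta/4}$; uniformity in $n$ throughout is what makes the constant $\mathcal C(\|b\|_\infty)$ independent of the approximation index. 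A minor additional point is that the smoothed coefficients $b_{1,n}, b_{2,n}$ must be chosen so that $\|b_n\|_\infty \le \|b\|_\infty$ (standard mollification) so that the constant in Proposition \ref{propSha1} does not blow up along the approximation.
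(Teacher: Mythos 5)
Your proposal is essentially the paper's argument: both start from the variation-of-constants (integrating-factor) representation of the linear Malliavin-derivative equation \eqref{malldifeqr}, establish uniform exponential moments of $\int a_n$ via Girsanov plus Proposition~\ref{propSha1}, and then decompose the difference $D_tX^n_s - D_{t'}X^n_s$ and estimate each piece by H\"older/Cauchy--Schwarz, using (AX2) for the term involving $D_tb_3-D_{t'}b_3$. The paper carries this out directly on the explicit formula \eqref{MalliavinDerivativeEquationr}, splitting into three terms $I_1$ (exponential-layer mismatch $\sigma\,e^{\int_t^s a_n}(e^{\int_{t'}^t a_n}-1)$, bounded via Lemma~\ref{lem:auxlemma}), $I_2$ (source integral over $[t',t]$), and $I_3$ (the $D_tb_3-D_{t'}b_3$ term over $[t,s]$).

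One cautionary point in your exposition: you alternate between the explicit-formula route and a ``Gr\"onwall on the difference'' route. In the explicit-formula route there is no residual feedback term, and the ``initial layer over $[t',t]$'' that you dismiss as $O(|t-t'|)$ must \emph{not} include the $a_n\,D_{t'}X^n$ contribution --- that piece is packaged as $e^{\int_{t'}^t a_n}-1$ and controlled only in $L^p(\Omega)$ via the Girsanov/Shaposhnikov bound (this is precisely what Lemma~\ref{lem:auxlemma} provides), not deterministically. If instead you stay with the raw integral equations, the term $\int_{t'}^t a_n(u)\,D_{t'}X^n_u\,\diffns u$ cannot be bounded uniformly in $n$ in $L^2$ since $b'_{2,n}$ blows up, so a naive Gr\"onwall will not close; you would have to recombine things into exponentials anyway. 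Your own remark about the ``obstacle in step (iii)'' shows you see the issue, but the clean resolution is to commit to the explicit formula, exactly as the paper does.
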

	The proof of the above proposition will need the following lemma:
	\begin{lemm}
		\label{lem:auxlemma}
		Let $c\ge1$ be a fixed constant.
		Then, it holds
		\begin{equation}
			\E\Big[\Big\{\exp\Big(
			c\int_{t'}^t\big\{\partial_xb_{1,n}(u,X^{x,n}_u)+b^\prime_{2,n}(X^{x,n}_u)\cdot b_3(u,\omega)\big\}\diffns u\Big)-1\Big\}^p\Big]\le  {\mathcal C}(p,\|b\|_{\infty})|t-t'|^{p/2}.
		\end{equation}
	\end{lemm}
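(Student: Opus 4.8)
The plan is to reduce everything to the exponential moment bound of Proposition \ref{propSha1} via Hölder's inequality and the elementary estimate $|e^x - 1|^p \le |x|^p e^{p|x|}$ (or, more conveniently, $|e^x-1| \le |x| e^{|x|}$ applied with care near $x=0$). First I would write
$$
A_n := c\int_{t'}^t\big\{\partial_xb_{1,n}(u,X^n_u)+b^\prime_{2,n}(X^n_u)\cdot b_3(u,\omega)\big\}\diffns u,
$$
and use $|e^{A_n}-1|^p \le |A_n|^p e^{p|A_n|}$, followed by Cauchy–Schwarz (or Hölder with a large exponent) to split $\E[|A_n|^p e^{p|A_n|}] \le \E[|A_n|^{2p}]^{1/2}\,\E[e^{2p|A_n|}]^{1/2}$. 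The exponential factor is to be controlled by Proposition \ref{propSha1} after a change of time rescaling $[t',t]$ to $[0,1]$ and a Girsanov transformation that turns $X^n$ into a Brownian motion (with drift bounded by $\|b\|_\infty$, so the Radon–Nikodym density has all moments, uniformly in $n$); this is where the uniform boundedness of $b_{1,n}, b_{2,n}', b_3$ enters, and where one absorbs the $b_3(u,\omega)$ factor by first conditioning or by invoking the $\E[e^{qM^2}]<\infty$ hypothesis in (AX1).

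Next I would handle the polynomial factor $\E[|A_n|^{2p}]$, which is where the gain of the power $|t-t'|^{p/2}$ comes from. Here I split $A_n$ into the $b_1$-part and the $b_2'$-part. For the $b_2'$-part, since $b_3$ is bounded by $M(\omega)$ and $b_{2,n}'$ integrates against the occupation measure of $X^n$, one can write $\int_{t'}^t b_{2,n}'(X^n_u)\,\diffns u$ as an integral against local time and bound it by $\|b_{2,n}'\|_{TV}\cdot \sup_a |L^a_t(X^n) - L^a_{t'}(X^n)|$, whose $L^{2p}$-norm is $O(|t-t'|^{1/2})$ uniformly in $n$ by standard local-time estimates (again using Girsanov to pass to Brownian local time, whose increments in time are $1/2$-Hölder in $L^q$ for every $q$). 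For the $b_1$-part, $\partial_x b_{1,n}$ is not bounded uniformly in $n$, so one cannot estimate it pointwise; instead I would integrate by parts in space (writing $\int_{t'}^t \partial_x b_{1,n}(u,X^n_u)\,\diffns u = \int_{\R} b_{1,n}'(u,a)\,\big(L^a_t - L^a_{t'}\big)\,\diffns a$ formally, i.e. use the local-time representation $\int f'(X_u)\,\diffns u = \int f'' * L$, or more cleanly use the Meyer–Tanaka / Itô–Tanaka formula) to move the derivative off $b_{1,n}$ and onto the local time, so that only $\|b_{1,n}\|_\infty \le \|b_1\|_\infty$ survives together with $\sup_a|L^a_t - L^a_{t'}|$, which again contributes the $|t-t'|^{1/2}$. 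Raising to the power $p$ then gives $|t-t'|^{p/2}$.

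The main obstacle, as in much of this circle of ideas, is the $b_1$-term: $\partial_x b_{1,n}$ is genuinely singular in the limit, so every estimate on it must be done after an integration by parts (or Itô–Tanaka) that trades the spatial derivative for a local-time increment, and one must ensure all constants depend only on $\|b\|_\infty$ (equivalently $\|b_1\|_\infty$, $\|b_2\|_{TV}$, the bound on $b_3$) and not on $n$. A technically delicate point is that the local-time argument and the Girsanov argument must be run simultaneously — one changes measure once, controls the density's exponential moments uniformly in $n$, and then works under the Wiener measure where Brownian local-time estimates are classical; keeping the two Hölder exponents compatible (so that both the polynomial $|t-t'|^{p/2}$ factor and the finite exponential-moment factor come out) is the bookkeeping heart of the argument. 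I would expect the resulting constant to be packaged exactly as the continuous, componentwise-increasing function $\mathcal C(p,\|b\|_\infty)$ claimed in the statement.
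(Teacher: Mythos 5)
Your proposal is correct and follows essentially the same route as the paper's proof: the mean-value/exponential bound $|e^{A_n}-1|\le|A_n|\,(e^{A_n}+1)$, a H\"older split between a polynomial factor (giving the $|t-t'|^{p/2}$ gain) and an exponential-moment factor controlled after Girsanov by Proposition~\ref{propSha1} together with the $\E[e^{qM^2}]<\infty$ hypothesis. The only cosmetic difference is that where you propose a self-contained It\^o--Tanaka/local-time integration-by-parts to handle the $\partial_x b_{1,n}$ polynomial term, the paper delegates exactly this step to \cite[Proposition~3.7]{MMNPZ13} (and again Proposition~\ref{propSha1}), which is the same heat-kernel/occupation-density mechanism packaged as a citation.
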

	\begin{proof}
		See Appendix \ref{profauxlem}.
	\end{proof}
	\begin{proof}[Proof Proposition \ref{lemmainres1r}]
		See Subsection \ref{proofteomaldif}.
	\end{proof}
	The main consequence of this proposition is the following compactness result:
	\begin{cor} \label{maincor}
		For each $0 \leq t \leq T$, the sequence $(X^{x,n}_{t})_{n \geq 1}$ is relatively compact in $L^2(\Omega, \mathcal{F}, \PP)$.
	\end{cor}
	\begin{proof}
		This follows by Proposition \ref{lemmainres1r} and the compactness criteria of \cite{DPMN92}.
	\end{proof}
	
	The next Theorem gives the explicit representation of the Malliavin derivative of the solution to the SDE.
	\begin{thm}\label{Thmexplimalder}
		Assume that the conditions of Theorem \ref{thm:sumprod} are satisfied. Suppose that
		\begin{equation}
			\label{eq: power moment b21}
			\sup_{0\le s\le T}\E\big[\big(\int_0^T|D^i_sb_3(t,\omega+\varphi) |^2\diffns t \big)^2 \big]<\infty,\quad i=1,\ldots,d
		\end{equation}
		for every $\varphi \in C^1_b([0,T],\mathbb{R}^d)$. 
		For every $0\leq s\leq t\leq T$, the $i$-th component of the Malliavin derivative of the unique strong solution to the SDE \eqref{eq:SDE sumprod} admits the following representation:
		\begin{align}\label{ExpliciMalder}
			D^i_tX_s^x=& e^{-\int_t^s\int_{\mathbb{R}} b(u,z,\omega) L^{X^x}(\diffns u,\diffns z)}\notag\\
			&\times\Big(\int_t^sb_{2}(X_u)\cdot D^i_tb_3(u,\omega)e^{-\int_t^u \int_{\mathbb{R}}b(r,z,\omega) L^{X^x}(\diffns r,\diffns z)}\diffns u + \sigma_i \Big),\, i =1,\dots,d.
		\end{align}

	\end{thm}
	\begin{proof}
		See Appendix \ref{proofthmexplmal}.
	\end{proof}
	\subsection{Sobolev differentiability}\label{secsobdif}
	Let $X^{s,x}_\cdot$ be the solution to the SDE \eqref{eq:sde into} with initial condition $X_s=x$.
	As is well-known, the solution $X^{s,x}$ may not belong to the Sobolev space $W^{1,p}(\mathbb{R}, dx)$, $p>1$.
	Thus, following the intuition of \cite{MNP2015} we will show that $X^{s,x}$ belongs to a weighted Sobolev space. Let $w:\mathbb{R}\to (0,\infty)$ be a Borel-measurable (weight) function such that
	\begin{equation*}
		\int_\mathbb{R}e^{c|x|^2}w(x)\diffns x<\infty
	\end{equation*}
	for every $c\ge0$. Let $W^{1,p}(\mathbb{R}, w)$ be the weighted Sobolev space of functions $u:\mathbb{R}\to\mathbb{R}$ such that, it holds
	\begin{equation*}
		||u||_{1, p, w} := \big(\int_\mathbb{R}|u(x)|^pw(x)\diffns x \big)^{1/p} + \big(\int_\mathbb{R}|u'(x)|^pw(x)\diffns x \big)^{1/p}<\infty,
	\end{equation*}
	where $u'$ is the weak derivative of $u$.

	The main result in this section is the following Sobolev differentiability of the solution to \eqref{eq:SDE sumprod}.  
	\begin{thm}
		\label{thm:sumprod1}
		Suppose Assumption \ref{mainassum} is in force. 
		Let $X^{x}$ be the unique solution of \eqref{eq:SDE sumprod}.
		Then for every $p
		\ge2$, the map $x \mapsto X^{x}_t$ belongs to $L^2(\Omega, W^{1,p}(\mathbb{R}, w))$. In addition, we have the following explicit representation for its first variation in the Sobolev sense
		\begin{equation} \label{eqfirstvar1}
			\Phi_{s,t}:=\partial_x X^{s,x}_t  =e^{-\int_s^t\int_{\mathbb{R}} b(u,z,\omega) L^{X^x}(\diffns u,\diffns z)} .
		\end{equation} 
	\end{thm}
	\begin{proof}The proof of the first part is provided in Subsection \ref{proofteosobdif}, whereas the explicit representation \eqref{eqfirstvar1} follows by arguments similar to those used for \eqref{ExpliciMalder}, with the main difference that $\Phi^{x,n}_{s,t}$ converges to $\Phi^{x}_{s,t}$ only weakly in $L^2(U\times\Omega)$.
	\end{proof}
	As in the previous section, we denote $b_n: = b_{1,n} + b_{2,n}b_3$ where $b_{1,n}:[0,T]\times \mathbb{R}\to \mathbb{R}$ (resp. $b_{2,n}:\mathbb{R}\to \mathbb{R}$), is the sequence of smooth functions with compact support approximating $b_1$ (resp. $b_2$) as introduced previously. We further denote by $X^{s,x,n}$ the unique solution of the SDE associated to $b_n$ with initial condition $X^{s,x,n}_s = x$.
	That is,
	\begin{equation}\label{eq:sde into}
		X^{s,x,n}_t = x + \int_s^tb_{n}(u, X^{s,x,n}_u,\omega)\diffns u + \sigma\cdot B_t.
	\end{equation}
	We first derive a uniform (in $n$) bound on the derivative of the solution $X^{s,x,n}$ to the SDE \eqref{eq:sde into} which will play a key role in the proof of Theorem \ref{thm:sumprod1}.
	\begin{prop}
		\label{pro:bound derivative}
		Let $p\ge1$ and $(s,x) \in [0,T]\times \mathbb{R}$ be fixed.
		Then for every $t \in [0,T]$, almost every trajectories of $x \mapsto X^{s,x,n}_t$ is differentiable and it holds
		\begin{align}\label{eqapprosobder1}
			\underset{x\in \mathbb{R}}{\sup}\,\E\left[|\partial_x X^{s,x,n}_t|^p \right] &\le \tilde{C}(p,\sigma,\|b\|_\infty)
		\end{align}
		for some positive constant $\tilde{C}(p,\sigma,\|b\|_\infty)$ depending on $p,T,\sigma$ and $\|b\|_\infty$.
	\end{prop}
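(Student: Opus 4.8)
The plan is to use the smoothness of the approximating coefficient $b_n$ to obtain an explicit exponential formula for the first variation process of $x\mapsto X^{s,x,n}_t$, and then to import the uniform bound \eqref{eqapprosobder1} directly from Lemma~\ref{lem:auxlemma}.

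\emph{Step 1: differentiability of the flow and the variation formula.} For each fixed $n$, $b_{1,n}$ and $b_{2,n}$ are smooth with compact support, hence have bounded derivatives; combined with $|b_3(\cdot,\omega)|\le M(\omega)<\infty$ this shows that for $\PP$-a.e.\ $\omega$ the map $x\mapsto b_n(u,\omega,x)=b_{1,n}(u,x)+b_{2,n}(x)b_3(u,\omega)$ is $C^1$ with derivative bounded and Lipschitz in $x$, uniformly in $u\in[0,T]$. Since $t\mapsto\sigma B_t(\omega)$ is continuous, for a.e.\ fixed $\omega$ equation \eqref{eq:sde into} is an ordinary integral equation with such a forcing term, so classical smooth dependence on initial data gives that $x\mapsto X^{s,x,n}_t(\omega)$ is continuously differentiable for every $t\in[s,T]$ and that $Y^{n}_t:=\partial_x X^{s,x,n}_t$ solves the linear equation
\begin{equation*}
Y^{n}_t = 1 + \int_s^t\partial_x b_n(u,\omega,X^{s,x,n}_u)\,Y^{n}_u\,\diffns u .
\end{equation*}
Since $\partial_x b_n(u,\omega,y)=\partial_x b_{1,n}(u,y)+b_{2,n}'(y)\,b_3(u,\omega)$, this integrates to
\begin{equation*}
Y^{n}_t=\exp\!\Big(\int_s^t\big\{\partial_x b_{1,n}(u,X^{s,x,n}_u)+b_{2,n}'(X^{s,x,n}_u)\,b_3(u,\omega)\big\}\,\diffns u\Big)>0 .
\end{equation*}

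\emph{Step 2: the uniform bound.} Fix $p\ge1$ and set $A_n:=\int_s^t\big\{\partial_x b_{1,n}(u,X^{s,x,n}_u)+b_{2,n}'(X^{s,x,n}_u)b_3(u,\omega)\big\}\diffns u$, so that $|\partial_x X^{s,x,n}_t|^p=(Y^n_t)^p=e^{pA_n}$. Using $e^{a}\le1+|e^{a}-1|$ together with the Cauchy--Schwarz inequality,
\begin{equation*}
\E\big[|\partial_x X^{s,x,n}_t|^p\big]=\E\big[e^{pA_n}\big]\le 1+\E\big[|e^{pA_n}-1|\big]\le 1+\big(\E\big[(e^{pA_n}-1)^2\big]\big)^{1/2}.
\end{equation*}
Now invoke Lemma~\ref{lem:auxlemma} — whose proof applies verbatim with the initial time $0$ replaced by $s$ — with $t'=s$, with the constant $c$ there equal to $p$, and with the power there equal to $2$; this yields $\E[(e^{pA_n}-1)^2]\le{\mathcal C}(p,\|b\|_\infty)\,|t-s|\le{\mathcal C}(p,\|b\|_\infty)\,T$, and — this is the essential point — the right-hand side depends neither on the initial point $x$ nor on $n$. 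Consequently
\begin{equation*}
\sup_{x\in\mathbb R}\E\big[|\partial_x X^{s,x,n}_t|^p\big]\le 1+\big({\mathcal C}(p,\|b\|_\infty)\,T\big)^{1/2}=:\tilde C(p,\sigma,\|\tilde b\|_\infty),
\end{equation*}
uniformly in $t\in[s,T]$ and $n\in\mathbb N$, which is \eqref{eqapprosobder1}.

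\emph{Where the difficulty lies.} All the work is concentrated in Lemma~\ref{lem:auxlemma}: since $b_1$ is merely bounded and $b_2$ only of bounded variation, $\|\partial_x b_n\|_\infty\to\infty$ as $n\to\infty$, so a direct Gronwall estimate on $Y^n$ explodes. The lemma bypasses this by a Girsanov change of measure under which $X^{s,x,n}$ becomes $x+\sigma\tilde B$ for a Brownian motion $\tilde B$ — the corresponding density being controllable uniformly in $n$ because $\|b_n\|_\infty\le\|b_1\|_\infty+\|b_2\|_\infty M(\omega)$ has all exponential moments — followed by the Shaposhnikov estimate of Proposition~\ref{propSha1}, which, after a parabolic rescaling reducing $[s,t]$ to $[0,1]$, controls the relevant exponential moments in terms of $\|b_{1,n}\|_\infty\le\|b_1\|_\infty$ only, independently of $x$ and $n$. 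The factor $b_2$ is treated by writing $b_2=b_2^{(1)}-b_2^{(2)}$ as a difference of bounded non-decreasing functions, so that $|b_{2,n}'|\le(b_{2,n}^{(1)})'+(b_{2,n}^{(2)})'$ with each summand a non-negative spatial derivative of a uniformly bounded function; the random bound $M(\omega)$ on $b_3$ is then peeled off by Young's inequality before Proposition~\ref{propSha1} is applied. I expect this interplay between the random bound $M(\omega)$ on $b_3$ and the Shaposhnikov estimate for the bounded-variation factor $b_2$ to be the main technical hurdle.
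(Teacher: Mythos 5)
Your proof follows the same route as the paper's: derive the exponential representation $\partial_x X^{s,x,n}_t=\exp\big(\int_s^t\{\partial_xb_{1,n}(u,X^{s,x,n}_u)+b_{2,n}'(X^{s,x,n}_u)b_3(u,\omega)\}\diffns u\big)$ of the first variation and then bound its $p$-th moment by Lemma~\ref{lem:auxlemma}, observing that the resulting constant is independent of $x$. The only difference is that you make explicit the elementary step $e^{pA}\le 1+|e^{pA}-1|$ followed by Cauchy--Schwarz, connecting the lemma's bound on $\E[(e^{cA}-1)^2]$ to a bound on $\E[e^{pA}]$, which the paper leaves implicit.
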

	\begin{proof}
		See Subsection \ref{proofteosobdif}.
	\end{proof}
	
	\begin{cor}\label{corholdx}
		Let $p\ge 2$, then it holds
		\begin{equation*}
			\E\left[ |X^{s, x_1}_{t_1} - X^{s, x_2}_{t_2} |^p\right]\le \mathcal{C}_p(|| b||_\infty, T)\big(|t_1 - t_2|^{p/2} + |x_1 - x_2|^{p} \big)
		\end{equation*}
		for every $t_1, t_2, s \in [0,T]$, $x_1, x_2 \in \mathbb{R}$ and for some continuous function $\mathcal{ C}_p$ increasing in each component. In particular, for every $s\in [0,T]$ almost every trajectories\footnote{We use the convention $X^{s,x}_t = x$ whenever $s\le t$.} of $(t,x)\mapsto X^{s,x}_t$ is $\alpha$-H\"older continuous with $\alpha <1/2$ in $s$ and $\alpha <1$ in $x$, locally.
	\end{cor}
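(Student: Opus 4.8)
The plan is to establish Corollary \ref{corholdx} as a consequence of Proposition \ref{pro:bound derivative} together with the standard moment estimates for the solution of \eqref{eq:sde into}. I will first prove the estimate for the smooth approximations $X^{s,x,n}$ with a constant that is uniform in $n$, and then pass to the limit using Proposition \ref{prop:convXn} (and its evident extension providing convergence of $X^{s,x,n}_t$ to $X^{s,x}_t$). For the spatial increment, I would write $X^{s,x_1,n}_t - X^{s,x_2,n}_t = \int_0^1 \partial_x X^{s, x_2 + \lambda(x_1-x_2), n}_t \, (x_1 - x_2)\diff\lambda$, so that by Jensen's inequality and Fubini,
\begin{equation*}
\E\left[|X^{s,x_1,n}_t - X^{s,x_2,n}_t|^p\right] \le |x_1-x_2|^p \int_0^1 \E\left[|\partial_x X^{s, x_2+\lambda(x_1-x_2),n}_t|^p\right]\diff\lambda \le \tilde{C}(p,\sigma,\|\tilde b\|_\infty)\,|x_1-x_2|^p,
\end{equation*}
which is exactly the spatial part with the uniform bound from Proposition \ref{pro:bound derivative}.

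For the time increment, I would use the SDE itself: assuming $t_1 \le t_2$ and the same initial data $(s,x)$ with $s \le t_1$, one has $X^{s,x,n}_{t_2} - X^{s,x,n}_{t_1} = \int_{t_1}^{t_2} b_n(u, X^{s,x,n}_u,\omega)\diff u + \sigma(B_{t_2} - B_{t_1})$. Since $\|b_n\|_\infty \le \|b\|_\infty$ uniformly in $n$ (the mollifications do not increase the sup-norm), the drift term is bounded by $\|b\|_\infty |t_2 - t_1|$ pathwise, and the Brownian increment contributes $|\sigma|^p |t_2-t_1|^{p/2}$ in $L^p$ by the Gaussian moment formula; combining and using $|t_2-t_1|^p \le T^{p/2}|t_2-t_1|^{p/2}$ on $[0,T]$ gives the $|t_1-t_2|^{p/2}$ bound. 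Adding the two estimates and using the triangle inequality $\E[|X^{s,x_1,n}_{t_1} - X^{s,x_2,n}_{t_2}|^p] \le 2^{p-1}(\E[|X^{s,x_1,n}_{t_1}-X^{s,x_1,n}_{t_2}|^p] + \E[|X^{s,x_1,n}_{t_2} - X^{s,x_2,n}_{t_2}|^p])$ yields the claimed inequality for each $n$ with a constant $\mathcal{C}_p(\|b\|_\infty,T)$ independent of $n$. Letting $n\to\infty$ and invoking Fatou's lemma (together with the strong $L^2$ convergence along a subsequence, upgraded to $L^p$ via the uniform moment bounds) transfers the estimate to $X^{s,x}$ itself. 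Finally, the local Hölder continuity of $(t,x)\mapsto X^{s,x}_t$ with the stated exponents follows from the two-parameter Kolmogorov--Chentsov continuity theorem applied to the estimate just obtained, since $p\ge 2$ is arbitrary so the Hölder exponent can be taken arbitrarily close to $1/2$ in $t$ and to $1$ in $x$.

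The main obstacle I anticipate is not in the smooth case — there everything is a direct bookkeeping exercise once Proposition \ref{pro:bound derivative} is in hand — but rather in the limiting step: one must make sure that the convergence $X^{s,x,n}_t \to X^{s,x}_t$ holds with enough uniformity in $(s,x,t)$ to legitimately pass the inequality to the limit for all parameter values simultaneously, or at least for a dense set from which the full statement is recovered by the continuity one is trying to prove. Concretely, I would fix rational $t_1,t_2,s$ and $x_1,x_2$, pass to the limit there via Fatou, obtain the inequality on a countable dense set, and then note that the very estimate so obtained forces (via Kolmogorov) a continuous modification, on which the inequality extends to all real parameters by continuity. A secondary technical point is the passage from $L^2$ to $L^p$ convergence along the subsequence; this is harmless because the uniform bound $\sup_n \E[\sup_{t}|X^{s,x,n}_t|^{p'}] < \infty$ for every $p'$ (a consequence of Girsanov and the boundedness of $b_n$) gives uniform integrability of $|X^{s,x,n}_t - X^{s,x}_t|^p$.
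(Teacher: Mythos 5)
Your proposal is correct and follows essentially the same route as the paper: decompose the increment into a spatial part controlled by $\partial_x X^{s,\cdot,n}$ via Proposition \ref{pro:bound derivative} (the paper writes this as $\int_{x_2}^{x_1}\partial_z X^{z,n}\,\mathrm{d}z$ rather than your $\lambda$-parametrised integral, but it is the same thing after Jensen/Fubini), plus a temporal part handled directly from the SDE using $\|b_n\|_\infty\le\|b\|_\infty$ and Gaussian moments, and then pass to the limit in $n$. The only nontrivial difference is in the limiting step: the paper invokes weak $L^2$ convergence (Lemma \ref{lem:weak conv weak sol}, Theorem \ref{thm:adapted}) together with convexity and weak lower-semicontinuity of $K\mapsto\E[|K|^p]$, while you go via Fatou along a strongly convergent subsequence plus a uniform-integrability upgrade from $L^2$ to $L^p$; both are legitimate, the paper's version is slightly leaner since it avoids the need to upgrade the mode of convergence.
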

	\begin{proof}
		See Subsection \ref{proofteosobdif}.
	\end{proof}

	\begin{prop}\label{theosup1}
		Let $X^{x}$ be the strong solution to the SDE \eqref{eqconpb1}. Then there exists a nondecreasing function $C$ such that the following inequality holds:
		\begin{align}\label{eqsup1}
			\mathbb{E}\big[\sup_{t\in [0,T]}|X^x_t-X^y_t|^p\big]\leq C(\|b\|_\infty,p)|x-y|^p.
		\end{align}
	\end{prop}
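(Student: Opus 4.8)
\textbf{Proof proposal for Proposition \ref{theosup1}.}

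The plan is to pass to the approximating SDEs \eqref{eq:sde into} (with $s=0$), establish the estimate \eqref{eqsup1} uniformly in $n$ for $X^{x,n}$ and $X^{y,n}$, and then take the limit using the strong $L^2$-convergence of $X^{\cdot,n}_t$ (Proposition \ref{prop:convXn}, together with the adaptedness from Theorem \ref{thm:adapted}, so that the limit is the genuine solution) and Fatou's lemma in a suitable form. So the heart of the matter is a uniform-in-$n$ pathwise bound, which is where the Garcia--Rodemich--Rumsey (GRR) machinery advertised in the introduction must enter, since $\E[\sup_t|\cdot|^p]$ cannot be extracted from the pointwise-in-$t$ moment bound of Corollary \ref{corholdx} directly without a continuity argument.

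First I would write, for fixed $x,y$ and each $n$, the difference $\Delta^n_t := X^{x,n}_t - X^{y,n}_t$, which by \eqref{eq:sde into} (the Brownian parts cancel) solves a random ODE
\begin{equation*}
\Delta^n_t = (x-y) + \int_0^t\big(b_n(u,X^{x,n}_u,\omega) - b_n(u,X^{y,n}_u,\omega)\big)\diffns u .
\end{equation*}
Since $b_n = b_{1,n} + b_{2,n}b_3$ is smooth, the mean value theorem gives $\Delta^n_t = (x-y) + \int_0^t \gamma^n_u \Delta^n_u\diffns u$ with $\gamma^n_u := \partial_x b_{1,n}(u,\xi^n_u) + b'_{2,n}(\eta^n_u)b_3(u,\omega)$ for intermediate points $\xi^n_u,\eta^n_u$, whence $\Delta^n_t = (x-y)\exp\big(\int_0^t\gamma^n_u\diffns u\big)$. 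Therefore
\begin{equation*}
\E\big[\sup_{t\in[0,T]}|\Delta^n_t|^p\big] \le |x-y|^p\,\E\Big[\exp\Big(p\int_0^T|\gamma^n_u|\diffns u\Big)\Big],
\end{equation*}
and it remains to bound the exponential moment on the right uniformly in $n$. This is exactly the type of quantity controlled in Lemma \ref{lem:auxlemma} and, at bottom, by the Shaposhnikov estimate of Proposition \ref{propSha1}: one performs a Girsanov change of measure to remove the drift of $X^{x,n}$ (the Girsanov density has exponential moments of all orders because $\|b\|_\infty<\infty$, via Novikov), reducing $\int_0^T\partial_x b_{1,n}(u,X^{x,n}_u)\diffns u$ to $\int_0^T\partial_x b_{1,n}(u,\sigma B_u + x)\diffns u$, which has a uniform-in-$n$ exponential moment by Proposition \ref{propSha1} since $\|b_{1,n}\|_\infty \le \|b_1\|_\infty$; the term $\int_0^T|b'_{2,n}(\eta^n_u)b_3(u,\omega)|\diffns u$ is handled similarly, using that $b_3$ is bounded by $M(\omega)$ with $\E[e^{qM^2}]<\infty$ and that $\|b'_{2,n}\|_{L^1}\le \mathrm{Var}(b_2)$ after the change of variables along the (now explicit) trajectory — here a Cauchy--Schwarz split between the $e^{qM^2}$ factor and the Shaposhnikov-type factor, followed by Hölder with respect to the Girsanov density, produces the required bound depending only on $\|b\|_\infty$ (and $p,T,\sigma$). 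Collecting these yields $\E\big[\sup_t|\Delta^n_t|^p\big]\le C(\|b\|_\infty,p)|x-y|^p$ with $C$ independent of $n$.

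Finally I would pass to the limit: along a subsequence $n_k$, $X^{x,n_k}_t\to X^x_t$ and $X^{y,n_k}_t\to X^y_t$ in $L^2$ (and a.s. along a further subsequence) for each fixed $t$, and since the paths of $X^x$ are continuous (Corollary \ref{corholdx}), $\sup_{t\in[0,T]}|X^x_t - X^y_t|^p = \sup_{t\in \mathbb{Q}\cap[0,T]}|X^x_t - X^y_t|^p$ is a countable supremum; applying Fatou's lemma to $\liminf_k \sup_{t\in F}|\Delta^{n_k}_t|^p$ over finite sets $F\subset \mathbb{Q}\cap[0,T]$ and then monotone convergence in $F$ gives $\E[\sup_t|X^x_t-X^y_t|^p]\le \liminf_k \E[\sup_t|\Delta^{n_k}_t|^p]\le C(\|b\|_\infty,p)|x-y|^p$. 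The main obstacle is the uniform-in-$n$ exponential moment bound for $\int_0^T|\gamma^n_u|\diffns u$: controlling the $b'_{2,n}$-term requires care because $b'_{2,n}$ is only bounded in $L^1$ (it converges to the measure $\diffns b_2$), so one genuinely needs the Girsanov reduction to an explicit Brownian trajectory plus an occupation-time/local-time estimate (or the Shaposhnikov inequality applied cleverly) rather than a crude $L^\infty$ bound, and one must ensure the multiplicative random factor $M(\omega)$ is absorbed without destroying the $n$-uniformity.
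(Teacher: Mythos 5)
Your overall strategy (approximate, bound uniformly in $n$, then pass to the limit via Fatou) matches the paper's, but the mechanism by which you try to get the uniform-in-$n$ bound has a gap that the paper's proof is specifically designed to avoid.

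The problem is the mean-value-theorem step. Writing $\Delta^n_t = X^{x,n}_t - X^{y,n}_t$ and applying MVT gives
\begin{equation*}
\Delta^n_t = (x-y)\exp\Big(\int_0^t\big\{\partial_x b_{1,n}(u,\xi^n_u) + b_{2,n}'(\eta^n_u)b_3(u,\omega)\big\}\diffns u\Big)
\end{equation*}
with intermediate points $\xi^n_u,\eta^n_u$ lying somewhere between $X^{x,n}_u$ and $X^{y,n}_u$. You then want to bound $\E\big[\exp(p\int_0^T|\gamma^n_u|\diffns u)\big]$ uniformly in $n$ by Girsanov plus Proposition \ref{propSha1}. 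But the Shaposhnikov estimate controls $\int_0^T\partial_x b(u,B_u)\diffns u$ along a \emph{Brownian trajectory}, and the Girsanov change of measure that removes the drift of $X^{x,n}$ turns $X^{x,n}$ into a Brownian motion but does nothing useful to the intermediate process $u\mapsto\xi^n_u$, which is merely some measurable selection between two diffusion paths and is not itself a diffusion. Your own write-up slides over this when it says it ``reduces $\int_0^T\partial_x b_{1,n}(u,X^{x,n}_u)\diffns u$ to $\int_0^T\partial_x b_{1,n}(u,\sigma B_u+x)\diffns u$'' -- the integral that actually appears has $\xi^n_u$, not $X^{x,n}_u$. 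The same issue hits the $b_{2,n}'(\eta^n_u)$ term even harder: the $L^1$ bound $\|b_{2,n}'\|_{L^1(\diffns x)}\le\mathrm{Var}(b_2)$ is in the space variable, and converting it to a bound on $\int_0^T|b_{2,n}'(\eta^n_u)|\diffns u$ requires an occupation-time/local-time identity for the process $\eta^n_u$, which is not available.

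The paper circumvents this by never introducing intermediate points. It uses the fundamental theorem of calculus in the initial condition, $X^{x,n}_t - X^{y,n}_t = \int_y^x\partial_z X^{z,n}_t\,\diffns z$, together with the explicit exponential formula \eqref{eqexplifirstvar1} for $\partial_z X^{z,n}_t$, whose integrand is evaluated along $X^{z,n}_u$ -- a genuine diffusion for each fixed $z$ -- so that Girsanov and Proposition \ref{propSha1} (via Lemma \ref{lem:auxlemma}) do apply. It then feeds the resulting increment estimate $\sup_z\E\big[|\partial_z X^{z,n}_t-\partial_z X^{z,n}_s|^p\big]\le C|t-s|^{p/2}$ into the GRR inequality applied to $f(t)=|X^x_t-X^y_t|$, with Fatou used to pass from the approximating processes. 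Note also that, despite your opening paragraph, your proposed argument never actually invokes GRR: the pathwise exponential representation is supposed to supply the $\sup_t$ bound directly. That shortcut is appealing, but it is precisely what forces you into the intermediate-point trap; if you want to salvage your route you would have to replace MVT by the $\int_y^x\partial_z X^{z,n}_t\,\diffns z$ representation as the paper does, at which point you are essentially doing the paper's proof.
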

	\begin{proof}
		The proof of the above Proposition is postpone to  Subsection \ref{proofteosobdif} and it relies on Theorem \ref{LemGRR}.
	\end{proof}
	We also have the following result regarding the bound of the supremum of the first variation. 
	\begin{cor}\label{corsupclasder}
		Let $X^{x}$  be the strong solution to the SDE \eqref{eqconpb1} and let $p\geq 1$. Then there exists a nondecreasing function $C$ such that the following inequality holds:
		\begin{align}\label{eqsup2}
			\mathbb{E}\big[\sup_{t\in [0,T]}|\partial_xX^x_t|^p\big]\leq C(\|b\|_\infty,p).
		\end{align}
	\end{cor}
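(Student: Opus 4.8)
The plan is to combine the uniform $L^p$-bound on the Sobolev derivative from Proposition \ref{pro:bound derivative} with the supremum-in-time estimate machinery already used for Proposition \ref{theosup1}. First I would recall that, by Proposition \ref{pro:bound derivative} applied with $s=0$, we have the uniform-in-$n$ bound $\sup_{x}\E[|\partial_x X^{0,x,n}_t|^p]\le \tilde C(p,\sigma,\|b\|_\infty)$ for every fixed $t$, and by Theorem \ref{thm:sumprod1} the limiting map $x\mapsto X^x_t$ is in $L^2(\Omega,W^{1,p}(\mathbb{R},w))$, so $\partial_x X^x_t$ exists (in the weak/Sobolev sense) and inherits the same bound for a.e.\ $x$ and every fixed $t$. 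The issue is that this controls $\E[|\partial_x X^x_t|^p]$ pointwise in $t$ but not $\E[\sup_{t\in[0,T]}|\partial_x X^x_t|^p]$; upgrading to the supremum is exactly where Theorem \ref{LemGRR} (the Garcia--Rodemich--Rumsey-type inequality) enters, precisely as in the proof of Proposition \ref{theosup1}.

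The key steps, in order, are as follows. (1) Write the (approximate) first variation process $Y^{n,x}_t:=\partial_x X^{0,x,n}_t$, which by the chain rule solves the linear equation $Y^{n,x}_t = 1 + \int_0^t\big(\partial_x b_{1,n}(u,X^{n,x}_u)+b'_{2,n}(X^{n,x}_u)b_3(u,\omega)\big)Y^{n,x}_u\diffns u$, hence has the explicit exponential representation $Y^{n,x}_t=\exp\!\big(\int_0^t(\partial_x b_{1,n}(u,X^{n,x}_u)+b'_{2,n}(X^{n,x}_u)b_3(u,\omega))\diffns u\big)$. (2) Use the integration-with-respect-to-space-time-local-time device (Proposition \ref{prop:EisenbaumBV}) together with the bound in Proposition \ref{propSha1} to rewrite the exponent in terms of local-time integrals of the (bounded-variation) data $b_1,b_2$ and to obtain, via the Girsanov theorem and boundedness of $b_3$ by $M(\omega)$, exponential moment bounds of the form $\E[\exp(q|\int_{t'}^t(\cdots)\diffns u|^2)]\le C$ uniformly in $n$ — this is the same estimate underlying Lemma \ref{lem:auxlemma}. (3) From these exponential moment bounds derive a Hölder-type increment estimate $\E[|Y^{n,x}_t-Y^{n,x}_{t'}|^p]\le \mathcal C(p,\|b\|_\infty)|t-t'|^{p/2}$ for all $0\le t'\le t\le T$, uniformly in $n$ and $x$; combined with the pointwise bound from Proposition \ref{pro:bound derivative} this puts $Y^{n,x}$ into the scope of the Kolmogorov/Garcia--Rodemich--Rumsey estimate (Theorem \ref{LemGRR}), yielding $\E[\sup_{t\in[0,T]}|Y^{n,x}_t|^p]\le C(\|b\|_\infty,p)$ uniformly in $n$ and $x$. (4) Finally, pass to the limit $n\to\infty$: by Theorem \ref{thm:sumprod1} (and the arguments of Proposition \ref{theosup1}, Corollary \ref{corholdx}) $X^{n,x}\to X^x$ and $Y^{n,x}\to \partial_x X^x$ in the appropriate sense, so Fatou's lemma transfers the uniform bound to the limit, giving $\E[\sup_{t\in[0,T]}|\partial_x X^x_t|^p]\le C(\|b\|_\infty,p)$.

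The main obstacle I expect is step (3): controlling the time increments of the exponential $Y^{n,x}_t$ uniformly in the smoothing parameter $n$ when the drift data $b_1$ is only bounded measurable and $b_2$ is only of bounded variation. The naive bound $|\partial_x b_{1,n}|\lesssim \|b_{1,n}'\|_\infty$ blows up as $n\to\infty$, so one genuinely needs the local-time representation (Proposition \ref{prop:EisenbaumBV}) to trade the spatial derivative for an integral against $\diffns L^x_t$ of the total-variation measure of $b_2$ (and the analogous Shaposhnikov-type bound of Proposition \ref{propSha1} for the $b_1$ part), and then a Girsanov change of measure to reduce the state process $X^{n,x}$ to a Brownian motion so that the occupation-time/local-time moment bounds become available. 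Once the exponential moment estimate $\E[\exp(q(\int_{t'}^t\cdots)^2)]\le C$ is in hand uniformly in $n$, the increment estimate and the application of Theorem \ref{LemGRR} are routine, and the limit passage in step (4) is standard given the convergence results already established.
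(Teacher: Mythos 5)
Your proposal is correct and takes essentially the same route the paper intends: the corollary carries no separate proof precisely because the argument for Proposition \ref{theosup1} already supplies the exponential representation \eqref{eqexplifirstvar1} for $\partial_x X^{s,x,n}_t$, the uniform-in-$n$ exponential moment bound of Lemma \ref{lem:auxlemma}, the time-increment estimate \eqref{eqfor41}, and the Garsia--Rodemich--Rumsey inequality (Theorem \ref{LemGRR}) applied with $f(t)=\partial_x X^{x,n}_t$ and $f(0)=1$, followed by the limit passage — all of which you identify. The only cosmetic difference is that you route step (2) through Proposition \ref{prop:EisenbaumBV}; in fact the uniform exponential moment bound of Lemma \ref{lem:auxlemma} is obtained directly via Girsanov together with Proposition \ref{propSha1} (for the $b_2$ part) and the estimate of \cite{MMNPZ13} (for the $b_1$ part), and the space-time local-time integration is invoked only later, in Theorem \ref{Thmexplimalder}, to identify the limiting form of $\partial_x X^x_t$.
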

	
	In the next section, we discuss results related to the space-time integration with respect to local time of random processes of bounded variation, which play a key role in the explicit representation of both the Malliavin derivative and the first variation.

	\section{Integration with respect to local time of random processes}\label{seclocaltime}
	Let $X^x$ be the (strong) solution to the SDE \eqref{eq:SDErvmain1}.  In this section, we aim to define integration over time and space with respect to the local time $L^{X^x}(t,y)$ of $X^x$ when the integrand is a random process. Such a definition was introduced in \cite{Ein2000} for the case where $X^x$ is a Brownian motion. To extend this framework to the local time of the solution to \eqref{eq:SDErvmain1}, we will employ a change of measure under which $X^x$ is a Brownian motion. For reference, we recall the following definition from \cite[Definition 5.1]{Ein2000}.

	\begin{defi}\label{defltran}
		Let $A=\{A(t,y);\,y\in\R,\,t\geq0\}$ be a double indexed random process. For $a<b$, consider $(y_i)_{0\leq i\leq n}$ a subdivision of $[a,b]$ and $(s_j)_{0\leq j\leq m}$ a subdivision of $[0,t]$. Note $\Delta$ the grid $\{(y_i,s_j),\,0\leq i\leq n,\,0\leq j\leq m\}$. When the expression
		\begin{align*}
			\sum\limits_{0\leq i\leq n,0\leq j\leq m}A(s_j,y_i)\Big(L^{X^x}(s_{j+1},y_{i+1})-L^{X^x}(s_{j},y_{i+1})-L^{X^x}(s_{j+1},y_{i})+L^{X^x}(s_{j},y_{i})\Big)
		\end{align*}
		has a limit in probability as $\Delta\to0$, we denote this limit $\int_0^t\int_a^bA(s,y)L^{X^x}(\mathrm{d}s,\mathrm{d}y)$.
	\end{defi}
	
	\begin{prop}\label{prop:EisenbaumBV}
		Let $(A(t,y)=A_1(y)A_2(t);\,y\in\R,\,t\geq0)$ be a double indexed random process such that $A_1$ is a bounded variation random process and $A_2$ is a random process with continuous paths $\mathbb{P}$-a.s. Then for any triple $(t,a,b)\in[0,\infty)\times\R^2$ with $t\geq0$, $a<b$, and for $\mathbb{P}$-a.e. $\omega\in\Omega$, the limit
		\begin{align*}
			&\int_0^t\int_a^bA_1(y)A_2(s)L^{X^x}(\mathrm{d}s,\mathrm{d}y)\\=&\lim\limits_{\vert\Delta\vert\to0}\sum\limits_{\begin{subarray}{c}
					0\leq i\leq n\\0\leq j\leq m
			\end{subarray}}A_1(y_i)A_2(s_j)\Big(L^{X^x}(s_{j+1},y_{i+1})-L^{X^x}(s_{j},y_{i+1})-L^{X^x}(s_{j+1},y_{i})+L^{X^x}(s_{j},y_{i})\Big)
		\end{align*}
		exists, where $(y_i)_{0\leq i\leq n}$ is a subdivision of $[a,b]$, $(t_j)_{0\leq j\leq m}$ and $\Delta$ is the grid $\{(y_i,s_j);\,0\leq i\leq n,\,0\leq j\leq m\}$. In particular, one has
		\begin{align}\label{EqEisenBV00}
			\int_0^t\int_{\R}A_1(y)A_2(s)\,\mathrm{d}L^{X^x}(s,y)=-\int_{\R}\Big(\int_0^tA_2(s)\mathrm{d}_sL^{X^x}(s,y)\Big)\mathrm{d}A_1(y).
		\end{align}
	\end{prop}	
	\begin{proof}
		See Appendix \ref{proofeisen1}.
	\end{proof}
	
	\begin{remark}\label{remRNder}
		The Radon-Nikodym density defined by 
		$$
		\frac{\diffns \Q}{\diffns \PP}:={\mathcal E}\big(-\int_0^T u(s,\omega,X_s^x)\diffns B_s\big),
		$$
		where the function $u$ is such that $|u(t,\omega,x)|\leq CM(\omega)$ for all $t\in [0,T], \,\, x\in \mathbb{R}$ with $M(\omega)$ satisfying the Novikov condition defines a probability measure $\Q$ equivalent to $\PP$. Thanks to the Girsanov Theorem, under $\Q$, the solution $X^x$ to the SDE \eqref{eq:SDErvmain1} is a Brownian motion.
	\end{remark}
	
	We have the following
	
	\begin{lemm}\label{lemconsLTI1}
		Let $X^x$ be the solution to the SDE \eqref{eq:SDErvmain1}. Suppose that $A$ is as in Proposition \ref{prop:EisenbaumBV}. Then the limit in Definition \ref{defltran} exists. 
	\end{lemm}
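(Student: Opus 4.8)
The plan is to transfer the problem from the local time $L^{X^x}$ of the (irregular) diffusion $X^x$ to the local time of a Brownian motion, for which the existence of the integral in Definition \ref{defltran} is already known from \cite{Ein2000} (and, for the bounded-variation $\times$ continuous factorization $A=A_1A_2$, from Proposition \ref{prop:EisenbaumBV}). The key observation is that the Riemann--Stieltjes-type sums appearing in Definition \ref{defltran},
\begin{equation*}
S_\Delta := \sum_{0\le i\le n,\,0\le j\le m} A_1(x_i)A_2(t_j)\bigl(L^{x_{i+1}}_{t_{j+1}}-L^{x_{i+1}}_{t_{j}}-L^{x_{i}}_{t_{j+1}}+L^{x_{i}}_{t_{j}}\bigr),
\end{equation*}
depend only on the \emph{paths} of the two-parameter field $(t,x)\mapsto L^{X^x}(t,x)$ and on the paths of $A_1,A_2$; they do not involve the underlying probability measure. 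Hence convergence in $\PP$-probability of $S_\Delta$ as $|\Delta|\to 0$ is equivalent to convergence in $\Q$-probability, for any measure $\Q$ equivalent to $\PP$.

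First I would invoke Remark \ref{remRNder}: since $b(t,\omega,x)=b_1(t,x)+b_2(x)b_3(t,\omega)$ is bounded by $C M(\omega)$ with $M$ satisfying the Novikov condition (Assumption \ref{mainassum}), the Girsanov density $\frac{\diff\Q}{\diff\PP}={\mathcal E}\bigl(-\int_0^T u(s,\omega,X^x_s)\diff B_s\bigr)$ with $u$ the drift of \eqref{eq:SDErvmain1} defines a probability measure $\Q\sim\PP$ under which $X^x$ is a Brownian motion $W$. Consequently, for $\Q$-a.e.\ (equivalently $\PP$-a.e.) $\omega$, the field $L^{X^x}(t,x)$ equals the Brownian local time $L^W(t,x)$, which has a jointly continuous version. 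Second, since $A_1$ is a bounded variation random process and $A_2$ has $\PP$-a.s.\ (hence $\Q$-a.s.) continuous paths, and these path properties are measure-independent, Proposition \ref{prop:EisenbaumBV} applies under $\Q$: for $\Q$-a.e.\ $\omega$ the sums $S_\Delta$ converge as $|\Delta|\to0$ to $-\int_{\R}\bigl(\int_0^t A_2(s)\,\diff_s L^x_s\bigr)\diff A_1(x)$, in particular they converge in $\Q$-probability. Third, because $\Q$ and $\PP$ are mutually absolutely continuous, $\Q$-a.s.\ convergence implies $\PP$-a.s.\ convergence, and in particular the limit in probability under $\PP$ required by Definition \ref{defltran} exists; this is precisely the assertion of Lemma \ref{lemconsLTI1}.

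The only point requiring a little care --- and the main (mild) obstacle --- is to make rigorous the identification of $L^{X^x}$ under $\PP$ with the Brownian local time $L^W$ under $\Q$: one must check that the occupation-density definition of local time is preserved under the change of measure, i.e.\ that the version of $L^{X^x}$ used in Definition \ref{defltran} can be taken so that, on the $\PP$-full set where the Girsanov change of variables is valid, its trajectories coincide with those of a jointly continuous Brownian local time. This follows from the occupation times formula, which only refers to the quadratic variation of $X^x$ (equal to $|\sigma|^2 t$ under both measures) and to Lebesgue-a.e.\ level sets of the path, both of which are invariant under the equivalent change of measure; thus the jointly continuous version of $L^{X^x}$ exists $\PP$-a.s.\ and agrees pathwise with $L^W$. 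Once this identification is in hand, the rest is the immediate transfer of Proposition \ref{prop:EisenbaumBV} described above.
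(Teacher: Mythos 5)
Your proposal is correct and follows essentially the same route as the paper: change measure via the Girsanov transform of Remark \ref{remRNder} so that $X^x$ becomes a Brownian motion under $\Q$, invoke Proposition \ref{prop:EisenbaumBV} to get convergence of the Riemann sums under $\Q$, and transfer the limit in probability back to $\PP$ by equivalence of the two measures. The extra care you take in justifying that the local-time field is path-identical under the change of measure is a useful elaboration of a point the paper leaves implicit, but it is not a different argument.
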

	
	\begin{proof}
		Let $\Q$ be the probability measure from Remark \ref{remRNder} under which $X^x$ is a Brownian motion. Let $F=\int_0^T\int_a^bA(y,s)L^{X^x}(\diffns y,\diffns s)$ be the integration over the time and space with respect to the local time of the Brownian motion $X^x$ under $\Q$. Such process exists as a limit in probability under $\Q$ (see Definition \ref{defltran} and Proposition \ref{prop:EisenbaumBV}.). Thus the convergence in probability also holds under $\PP$ since $\PP$ and $\Q$ are equivalent.
	\end{proof}

	\begin{lemm}\label{lemeqra1}
		Suppose that $A$ is as in Proposition \ref{prop:EisenbaumBV}. Suppose in addition that $A(\cdot, t)$ admits a weak spatial derivative. Then for all $t\in [0,T]$, we have
		\begin{align}\label{eqLTf1}
			\int_0^T\int_\mathbb{R}A(y,s,\omega)L^{X^x}(\diffns y,\diffns s)=-\int_0^T\frac{\partial A}{\partial y}(X_s^x,s,\omega)\diffns s.
		\end{align}

	\end{lemm}
	\begin{proof}
		Suppose first that $ X=B$. Then the Equality \eqref{eqLTf1} is given by \cite[Theorem 5.1]{Ein2000}. The generalisation to the case of Brownian motion starting at an arbitrarily $x\in \mathbb{R}$ follows easily. Suppose now that $X^x$ is the solution to \eqref{eq:SDErvmain1}. Then under the equivalent martingale measure $\Q$ given in Remark \ref{remRNder} the equality\eqref{eqLTf1} holds.  Then using similar steps as in the proof of Lemma \ref{lemconsLTI1} we have that the integrals in \eqref{eqLTf1} are with respect to the Brownian motion $X^x$, and by \cite[Theorem 5.1]{Ein2000}, the identity is valid. The proof is completed.
	\end{proof}
	
	\section{Application to optimal fund rate}\label{appli}
	
	In this section, we apply the stochastic maximum principle developed in Section \ref{secmaxprinres} to study the problem introduced in Subsection \ref{subsec:motivating}. More precisely the problem is the following:
	
	\begin{prob}\label{probex1}
		Find $\hat \alpha \in \mathcal{A}$ such that 
		\begin{equation}\label{pracex21}
			J(\hat \alpha(\cdot)) = \inf_{\alpha \in \mathcal{A}}\mathbb{E}\big[(X^{\alpha, x}_T)^2\big]
		\end{equation} 
		subject to
		\begin{equation}\label{pracex11}
			\mathrm{d}X^{\alpha, x}_t
			= \big( \mu\,\tanh(X^{\alpha, x}_t/M)  - \alpha_t\,\sgn(X^{\alpha, x}_t)\mathbf{1}_{\{|X^{\alpha, x}_t|>\rho\}} \big)\mathrm{d}t
			+ \sigma\mathrm{d}B_t,
			\qquad X_0 = x,
		\end{equation}
	\end{prob}

	The main result of this section is the following
	\begin{prop}
		The optimal control for the Problem \ref{probex1} is given as
		\begin{equation}
			\label{eq:max}
			\hat\alpha_t =-\sgn(-\hat X_t \sgn(\hat X_t)\mathbf{1}_{\{|\hat X_t|>\rho\}})=\mathbf{1}_{\{|\hat X_t|>\rho\}}.
		\end{equation}
	\end{prop}

	\begin{proof}
		Let the state process be given by \eqref{pracex11}. We decompose the drift as follows:
		$$
		b_1(t,x) = \mu\,\tanh(x/M), \qquad 
		b_2(x) = -\sgn(x)\mathbf{1}_{\{|x|>\rho\}}, \qquad 
		b_3(t,\alpha) = \alpha.
		$$
		Since the terminal cost $g:x \mapsto g(x)=x^2$ is continuously differentiable, even, nonnegative, and monotone increasing on $(0,\infty)$, it follows that the optimal control $\hat{\alpha}$ should minimize the absolute value $|X_T^{\alpha,x}|$. 
		
		Applying It\^o–Tanaka's formula to $|X_t^x|$ yields
		$$
		\diff |X_t^{x}| = \sgn(X_t^{x}) \Big\{ \big( \mu\,\tanh(X_t^{x}/M) 
		- \alpha_t\,\sgn(X_t^{x})\mathbf{1}_{\{|X_t^{x}|>\rho\}} \big)\mathrm{d}t
		+ \sigma\,\mathrm{d}B_t \Big\} + \mathrm{d}_tL^{X^x}(t,0),
		$$
		where $L^{X^x}(\cdot,0)$ denotes the local time of $X^x$ at zero.  
		Thus to minimize $|X_t^{x}|$, we choose
		$$
		\hat{\alpha}_t = -\sgn\big(-\hat{X}_t\,\sgn(\hat{X}_t)\mathbf{1}_{\{|\hat{X}_t|>\rho\}}\big)=\mathbf{1}_{\{|\hat{X}_t|>\rho\}}.
		$$
		The Hamiltonian 
		$$
		H(t,\hat X_t,\alpha_t,\hat Y_t) = \big(\mu\,\tanh(\hat X_t/M) - \sgn(\hat X_t)\mathbf{1}_{\{|\hat X_t|>\rho\}}\alpha\big)\hat Y_t.
		$$
		is maximised by
		$$
		\hat{\alpha}_t = \sgn\big(-\hat Y_t\,\sgn(\hat{X}_t)\mathbf{1}_{\{|\hat{X}_t|>\rho\}}\big).
		$$
		From Theorem~\ref{thm:necc}, the adjoint process satisfies
		$$
		\hat Y_t = -2\,\mathbb{E}\big[\Phi^{\hat{\alpha}}_{t,T}\,X^{\hat{\alpha}}_T\mid \mathcal{F}_t\big], 
		$$
		where
		\begin{align}\label{abc12}
			\Phi^{\hat{\alpha}}_{t,T} 
			=& \exp\Big\{
			\int_t^T \frac{\mu}{M\cosh^2(\hat{X}_s/M)}\,\mathrm{d}s
			+ \int_{\mathbb{R}}\mathbf{1}_{\{|z|>\rho\}}\sgn(z)\big(L^{\hat{X}}(T,\mathrm{d}z)-L^{\hat{X}}(t,\mathrm{d}z)\big)
			\Big\} \notag \\
			=& \exp\Big\{
			\int_t^T \frac{\mu}{M\cosh^2(\hat{X}_s/M)}\,\mathrm{d}s
			+ 2\Big( 
			\bar{b}_2(\hat{X}_T) - \bar{b}_2(\hat{X}_t) \notag \\
			&\qquad 	+ \int_t^T \mathbf{1}_{\{|\hat{X}_s|>\rho\}}\sgn(\hat{X}_s)\mu\,\tanh(\hat{X}_s/M)\,\mathrm{d}s \notag \\
			&\qquad
			-\int_t^T \mathbf{1}_{\{|\hat{X}_s|>\rho\}}\,\mathrm{d}s
			+ \int_t^T \mathbf{1}_{\{|\hat{X}_s|>\rho\}}\sgn(\hat{X}_s)\sigma\,\mathrm{d}B_s
			\Big)
			\Big\}.
		\end{align}
		
		To derive this expression, define
		$$
		\bar{b}_2(x) := -\int_{-\infty}^{x}\mathbf{1}_{\{|z|>H\}}\sgn(z)\,\mathrm{d}z
		= \int_{-\infty}^{x} b_2(z)\,\mathrm{d}z.
		$$
		The function $\bar{b}_2$ is even. By the Bouleau–Yor formula (see, e.g., \cite[Theorem 77, p.~227]{cmr78}), we have
		$$
		\bar{b}_2(\hat{X}_T)
		= \bar{b}_2(\hat{X}_t)
		+ \int_t^T b_2(\hat{X}_s)\,\mathrm{d}\hat{X}_s
		- \frac{1}{2}\int_{\mathbb{R}} b_2(z)\big(L^{\hat{X}}(T,\mathrm{d}z)-L^{\hat{X}}(t,\mathrm{d}z)\big),
		$$
		which leads directly to the equality in \eqref{abc12}.
		
		Next, we show that $\sgn(\hat Y_t) = -\sgn(\hat{X}_t)$.  
		Substituting $\hat{\alpha}$ into the state equation gives
		$$
		\mathrm{d}\hat{X}_t
		= \big( \mu\,\tanh(\hat{X}_t/M) 
		- \sgn(\hat{X}_t)\mathbf{1}_{\{|\hat{X}_t|>\rho\}} \big)\mathrm{d}t
		+ \sigma\,\mathrm{d}B_t, 
		\qquad \hat{X}_0 = x.
		$$
		The drift of this SDE is an odd function. Define $\tilde{B}_t := -B_t$, which is a Brownian motion with the same law as $B$. Then the process $(-\hat{X}_t)$ satisfies
		$$
		\mathrm{d}(-\hat{X}_t)
		= \big( \mu\,\tanh(-\hat{X}_t/M)
		- \sgn(-\hat{X}_t)\mathbf{1}_{\{|\hat{X}_t|>\rho\}} \big)\mathrm{d}t
		+ \sigma\,\mathrm{d}(-B_t),
		$$
		By weak uniqueness, given the same initial distribution, it holds $(-\hat{X}, \tilde B)$ and $(\hat{X},B)$ have the same law. In particular, $-\hat{X}$ and $\hat{X}$ have the same distribution for all $s\ge \tau$, with $\tau:=\inf\{s\le t:\hat{X}_s=0\}$.
		
		Let us consider the following $\sigma$-algebra $\mathcal{G}_t=\sigma(\hat X_s, t\leq s\leq \tau \wedge T)$. Then we can then express $\hat Y_t $ as		
		\begin{align}
			\hat Y_t=&	\mathbb{E}\big[\Phi^{\hat{\alpha}}_{t,T}\hat X_T\,\big|\mathcal{F}_t \big]=\mathbb{E}\big[\Phi^{\hat{\alpha}}_{t,T}\hat X_T\,\big|\hat X_t \big]=\mathbb{E}\big[\Phi^{\hat{\alpha}}_{t,\tau\wedge T}\mathbb{E}\big[\Phi^{\hat{\alpha}}_{\tau\wedge T,T}\hat X_T\,\big|\mathcal{G}_t \big]\,\big|\hat X_t \big]\notag\\
			=&\mathbb{E}\big[\Phi^{\hat{\alpha}}_{t,\tau}\mathbb{E}\big[\Phi^{\hat{\alpha}}_{\tau,T}\hat X_T\,\big|\mathcal{G}_t \big]\mathbbm{1}_{\{\tau\le T\}}\,\big|\hat X_t \big]+\mathbb{E}\big[\Phi^{\hat{\alpha}}_{t, T}\mathbb{E}\big[\Phi^{\hat{\alpha}}_{T,T}\hat X_T\,\big|\mathcal{G}_t \big]\mathbbm{1}_{\{\tau> T\}}\,\big|\hat X_t \big]\notag\\
			=&\mathbb{E}\big[\Phi^{\hat{\alpha}}_{t,\tau}\mathbb{E}\big[\Phi^{\hat{\alpha}}_{\tau,T}\hat X_T\,\big|\mathcal{G}_\tau \big]\mathbbm{1}_{\{\tau\le T\}}\,\big|\hat X_t \big]+\mathbb{E}\big[\Phi^{\hat{\alpha}}_{t, T}\mathbb{E}\big[\hat X_T\,\big|\mathcal{G}_t \big]\mathbbm{1}_{\{\tau> T\}}\,\big|\hat X_t \big]\notag\\
			=&\mathbb{E}\big[\Phi^{\hat{\alpha}}_{t,\tau}\mathbb{E}\big[\Phi^{\hat{\alpha}}_{\tau,T}\hat X_T\,\big|\hat X_\tau \big]\mathbbm{1}_{\{\tau\le T\}}\,\big|\hat X_t \big]+\mathbb{E}\big[\Phi^{\hat{\alpha}}_{t, T}\mathbb{E}\big[\hat X_T\,\big|\mathcal{G}_t \big]\mathbbm{1}_{\{\tau> T\}}\,\big|\hat X_t \big]\notag\\
			=&\mathbb{E}\big[\Phi^{\hat{\alpha}}_{t,\tau}\mathbb{E}\big[\Phi^{\hat{\alpha}}_{\tau,T}\hat X_T\,\big|\hat X_\tau \big]\mathbbm{1}_{\{\tau\le T\}}\,\big|\hat X_t \big]+\mathbb{E}\big[\Phi^{\hat{\alpha}}_{t, T}\hat X_T\mathbbm{1}_{\{\tau> T\}}\,\big|\hat X_t \big],
		\end{align}	
		where we have used the Markov (or strong Markov) property together with 
		$\mathcal{G}_\tau \subset \mathcal{G}_t \subset \mathcal{F}_\tau$ to obtain the first term on the right-hand side in the fifth equality. 
		In the sixth equality we use that $\mathcal{G}_\tau = \sigma(\hat X_\tau)$, and in the final equality we use that $\hat X_T$ is $\mathcal{G}_t$-measurable.
		
		Thus, the proof is complete once we show that
		\begin{equation}\label{abc13}
			I_1 := \mathbb{E}\big[\Phi^{\hat{\alpha}}_{\tau,T}\,\hat X_T \,\big|\, X_\tau\big] 
			\mathbbm{1}_{\{\tau \le T\}} = 0 .
		\end{equation}
		Indeed, when $\tau > T$, we have $\sgn(\hat{X}_t) = \sgn(\hat{X}_s)$ for all $t \le s \le T$. Thus, the term inside the expectation either vanishes or has the same sign as $\hat{X}_t$. Consequently, $\sgn(\hat Y_t) = -\sgn(\hat{X}_t)$, and $\hat{\alpha}_t$ is an optimal control.
		
		To verify \eqref{abc13}, note that by weak uniqueness, $(\hat{X},B)$ and $(-\hat{X},\tilde{B})$ have the same law under $\mathbb{P}$ when they share the same initial distribution. Since $\bar{b}_2$, $\mathbf{1}_{\{|x|>\rho\}}$, $\cosh^2(x/M)$, and $\mathbf{1}_{\{|x|>\rho\}}\sgn(x)\tanh(x/M)$ are even functions, while $\sgn(x)\mathbf{1}_{\{|x|>\rho\}}$ is odd, we have		
		\begin{align*}
			I_1 
			=& \mathbbm{1}_{\{\tau\le T\}}\mathbb{E}\big[ 
			\exp\Big\{ \int_\tau^T \frac{\mu}{M\cosh^2(-\hat{X}_s/M)}\,\mathrm{d}s
			+ 2\Big( \bar{b}_2(-\hat{X}_T) - \bar{b}_2(-\hat{X}_t) \\
			&\quad + \int_\tau^T \mathbf{1}_{\{|-\hat{X}_s|>\rho\}}\sgn(-\hat{X}_s)\mu\tanh(-\hat{X}_s/M)\,\mathrm{d}s
			\\
			&\qquad - \int_\tau^T \mathbf{1}_{\{|-\hat{X}_s|>\rho\}}\,\mathrm{d}s 
			+ \int_\tau^T \mathbf{1}_{\{|-\hat{X}_s|>\rho\}}\sgn(-\hat{X}_s)\sigma\,\mathrm{d}\tilde{B}_s
			\Big)\Big\}\times (-\hat{X}_T) \,\big|\, X_\tau \big] \\
			=& -\mathbbm{1}_{\{\tau\le T\}}\mathbb{E}\big[ 
			\Phi^{\hat{\alpha}}_{\tau,T} \hat{X}_T \,\big|X_\tau \big]
			= -I_1,
		\end{align*}
		which implies $I_1=0$. The proof is complete.

	\end{proof}

	\subsection*{Numerical illustrations}
	Figure~\ref{fig:image1} illustrates the performance criteria of the controlled system for the parameter values 
	\(\mu = 0.5\), \(M = 4\), \(\rho = 2\), \(\sigma = 1.0\), \(x_0 = 0.0\), and \(T = 5\), under several choices of the control function:\\
	$
	\alpha_t \in 
	\Big\{
	\mathbf{1}_{\{|X_t|>\rho\}},
	\frac{X_t}{1+X_t^2},
	\frac{1}{1+X_t^2},
	\mathrm{sign}(X_t),
	-\mathrm{sign}(X_t),\;
	-\mathbf{1}_{\{|X_t|>\rho\}},
	\int_0^t e^{-s}\frac{1}{1+B_s^2}\,\mathrm{d}s
	\Big\}.
	$
	
	For each of these choices, the stochastic differential equation admits a unique strong solution, and the controls satisfy the standard admissibility conditions (progressive measurability, boundedness and integrability). 
	
	Among all controls, \(\alpha_t = \mathbf{1}_{\{|X_t|>\rho\}}\) yields the lowest cost, indicating its strong stabilising effect on the state process.  The integral-type control provides intermediate performance, whereas sign-based and negative controls lead to larger expected costs. 
	
	Figure~\ref{fig:image2} also shows the dependence of the value function on the exit corridor $\rho$. As $\rho$ increases, the value function grows monotonically, reflecting weaker control intensity and larger state excursions.

	In addition, Figure~\ref{fig:image2} displays the effect of varying the exit corridor \(H\) on the value function. We observe that the value function is an increasing function of \(H\), reflecting that wider corridors reduce the strength of the control, thereby allowing the state process to fluctuate more freely.  This monotonic relationship is consistent with theoretical expectations in stochastic control, where weaker control generally corresponds to higher performance costs.

	Figure \ref{fig:image1} depicts the performance criteria for function for Next we plotted some graphs for $\mu = 0.5, M = 4, H = 2, \sigma = 1.0, x_0 = 0.0, T = 5$ and different form of $\alpha_t=\mathbf{1}_{\{|X_t|>H\}},  \frac{X_t}{1+X_t^2}, \frac{1}{1+X_t^2}, \mathrm{sign}(X_t), -\mathrm{sign}(X_t), -\mathbf{1}_{\{|X_t|>H\}},  \int_0^t e^{-s}\frac{1}{1+B_s^2}\,\mathrm{d}s$. Observe that for the different form of $\alpha$ the corresponding SDE has a unique strong solution. In addition the control are admissible. As seem in the graph the value of the performance functional corresponding to $\alpha_t=\mathbf{1}_{\{|X_t|>H\}}$ is the smallest one. In Figure \ref{fig:image1}, we show the impact of the exit corridor $H$ on the value function. As seen, the value function is an increasing function of $H$.
	
	\begin{figure}[h!]
		\centering
		\begin{minipage}{0.45\textwidth}
			\centering
			\includegraphics[width=\linewidth]{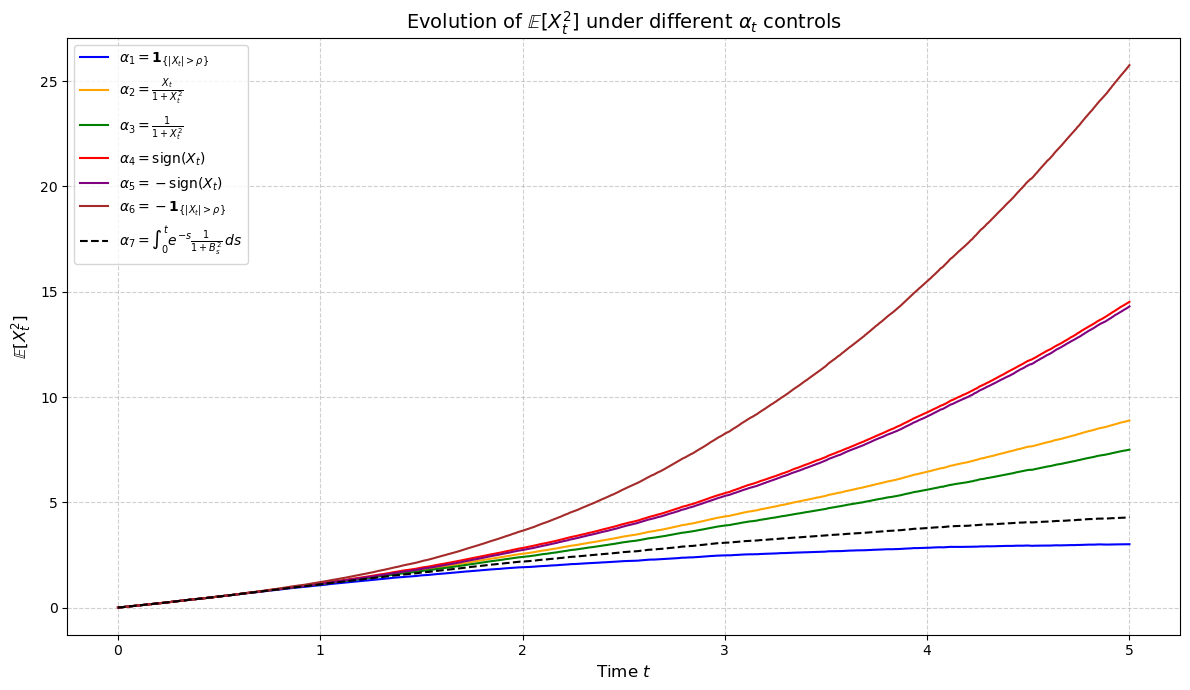}
			\caption{Terminal cost under six different control strategies}
			\label{fig:image1}
		\end{minipage}
		\hfill
		\begin{minipage}{0.45\textwidth}
			\centering
			\includegraphics[width=\linewidth]{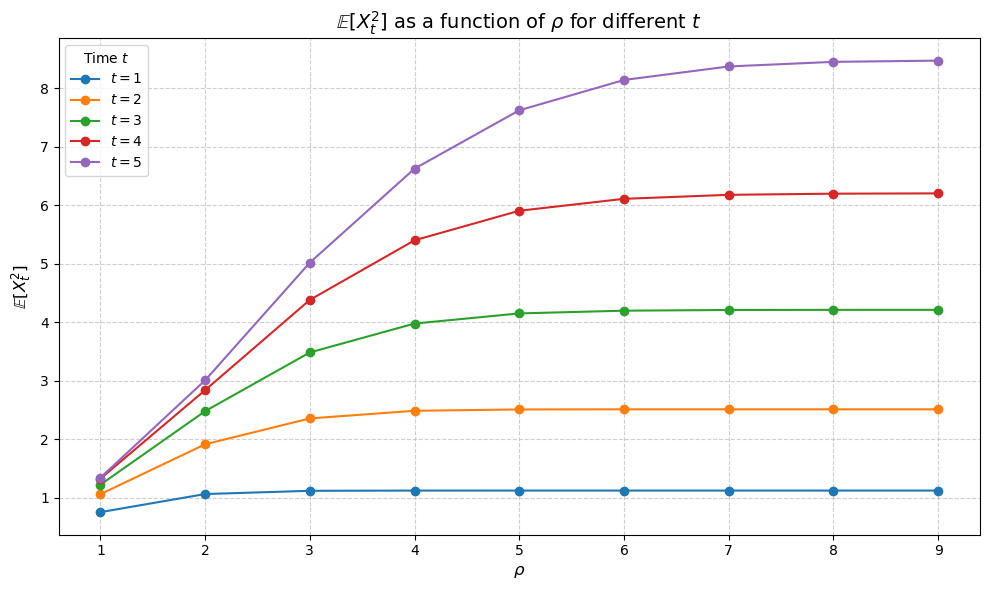}
			\caption{Value function as function of $\rho$}
			\label{fig:image2}
		\end{minipage}
		\label{fig:two_images_minipage}
	\end{figure}

	\section{Proofs of the main results}\label{proffmainr}
	
	In this section, we prove the main results. 
	\subsection{Proof of results in Section \ref{SectRanSDE}}\label{sectprorsde}
	We first start by giving he proof of the results in Section \ref{secmaldif}. 
	
	\subsubsection{Proof of of Theorem \ref{thm:sumprod}}\label{proofteomaldif} 
	In the whole of this section, we suppose Assumption \ref{mainassum} is satisfied. 
	The proof of the Theorem \ref{thm:sumprod} mimics that of \cite[Theorem 1.1]{MenTan19} with some simplified arguments. 
	Throughout the proof, we fix $b_{1,n}: [0,T] \times \mathbb{R} \rightarrow \mathbb{R}$ and $b_{2,n}: \mathbb{R} \rightarrow \mathbb{R}$, $n\ge 1$ to be smooth coefficients with compact support
	and converging a.e. to $b_1$ and $b_2$, respectively (see for example Subsection \ref{secmaldif}). We denote by ${\mathcal E}(\int q\diffns B)$ the Dol\'eans-Dade exponential
	\begin{equation*}
		{\mathcal E}\big(\int q\diffns B\big)_t := \exp\big(\int_0^t q_u\diffns B_u - \frac 12 \int_0^t |q_u|^2\diffns u \big).
	\end{equation*}

	The following result will be useful
	\begin{lemm}\label{bengen}
			The process $Z:=\mathcal{E}\left( \int b(r,\omega,\sigma\cdot B_r)\diffns B_r\right)$ is a martingale.
		\end{lemm}
		\begin{proof}
			This follows from the Girsanov theorem since by (AX1) in Assumption \ref{mainassum}, Novikov condition is satisfied (see also Remark \ref{remRNder}).
		\end{proof}
		
		\begin{proof}[Proof of Proposition \ref{lemmainres1r}]

			Equation \eqref{malldifeqr} admits the explicit solution
			\begin{align} \label{MalliavinDerivativeEquationr}
				D^i_tX^{n}_s &= e^{\int_t^s\big\{\partial_xb_{1,n}(u,X^n_u)+ b_{2,n}^\prime(X^{x,n}_u) b_3(u,\omega)\big\} \diffns u}\notag\\
				&\times\Big(\int_t^sb_{2,n}(X^n_u) D_tb_3(u,\omega)e^{-\int_t^u\big\{\partial_xb_{1,n}(r,X^n_r)+ b_{2,n}'(X^{x,n}_r)\cdot b_3(\omega,r) \big\}\diffns r}\diffns u + \sigma_i \Big).
			\end{align}
			Therefore, for every $ 0 \leq t' \leq t \leq s\leq T$, $D_{t'} X^{x,n}_s - D_t X^{x,n}_s$ can be written as.
			\begin{align}\label{eqmalderpro111r}
				&D^i_{t'} X^{x,n}_s - D^i_t X^{x,n}_s \notag\\
				=& \sigma_ie^{\int_{t}^s\big\{\partial_xb_{1,n}(u,X^n_u)+ b_{2,n}^\prime(X^{x,n}_u)\cdot b_3(u,\omega)\big\} \diffns u}\big(e^{\int_{t'}^t\big\{\partial_xb_{1,n}(u,X^n_u)+b_{2,n}^\prime(X^{x,n}_u)\cdot b_3(u,\omega) \big\}\diffns u}-1\big)\notag\\
				&+\int_{t'}^tb_{2,n}(X^n_u)\cdot D_tb_3(u,\omega)e^{-\int_{s}^u\big\{\partial_xb_{1,n}(r,X^n_r)+b_{2,n}^\prime(X^{x,n}_r) b_3(\omega,r)  \big\}\diffns r}\diffns u\notag\\
				&  +\int_{t}^sb_{2,n}(X^n_u)\big(D_{t'}b_3(u,\omega)- D^i_tb_3(u,\omega)\big)e^{-\int_{s}^u\big\{\partial_xb_{1,n}(r,X^n_r)+ b_{2,n}^\prime(X^{x,n}_r)\cdot b_3(\omega,r) \big\}\diffns r}\diffns u  \notag\\
				&=  I_1+I_2+I_3.
			\end{align}
			In what follows, $C$ is a constant that may depend on $\|b\|_{\infty}$ and $M(\omega)$.  	Using H\"older inequality, and the fact that ${b}_{1,n}$ is bounded and $b_{2,n}$ is the difference of two increasing functions, we have as in the proof of Lemma \ref{lem:auxlemma}
			\begin{align}\label{eq:bound I1}
				\mathbb{E}[I_1^2]=&
				\sigma_i^2 \mathbb{E}\Big[ e^{2\int_{t}^s \big\{\partial_xb_{1,n}(u,X^n_u)+b_{2,n}^\prime(X^{n}_u) b_3(u,\omega)\big\} \diffns u}\notag\\
				&\times\big(e^{\int_{t'}^t \big\{\partial_xb_{1,n}(u,X^n_u)+b_{2,n}^\prime(X^{n}_u) b_3(u,\omega)\big\} \diffns u}-1\big)^2\Big]
				\leq  C|t-t'.
			\end{align}

				
				As for $I_2$, using H\"older inequality, the boundedness of $b_{2,n}$ and once more $D^i_tb_3$ and similar reasoning as in Lemma \ref{lem:auxlemma}, we have 
				\begin{align}	\label{eq:boundI2}
					\nonumber
					\mathbb{E}[I^2_2] \leq& C\int_{t'}^t\mathbb{E}\big[ \big(|b_{2,n}(X^n_u) D^i_tb_3(u,\omega)\big)^4\big]^{1/2}\\
					&\times \mathbb{E}\big[ e^{-4\int_{s}^u\big\{\partial_xb_{1,n}(r,X_r)+b_{2,n}^\prime(X^{x,n}_r) b_3(\omega,r)  \big\}\diffns r}\big]^{1/2}\diffns u 
					\leq  C|t-t'|.
				\end{align}
				
				Once more repeated use of H\"older inequality, the boundedness of $b_1$ and $b_3$, and (AX2) in Assumption \ref{mainassum} give
				\begin{align}
					\nonumber
					\mathbb{E}[I^2_3] \leq& \int_{t}^s\big\{\mathbb{E}\big[ \big(b_{2,n}(X^n_u)\big( D^i_{t'}b_3(u,\omega)-D^i_tb_3(u,\omega)\big)\big)^4\big]^{1/2}\\
					&\times\mathbb{E}\big[e^{-4\int_{s}^u\big\{\partial_xb_{1,n}(r,X^n_r)+ b_{2,n}^\prime(X^{x,n}_r)\cdot b_3(\omega,r) \big\}\diffns r} \big]^{1/2}\big\}\diffns u\le C|t-t'|.
					\label{eq:boundI3}
				\end{align}

				Combining \eqref{eq:bound I1}, \eqref{eq:boundI2} and \eqref{eq:boundI3}, yields 
				$$
				\mathbb{E} \left[ | D_t X^{x,n}_s - D_{t'} X^{x,n}_s |^2 \right] \leq C|t -t'|.
				$$
				Thus the first part of the Lemma is shown.
				Taking $t'>s$ above, $D_{t'}X_s^{x,n}=0$, which implies 
				$$
				\sup_{0 \leq t \leq T} \mathbb{E} \left[ | D_t X^{x,n}_s |^2 \right] \leq C.
				$$
				This proves the proposition.
			\end{proof}
			\textbf{Step 1:} We start by showing that the SDE \eqref{eq:SDE sumprod} has a weak solution.
			\begin{lemm}[Weak Existence]
				\label{lem:weak existence}
				The SDE \eqref{eq:SDE sumprod} admits a weak solution $(X_t^x)_{t\geq 0}$. 
			\end{lemm}
			\begin{proof}
				Let $(\Omega, \mathcal{F}, \mathbb{Q})$ be a probability space supporting a Brownian motion $\hat{B}$ taking values in $\mathbb{R}^d$, and define 
				$
				X_t^x := x + \sigma \cdot \hat{B}_t, \quad 0 \le t \le T.
				$
				By \eqref{a1} and Lemma \ref{bengen}, the stochastic exponential 
				$$
				\mathcal{E}\big(\int_0^\cdot u(r, \omega, X_r^x)\, \mathrm{d}\hat{B}_r \big),
				$$
				where $
				u_{i} := \frac{\sigma_i}{\sigma_1^2+\cdots + \sigma_d^2}\big(b_{1} + b_{2}b_3\big)
				$	defines an equivalent probability measure $\mathbb{P}$ via
				$$
				\frac{\mathrm{d}\mathbb{Q}}{\mathrm{d}\mathbb{P}}
				:= \mathcal{E}\big(\int_0^T u(r, \omega, X_r^x)\, \mathrm{d}\hat{B}_r \big).
				$$
				Moreover, by Girsanov’s theorem, the process
				$$
				B_t := \hat{B}_t - \int_0^t u(r, \omega, X_r^x)\, \mathrm{d}r
				$$
				is a Brownian motion under $\mathbb{P}$. Consequently,
				\begin{align}
					\nonumber
					X_t^x 
					&= x + \int_0^t \sigma \cdot u(s, \omega, X_s^x)\, \mathrm{d}s + \sigma \cdot B_t, 
					\quad \mathbb{P}\text{-a.s.}, \; 0 \le t \le T,\\
					\label{eq:weak equation}
					&= x + \int_0^t b(s, \omega, X_s^x)\, \mathrm{d}s + \sigma \cdot B_t, 
					\quad \mathbb{P}\text{-a.s.}, \; 0 \le t \le T,
				\end{align}
				showing that $(X^x, B)$ is a weak solution to the SDE \eqref{eq:SDE sumprod} on $(\Omega, \mathcal{F}, \mathbb{P})$.
				
			\end{proof}
			
			In the following, we work on the stochastic basis 
			\[
			(\Omega, \mathcal{F}, \mathbb{P}, \{\mathcal{F}_t\}_{t \in [0,T]}),
			\]
			which supports the weak solution $(X^x, B)$ of \eqref{eq:SDE sumprod}, where $\{\mathcal{F}_t\}_{t \in [0,T]}$ denotes the filtration generated by $\{B_t\}_{t \in [0,T]}$, augmented by the $\mathbb{P}$-null sets.
			
			\textbf{Step 2}: In this step we show that $(X^{n}_{t} )_{n \geq 1}$ converges strongly to $\E\big[X_t|\mathcal{F}_t\big]$ in the space $L^2(\Omega,\mathcal{F},\mathbb{P})$. We start by showing that  for each $0 \leq t \leq T$ the sequence $(X^{n}_{t} )_{n \geq 1}$ converges weakly to $\E\big[X_t|\mathcal{F}_t\big]$ in the space $L^2(\Omega,\mathbb{P};\mathcal{F}_t)$.

			\begin{lemm}\label{lem:weak conv weak sol}
				Let $b = b_1 + b_2 \, b_3$ be as in Theorem~\ref{thm:sumprod} and let $(b_n)_{n\ge 1}$ given as in Subsection \ref{secmaldif}.  		
				Let $X^n_\cdot$ denote the strong solution of the SDE driven by the vector field $b_n$, and let $X_\cdot$ be a weak solution of the SDE associated with $b$, defined on the same probability space.  
				Then, for any continuous function of polynomial growth $h:\mathbb{R} \to \mathbb{R}$, the sequence $\{h(X^n_t)\}_{n \ge 1}$ is uniformly bounded in $L^2(\Omega, \mathbb{P};\mathcal{F}_t)$ and satisfies
				$$
				h(X^n_t) \rightharpoonup \E\big[h(X_t)\mid \mathcal{F}_t\big]
				\quad \text{weakly in } L^2(\Omega, \mathbb{P};\mathcal{F}_t)
				\quad \text{as } n \to \infty.
				$$
			\end{lemm}

			\begin{proof}
				
				Let 
				\begin{equation*}
					u_{i,n} := \frac{\sigma_i}{\sigma_1^2+\cdots + \sigma_d^2}\big(b_{1,n} + b_{2,n}b_3\big).
				\end{equation*}			
				We first show that the sequence $\big(h(X^{x,n}_{t})\big)_{n\ge 1}$ is bounded in 
				$L^2(\Omega,\mathbb{P};\mathcal{F}_t)$. Indeed, using the Girsanov transform, Hölder’s inequality, and the uniform boundedness of $u_{i,n}$, we obtain
				\begin{align}\label{eqweaklim1}
					\sup_{n}\E\big[|h(X^{x,n}_{t})|^2\big]
					\le&\sup_{n} \E\big[ e^{2\sum_{i=1}^d\int_0^Tu_{i,n}(r,x+\sigma\cdot B_r)\diffns B_r^i
						-2\sum_{i=1}^d\int_0^Tu_{i,n}^2(r,x+\sigma\cdot B_r)\diffns r}\big]^{\!\frac{1}{2}}\notag\\
					&\quad\times \E\big[e^{2\sum_{i=1}^d\int_0^Tu_{i,n}^2(r,x+\sigma\cdot B_r)\diffns r}\big]^{\frac{1}{4}}
					\E\big[|h(x+\sigma\cdot B_{t})|^4\big]^{\!\frac{1}{4}}\notag\\
					\le	& C\,\E\big[|h(x+\sigma\cdot B_{t})|^4\big]^{\frac{1}{4}}\notag\\
					=& C\big(\frac{1}{\sqrt{2\pi t\|\sigma \|^2}}
					\int_{\mathbb{R}}|h(x+z)|^4e^{-\frac{|z|^2}{2t\|\sigma \|^2}}\diffns z\big)^{\!\frac{1}{4}}\notag\\
					\le& C(p)\big(\frac{|x|^{4p}}{\sqrt{2\pi t\|\sigma \|^2}}
					\int_{\mathbb{R}}e^{-\frac{|z|^2}{2t\|\sigma \|^2}}\diffns z 
					+ \frac{1}{\sqrt{2\pi t\|\sigma \|^2}}
					\int_{\mathbb{R}}|z|^{4p}e^{-\frac{|z|^2}{2t\|\sigma \|^2}}\diffns z\big)^{\!\frac{1}{4}}\notag\\
					\le& C(p)\big(|x|^{4p}+\frac{(t\|\sigma\|)^{2p}}{\pi}\Gamma\big(2p+\tfrac{1}{2}\big)\big)^{\frac{1}{4}}<\infty.
				\end{align}

				Next, we show that $\big(h(X^{x,n}_{t})\big)_{n\ge 1}$ converges weakly to $\E\big[h(X^x_t)\mid\mathcal{F}_t\big]$ in $L^2(\Omega,\mathbb{P};\mathcal{F}_t)$.  
				Recall that the set
				\[
				\Big\{
				\mathcal{E}\Big(\!\int_0^t\dot\varphi_u\diffns B_u\!\Big), 
				:\; \varphi\in C_b^1([0,t];\mathbb{R}^d)
				\Big\}
				\]
				is dense in $L^2(\Omega,\mathbb{P};\mathcal{F}_t)$, where $C_b^1([0,t];\mathbb{R}^d)$ denotes the space of bounded continuously differentiable functions and $\dot\varphi$ their time derivative. 
				Hence, it suffices to show that for every such $\varphi$,
				\[
				\E\big[
				h(X^{x,n}_{t})
				\mathcal{E}\big(\!\int_0^t\dot\varphi_r\diffns B_r\big)
				\big]
				\to 
				\E\big[
				\E[h(X^x_t)\mid\mathcal{F}_t]
				\mathcal{E}\big(\int_0^t\dot\varphi_r\diffns B_r\big)
				\big].
				\]
				
				By the Cameron–Martin theorem (see, e.g., \cite{UsZa1}), for any measurable $h$,
				\begin{align}\label{eq:CM}
					\E\big[h(X^x_t)\mathcal{E}\big(\int_0^t\dot\varphi_u\diffns B_u\big)\big]
					= \int_\Omega h\big(X^x_t(\omega+ \varphi)\big)\diffns \mathbb{P}(\omega).
				\end{align}
				Let $\varphi\in C_b^1([0,T];\mathbb{R}^d)$ and define, for each $n$,
				\[
				\tilde X^{x,n}_t(\omega):= X^{x,n}_t(\omega+\varphi).
				\]
				Then $\tilde X^{x,n}$ satisfies the SDE
				\begin{equation}\label{eq:CM sde}
					\diffns\tilde X^{x,n}_t
					= \big(b_{1,n}(t,\tilde X^{x,n}_t)
					+ \tilde b_{2,n}(\tilde X^{x,n}_t)\tilde b_{3}(t,\omega)
					+ \sigma\dot\varphi_t\big)\diffns t
					+ \sigma\diffns B_t,
				\end{equation}
				where $\tilde b_3(t,\omega):= b_3(t,\omega+\varphi)$.  
				This follows by applying \eqref{eq:CM} to the defining equation of $X^{x,n}$ and using the identity $B_t(\omega+\varphi)=B_t(\omega)+\varphi_t$.
				Indeed let $\mathfrak{H}\in L^2(\Omega,\mathbb{P})$
				\begin{align*}
					&\E[\tilde X^{x,n}_t\mathfrak{H}]\notag\\
					= & \E\big[X^{x,n}_t\mathfrak{H}(\omega - \varphi)\mathcal{E}\big(\int_0^t\dot\varphi(u)\diffns B_u\big)\big]\\
					=	& \E\big[\big(x + \int_0^tb_{1,n}(u, X^{x,n}_u) + b_{2,n}( X^{x,n}_u)b_3(u,\omega)\diffns u + \sigma B_t \big)\mathfrak{H}(\omega-\varphi)\notag\\
					&\times {\mathcal E}\big(\int_0^t\dot\varphi_u\diffns B_u \big) \big] \\
					= 	&\E\big[\big(x + \int_0^t b_{1,n}(u, X^{x,n}_u(\omega+ \varphi))+ b_{2,n}(  X^{x,n}_u(\omega+\varphi))b_3(u,\omega+\varphi)\diffns u \notag\\
					& + \sigma B_t(\omega+\varphi)\big)\mathfrak{H}\big]\\
					=	& \E\big[\big(x + \int_0^t b_{1,n}(u, \tilde X^{x,n}_u(\omega))+  b_{2,n}(\tilde X^{x,n}_u(\omega))\tilde b_3(u,\omega)+ \sigma\dot\varphi\diffns u  + \sigma B_t(\omega)\big)\mathfrak{H}\big].
				\end{align*} 
				
				Similarly, since $X^x$ solves the limiting SDE under a probability $\mathbb{Q}\sim\mathbb{P}$ (see Lemma~\ref{lem:weak existence}), we define $\tilde X^x_t(\omega):=X^x_t(\omega+\varphi)$, which satisfies
				\begin{equation}
					\diffns \tilde X^{x}_t
					= \big(b_{1}(t,\tilde X^{x}_t)
					+ b_2(\tilde X^{x}_t)\tilde b_3(t,\omega)
					+ \sigma\dot\varphi_t\big)\diffns t
					+ \sigma\diffns B_t,\quad \mathbb{P}\text{-a.s.}
				\end{equation}
				Define
				\begin{equation}\label{eqtildeu1}
					\tilde u_{i,n}
					:= \frac{\sigma_i}{\sigma_1^2+\cdots+\sigma_d^2}
					\big(b_{1,n}+b_{2,n}\tilde b_3\big)
					=: b_{1,n}^{\sigma_i}+b_{2,n}^{\sigma_i}\tilde b_3,
					\quad
					\tilde u_i:= b_{1}^{\sigma_i}+b_{2}^{\sigma_i}\tilde b_3.
				\end{equation}
				By Girsanov’s theorem and properties of conditional expectation,
				\begin{align}\label{eqweaklim2}
					&\E\big[
					h(X^{x,n}_{t})\mathcal{E}\big(\int_0^t\dot\varphi_r\diffns B_r\big)
					-\E[h(X^x_t)\mid\mathcal{F}_t]\mathcal{E}\big(\int_0^t\dot\varphi_r\diffns B_r\big)
					\big]\notag\\
					&= \E\big[
					h(x+\sigma\cdot B_t)
					\big(
					\mathcal{E}\big(\int_0^t(\tilde u_n+\dot\varphi)\diffns B_r\big)
					-\mathcal{E}\big(\int_0^t(\tilde u+\dot\varphi)\diffns B_r\big)
					\big)
					\big].
				\end{align}

				Using the inequality $|e^a - e^b|\le |e^a + e^b||a - b|$, together with H\"older’s and Burkholder–Davis–Gundy inequalities, we find
				\begin{align}\label{eqweaklim3}
					&	\E\big[h(X^{x,n}_{t})\mathcal{E}\big(\int_0^t\dot\varphi_r\diffns B_r\big)-\E\big[h(X^x_t)|\mathcal{F}_t\big]\mathcal{E}\big(\int_0^t\dot\varphi_r\diffns B_r\big)\big]\notag\\
					\leq 	&C \E\big[h(x+\sigma\cdot B_{t})^2\big]^{\frac{1}{2}}E\big[\big(\mathcal{E}\big(\int_0^t\big\{\tilde u_n(r,x+\sigma \cdot B_r,\omega)+\dot\varphi_r\big\}\diffns B_r\big)\notag\\
					&+\mathcal{E}\big(\int_0^t\big\{\tilde u(r,x+\sigma \cdot B_r,\omega)+\dot\varphi_r\big\}\diffns B_r\big)\big)^4\big]^{\frac{1}{4}}\notag\\
					&\times\big\{\E\big[ \big(\int_0^t\big(\tilde u_n(r,x+\sigma \cdot B_r,\omega)- \tilde u(r,x+\sigma \cdot B_r,\omega)\big)\diffns B_r\big)^4\big]\notag\\
					&\quad +\E\big[\big(\int_0^t\|\tilde u_n(r,x+\sigma \cdot B_r,\omega)+\dot\varphi(t)\|^2	-\|\tilde u(r,x+\sigma \cdot B_r,\omega)+\dot\varphi_r\|^2\diffns r\big)^4\big] \big\}^{\frac{1}{4}}\notag\\
					=&I_1\times I_{2,n}\times (I_{3,n}+I_{4,n}).
				\end{align}
				The finiteness of $I_1$ follows from \eqref{eqweaklim1}.  
				Moreover, $I_{2,n}$ is uniformly bounded, since $\dot\varphi$, $b_{1,n}$, and $b_{2,n}$ are bounded, and $b_3$ satisfies a Novikov-type condition.  
				Finally, by the dominated convergence theorem, $I_{3,n}$ and $I_{4,n}$ both vanish as $n\to\infty$.  
				This completes the proof.
			\end{proof}
			
			\begin{lemm}[Strong $L^2$ convergence]
				\label{prop:convXn}
				Let $X^n_\cdot$ denote the strong solution of the SDE driven by the vector field $b_n$ (as given in Lemma \ref{lem:weak conv weak sol}), and let $X_\cdot$ be a weak solution of the SDE associated with $b$, defined on the same probability space.  
				Then, the sequence $\{X^n_t\}_{n \ge 1}$ satisfies
				$$
				X^n_t \rightharpoonup \E\big[X_t\mid \mathcal{F}_t\big]
				\quad \text{strongly in } L^2(\Omega,  \mathbb{P})
				\quad \text{as } n \to \infty.
				$$
			\end{lemm}
			
			\begin{proof}
				
				Observe that, by the compactness criterion, for each fixed $t$, there exists a subsequence $(X^{x,n_k}_t)_{k\geq 1}$ that converges strongly in $L^2(\Omega,\mathbb{P})$.  
				From Lemma~\ref{lem:weak conv weak sol}, setting $h(x)=x$ for $x\in\mathbb{R}$, it follows that $(X^{x,n_k}_t)_{k\geq 1}$ converges weakly to $\mathbb{E}\big[X^x_t\,\big|\,\mathcal{F}_t\big]$ in $L^2(\Omega,\mathbb{P})$.  
				Hence, by the uniqueness of the weak limit, there exists a subsequence $(n_k)$ such that $(X^{x,n_k}_t)_{k\geq 1}$ converges strongly to $\mathbb{E}\big[X^x_t\,\big|\,\mathcal{F}_t\big]$ in $L^2(\Omega,\mathbb{P})$.  
				By uniqueness of the limit, this convergence must hold for the entire sequence. 
				
				Indeed, suppose for contradiction that there exists $\varepsilon > 0$ such that a subsequence $(n_\ell)_{\ell\geq 1}$ satisfies
				$$
				\|X_t^{x,n_\ell} - \mathbb{E}[X^x_t\,|\,\mathcal{F}_t]\|_{L^2(\Omega,\mathbb{P})} \ge \varepsilon.
				$$
				By the compactness criterion, there exists a further subsequence $(n_{m})_{m\geq 1}$ of $(n_\ell)_{\ell\geq 1}$ such that 
				$$
				X_t^{x,n_{m}} \to \tilde{X}_t \quad \text{strongly in } L^2(\Omega,\mathbb{P}) \text{ as } m \to \infty.
				$$
				However, since $(X^{x,n_k}_t)_{k\geq 1}$ converges weakly to $\mathbb{E}\big[X^x_t\,|\,\mathcal{F}_t\big]$ in $L^2(\Omega,\mathbb{P})$, the uniqueness of the weak limit implies that 
				$$
				\tilde{X}_t = \mathbb{E}\big[X^x_t\,|\,\mathcal{F}_t\big].
				$$
				This contradicts the assumption that 
				$$
				\|X_t^{x,n_{m}} - \mathbb{E}[X^x_t\,|\,\mathcal{F}_t]\|_{L^2(\Omega,\mathbb{P})} \ge \varepsilon.
				$$
				Hence, the entire sequence $(X_t^{x,n})_{n\geq 1}$ converges strongly to $\mathbb{E}\big[X^x_t\,|\,\mathcal{F}_t\big]$ in $L^2(\Omega,\mathbb{P})$.

			\end{proof}

			\textbf{Step 3:} We show that $X_t$ is $\mathcal{F}_t$-measurable, for all $t\in[0,T]$ implying that \(X^x\) is a strong solution. Furthermore, we prove that this solution is pathwise unique.
			
			\begin{lemm}[Strong solution and pathwise uniqueness]
				\label{thm:adapted}
				For every $t\in[0,T]$, the weak solution $X_t$ to the SDE \eqref{eq:SDE sumprod} is $\mathcal{F}_t$-measurable, for every $t \in [0,T]$. Furthermore, the solution is pathwise unique.
			\end{lemm}
			\begin{proof}
				We first now show that $X_t$ is $\mathcal{F}_t$-measurable.  
				Let $h:\mathbb{R}\to\mathbb{R}$ be defined by $h(x)=x^2$.  
				By Lemma \ref{prop:convXn}, there exists a subsequence $(X^{n_k}_t)_{k\geq 0}$ such that 
				$$
				\lim_{k\to\infty} h(X^{n_k}_t) = h\big(\mathbb{E}[X_t|\mathcal{F}_t]\big),
				\quad \mathbb{P}\text{-a.s.}
				$$
				Moreover, by Lemma~\ref{lem:weak conv weak sol}, the sequence $(h(X^{n_k}_t))_{k}$ converges weakly in $L^2(\Omega,\mathcal{F},\mathbb{P})$ to 
				$\mathbb{E}[h(X_t)|\mathcal{F}_t]$.  
				By uniqueness of the limit in $L^2$, we obtain
				$$
				h\big(\mathbb{E}[X_t|\mathcal{F}_t]\big)
				= \mathbb{E}[h(X_t)|\mathcal{F}_t],
				\quad \mathbb{P}\text{-a.s.}
				$$
				That is,
				\begin{equation}\label{eqtrans1}
					\big(\mathbb{E}[X_t|\mathcal{F}_t]\big)^2 = \mathbb{E}[X_t^2|\mathcal{F}_t],
					\quad \mathbb{P}\text{-a.s.}
				\end{equation}
				Using repeatedly ~\eqref{eqtrans1} together with the tower property of conditional expectation
				\begin{align*}
					\mathbb{E}\big[(X_t - \mathbb{E}[X_t|\mathcal{F}_t])^2\big]
					&= \mathbb{E}[X_t^2]
					+ \mathbb{E}\big[(\mathbb{E}[X_t|\mathcal{F}_t])^2\big]
					- 2\,\mathbb{E}\big[X_t\,\mathbb{E}[X_t|\mathcal{F}_t]\big] \\
					&= \mathbb{E}[X_t^2]
					+ \mathbb{E}\big[\mathbb{E}[X_t^2|\mathcal{F}_t]\big]
					- 2\,\mathbb{E}\big[\mathbb{E}[X_t\,\mathbb{E}[X_t|\mathcal{F}_t]\,|\,\mathcal{F}_t]\big] \\
					&= 2\,\mathbb{E}[X_t^2]
					- 2\,\mathbb{E}\big[(\mathbb{E}[X_t|\mathcal{F}_t])^2\big]
					= 0.
				\end{align*}
				Therefore $X_t$ is $\mathcal{F}_t$-measurable.
				
				Let us now show that the solution is pathwise unique. Let $X_t^1$ and $ X_t^2$ be two solutions to the SDE \eqref{eq:SDE sumprod}. 	
				As shown on the course of the proof of Lemma \ref{lem:weak conv weak sol}, for every $\varphi \in C^1_b([0,t],\mathbb{R}^d)$, the process $\tilde X^i_t(\omega):= X^{i}_t(\omega +\varphi)$, $i=1,2$ satisfies the SDE
				\begin{equation}
					\label{eq:weaksol}
					\diffns \tilde X^{i}_t = (b_1(t,\tilde X^{i}_t)+b_{2}(\tilde X^{i}_t)\cdot\tilde b_3(t,\omega) + \sigma\dot\varphi_t)\diffns t + \sigma\diffns B_t,
				\end{equation}
				with $\tilde b_3(t,\omega) = b_3(t,\omega+\varphi)$.
				Since the function $\tilde b(t,\omega,x) =b_1(t,x)+ b_2(x) \tilde b_3(t,\omega)$ satisfies the Novikov condition, it follows from \cite[Proposition 5.3.6]{KaShr88}, that  $(\tilde X^1_t(\omega),B)$ and $(\tilde X^2_t(\omega),B)$ are two weak solutions to \eqref{eq:weaksol}.  Let $\mathbb{P}^i$ be the distribution of $(\tilde X^i_t(\omega),B),\, i=1,2$. Observe that by Assumption \ref{mainassum} and the boundedness of $\varphi$, we have $\mathbb{P}^i\big(\int_0^T|\tilde b(t,\tilde X^i_t,\omega)+\sigma \cdot\varphi_t|^2\diffns t<\infty\big)=1, i=1,2$ and thus  by \cite[Proposition 5.3.10]{KaShr88},  $(X^1_t(\omega+\varphi),B)$ and $(\tilde X^2_t(\omega+\varphi),B)$ have the same distribution.
				
				Hence, we have
				\begin{align*}
					\E\big[ X^1_t\mathcal{E}\big(\int_0^T\dot\varphi_u\diffns B_u\big)\big]=& \int_\Omega X_t^1(\omega +\varphi)\diffns \mathbb{P}^1(\omega) = \int_\Omega X_t^2(\omega +\varphi)\diffns \mathbb{P}^2(\omega)\\
					=& \E\big[X^2_t\mathcal{E}\big(\int_0^T\dot\varphi_u\diffns B_u\big)\big],
				\end{align*}
				which shows that $X^1_t = X^2_t$ for every $t$.
				Since the processes $X^1$ and $X^2$ have continuous paths, we conclude that they are indistinguishable.
			\end{proof}
			
			The proof of Theorem \ref{thm:sumprod} is then completed. In the next subsection we show that the solution to the SDE \eqref{eqconpb1} admits a Sobolev differentiable stochastic flow.

			\subsubsection{Proof of Theorem \ref{thm:sumprod1} and related result}\label{proofteosobdif}
			We start by showing uniform bound of the first variation of the approximating sequence of strong solution. Its analysis is similar to that of the Malliavin derivative considered above previously.

			\begin{proof}[Proof of Proposition \ref{pro:bound derivative}]
				The differentiability of the trajectories of $x \mapsto X^{s,x,n}_t$ follows from the seminal work \cite{Kun90}, from which we further obtain that $\partial_xX^{s,x,n}_u$ satisfies
				\begin{align}\label{eqexplifirstvar1}
					\partial_xX^{s,x,n}_t = &1 + \int_s^t\big\{\partial_xb_{1,n}(u, X^{s,x,n}_u) + b_{2,n}'( X^{s,x,n}_u)b_3(u,\omega)\big\}\partial_xX^{s,x,n}_u\diffns u\notag\\
					=& \exp\Big(\int_s^t\big\{\partial_xb_{1,n}(u, X^{s,x,n}_u) + b_{2,n}'( X^{s,x,n}_u)b_3(u,\omega)\big\}\diffns u\Big).
				\end{align}
				Thanks to Lemma \ref{lem:auxlemma}, there exists a positive constants $\tilde{C}(p,\sigma,\|b\|_\infty)$  such that 
				\begin{align*}
					\E\left[|\partial_x X^{s,x,n}_t|^p \right] &\le \tilde{C}(p,T,\sigma,\|b\|_\infty).
				\end{align*}
				Since the right side of the above bound does not depend on $x$ the required result follows. 
			\end{proof}

			\begin{proof}[Proof of Corollary \ref{corholdx}]
				Assume without loss of generality that $s\le t_1 <t_2$.
				Then, for every $n \in \mathbb{N}$, by applying the fundamental theorem of calculus, we have
				
				\begin{align*}
					X^{s,x_1,n}_{t_1}-X^{s,x_2,n}_{t_2}
					=&	X^{s,x_1,n}_{t_1}-X^{s,x_2,n}_{t_1}+	X^{s,x_2,n}_{t_1}-X^{s,x_2,n}_{t_2}\\
					=&\int_{y}^{x}\frac{\partial}{\partial z}X_{t_2}^{z,n}\diff z +\int_{t_1}^{t_2}b_{1, n}(u, X^{s,x_2,n}_u) + b_{2,n}(X^{s,x_2,n}_u)b_3(u, \omega)\diffns u\\
					&+ \sigma\cdot B_{t_1} - \sigma\cdot B_{t_2}\\
					=&\int_{y}^{x}\frac{\partial}{\partial z}X_{t_2}^{z,n}\diff z +\int_{t_1}^{t_2}b_{1, n}(u, X^{s_1,x_2,n}_u) + b_{2,n}(X^{s_1,x_2,n}_u)b_3(u, \omega)\diffns u\\
					& +\sigma\cdot (B_{t_1} -  B_{t_2}).
				\end{align*}
				Therefore, taking the $p$-power, the expectation and using H\"older inequality, the using the boundedness of $b_n$ and Proposition \ref{pro:bound derivative} give:
				\begin{align}\label{eq:lastintx}
					&	\E\big[\big|	X^{s,x_1,n}_{t_1}-X^{s,x_2,n}_{t_2}\big|^p\big]\notag\\
					=&|x-y|^{p-1}\int_{y}^{x}	\E\big[\big|\frac{\partial}{\partial z}X_{t_2}^{z,n}\big|^p\big]\diff z +\sigma\E\big[\big|B_{t_1} -  B_{t_2}\big|^p\big]\notag\\ &+(t_2-t_1)^{p-1}\int_{t_1}^{t_2}	\E\big[\big|b_{1, n}(u, X^{s,x_2,n}_u) + b_{2,n}(X^{s,x_2,n}_u)b_3(u, \omega)\big|^p\big]\diffns u\notag\\
					\leq &(|x-y|^{p}\sup_{z\in \R}\E\big[\big|\frac{\partial}{\partial z}X_{t_2}^{z,n}\big|^p\big]+C((t_2-t_1)^{p}+(t_1-t_2)^{p/2})\notag\\
					\leq &\mathcal{C}_p(\sigma,|| b||_\infty, T)(|x_1-x_2|^{p}+(t_2-t_1)^{p/2}).
				\end{align}
				Since $(X^{s, x_i, n}_{t_i})$ converges weakly to the unique solution $X^{s,x_i}_{t_i}$ of the SDE \eqref{eq:sde into} with drift $b$, (see Lemma \ref{lem:weak conv weak sol} and Theorem \ref{thm:adapted}) it follows by convexity and lower\\-semicontinuity of $K\mapsto \E[|K|^p]$ that, taking the limit in \eqref{eq:lastintx} yields the desired result.
			\end{proof}

			\begin{proof}[Proof of Proposition \ref{theosup1}]
				Without loss of generality, set $T=1$.	We start by applying Theorem \ref{LemGRR} for $K=[0,1], \mathcal{D}(t,s) = |t-s|^{\frac{\epsilon}{1+\epsilon}} , 0 <\epsilon< 1, \Psi(x) = x^{\frac{4(1+\epsilon)}{\epsilon}}, x \geq 0$
				$ f (t) = |X^x_t-X^y_t|, x, y \in \mathbb{R}$. Then, $\widehat\sigma(r) \geq r^{\frac{1+\epsilon}{\epsilon}}$, and we get that setting $s=0$ in \eqref{eqGRR1} and using triangle inequality
				\begin{align}\label{eqGRR2}
					|f(t)|\leq& 18 \int_0^{\frac{\mathcal{D}(t,0)}{2}}\Psi^{-1}\big(\frac{U}{\widehat\sigma^2(r)}\big)\diff r +|f(0)|
					=18 \int_0^{\frac{\mathcal{D}(t,0)}{2}}\Psi^{-1}\big(\frac{U}{\widehat\sigma^2(r)}\big)\diff r +|x-y|.
				\end{align}
				Choose $p>0$. Then 
				\begin{align*}
					|f(t)|^p\leq& C_p\Big\{\Big( \int_0^{\frac{\mathcal{D}(t,0)}{2}}\Psi^{-1}\big(\frac{U}{\widehat\sigma^2(r)}\big)\diff r\Big)^p +|x-y|^p\Big\}.
				\end{align*}	
				Taking the supremum on both sides gives
				\begin{align*}
					\sup_{0\leq t\leq 1}|f(t)|^p\leq& C_p\Big\{\Big( \int_0^{1}\Psi^{-1}\Big(\frac{U}{\widehat\sigma^2(r)}\Big)\diff r\Big)^p +|x-y|^p\Big\}\\
					\leq &  C_p\Big\{\Big( \int_0^{1}\Big(\frac{U}{\widehat\sigma^2(r)}\Big)^{\frac{\epsilon}{4(1+\epsilon)}}\diff r\Big)^p +|x-y|^p\Big\}\\
					\leq &  C_p\Big\{\Big( \int_0^{1}\big(\frac{1}{r^{\frac{2(1+\epsilon)}{\epsilon}}}\big)^{\frac{\epsilon}{4(1+\epsilon)}}\diff r\Big)^pU^{p\frac{\epsilon}{4(1+\epsilon)}} +|x-y|^p\Big\}\notag\\
					\leq & C_p\big(U^{p\frac{\epsilon}{4(1+\epsilon)}} +|x-y|^p\big).
				\end{align*}	
				Using the definition of $U$ in Theorem \ref{LemGRR}, the choice of $\Psi$, and choosing in particular $p\geq 5$ such that $p\frac{\epsilon}{4(1+\epsilon)}>1$, we have 
				\begin{align*}
					U^{p\frac{\epsilon}{4(1+\epsilon)}}\leq \int_0^1\int_0^1\Psi\Big(\Big(\frac{|f(t)-f(s)|}{\mathcal{D}(s,t)}\Big)\Big)^{p\frac{\epsilon}{4(1+\epsilon)}}\diff s \diff t\leq \int_0^1\int_0^1\Big(\frac{|f(t)-f(s)|}{\mathcal{D}(s,t)}\Big)^{p}\diff s \diff t.
				\end{align*}
				Thus
				\begin{align*}
					\mathbb{E}\Big[\sup_{0\leq t\leq 1}|f(t)|^p\Big]\leq&
					C_p\Big(\int_0^1\int_0^1\mathbb{E}\Big[\Big(\frac{|f(t)-f(s)|}{\mathcal{D}(s,t)}\Big)^{p}\Big]\diff s \diff t +|x-y|^p\Big)\\
					\leq&
					C_p\Big(\int_0^1\int_0^1\mathbb{E}\Big[\Big(\frac{|X^x_t-X^y_t-(X^x_s-X^y_s)|}{\mathcal{D}(s,t)}\Big)^{p}\Big]\diff s \diff t +|x-y|^p\Big)	.	 
				\end{align*}
				Let $(X^{x,n})_{n\geq 1}$ be the solution to the SDE \eqref{eqconpb1} with $b$ replaced by the approximating sequence $(b_n)_{n\geq 1}$. Then by Fatou's lemma we have 
				\begin{align*}
					\mathbb{E}\Big[\sup_{0\leq t\leq 1}|f(t)|^p\Big]
					\leq&
					C_p\Big(\liminf_{n\mapsto \infty}\int_0^1\int_0^1\mathbb{E}\Big[\Big(\frac{|X^{x,n}_t-X^{y,n}_t-(X^{x,n}_s-X^{y,n}_s)|}{\mathcal{D}(s,t)}\Big)^{p}\Big]\diff s \diff t \notag\\
					&+|x-y|^p\Big)		. 
				\end{align*}
				
				Let $x,y\in \R$ and assume without loss of generality that $x > y$. Then, by applying the fundamental theorem of calculus, we have
				\begin{align*}
					X^{x,n}_t-X^{y,n}_t
					=&\int_{y}^{x}\frac{\partial}{\partial z}X_t^{z,n}\diff z.
				\end{align*}
				Therefore
				\begin{align*}
					X^{x,n}_{t_2}-X^{y,n}_{t_2}-(X^{x,n}_{t_1}-X^{y,n}_{t_1})
					=&\int_{y}^{x}\Big(\frac{\partial}{\partial z}X_{t_2}^{z,n}-\frac{\partial}{\partial z}X_{t_1}^{z,n}\Big)\diff z.
				\end{align*}
				Dividing both sides by $\mathcal{D}$, taking the $p$-power, taking the expectation, and using H\"older inequality
				\begin{align*}
					&	\mathbb{E}\Big[	\Big(\frac{|X^{x,n}_{t_2}-X^{y,n}_{t_2}-(X^{x,n}_{t_1}-X^{y,n}_{t_1})|}{\mathcal{D}(t_1,t_2)}\Big)^p\Big]\\
					\leq& C_p\Big(|x-y|^{p-1}\int_{y}^{x}	\mathbb{E}\Big[\Big(\frac{\big|\frac{\partial}{\partial z}X_{t_2}^{z,n}-\frac{\partial}{\partial z}X_{t_1}^{z ,n}\big|}{\mathcal{D}(t_1,t_2)}\Big)^p\Big]\diff z\\
					\leq& C_p |x-y|^{p}\sup_{x\in \mathbb{R}}\mathbb{E}\Big[\Big(\frac{\big|\frac{\partial}{\partial x}X_{t_2}^{x,n}-\frac{\partial}{\partial x}X_{t_1}^{x ,n}\big|}{\mathcal{D}(t_1,t_2)}\Big)^p\Big].
				\end{align*}
				Then we get
				\begin{align}\label{eqfor30}
					&\mathbb{E}\big[\sup_{0\leq t\leq 1}|f(t)|^p\big]\notag\\
					\leq&
					C_{p}|x-y|^{p}\Big(\liminf_{n\mapsto \infty}\int_0^1\int_0^1\sup_{x\in \mathbb{R}}\mathbb{E}\Big[\Big(\frac{\big|\frac{\partial}{\partial x}X_{t_2}^{x,n}-\frac{\partial}{\partial x}X_{t_1}^{x ,n}\big|}{\mathcal{D}(t_1,t_2)}\Big)^p\Big]\diff t_1 \diff t_2 +1\Big).		 
				\end{align}

				Using \eqref{eqexplifirstvar1}, 
			for $t_1<t_2$, we have
			\begin{align*}
				\frac{\partial}{\partial x}X_{t_2}^{x,n}-\frac{\partial}{\partial x}X_{t_1}^{x,n} =&e^{\int_0^{t_2}\big\{\partial_x b_{1,n}(u,X^{x,n}_u)+ b_{2,n}^\prime(X^{x,n}_u) b_3(u,\omega)\big\} \diffns u}\\
				&-e^{\int_0^{t_1}\big\{\partial_x b_{1,n}(u,X^{x,n}_u)+ b_{2,n}^\prime(X^{x,n}_u) b_3(u,\omega)\big\} \diffns u},
			\end{align*}
			
			Taking the $p$-power and the expectation on both sides and using once more the inequality $|e^a-e^b|\leq |a-b||e^a+e^b|$, yields  
			\begin{align}\label{eqfor41} 
				&	\mathbb{E}\big[\big|	\frac{\partial}{\partial x}X_{t_2}^{x,n}-\frac{\partial}{\partial x}X_{t_1}^{x,n}\big|^p\big]\notag\\
				=&\mathbb{E}\big[\big|\int_{t_1}^{t_2}\big\{\partial_x b_{1,n}(u,X^{x,n}_u)+ b_{2,n}^\prime(X^{x,n}_u) b_3(u,\omega)\big\} \diffns u\big|^{2p}\big]^{1/2}\notag\\
				&\times	\mathbb{E}\big[\big|e^{\int_0^{t_2}\big\{\partial_x b_{1,n}(u,X^{x,n}_u)(u,X_u)+ b_{2,n}^\prime(X^{x,n}_u)\cdot b_3(u,\omega)\big\} \diffns u}\notag\\
				&	+e^{\int_0^{t_1}\big\{\partial_x b_{1,n}(u,X^{x,n}_u)+ b_{2,n}^\prime(X^{x,n}_u) b_3(u,\omega)\big\} \diffns u}\big|^{2p}\big]^{1/2}	\leq C(p,\|b\|_\infty)|t_2-t_1|^{p/2},
			\end{align}
			where the last equality follows by using \cite[Proposition 3.7 ]{MMNPZ13}, Proposition \ref{lem:auxlemma}  and some previous computations.	Since the right side of the inequality does not depend on $x$, it holds
			\begin{align*}
				\sup_{x\in \mathbb{R}}	\mathbb{E}\big[\big|	\frac{\partial}{\partial x}X_{t_2}^{x,n}-\frac{\partial}{\partial x}X_{t_1}^{x,n}\big|^p\big]
				\leq &C(p,\|b\|_\infty)|t_2-t_1|^{p/2}.
			\end{align*}

			Substituting this into \eqref{eqfor30}, we have 
			\begin{align*}
				\mathbb{E}\big[\sup_{0\leq t\leq 1}|f(t)|^p\big]
				\leq&
				C(p,\|b\|_\infty)|x-y|^{p}\Big(\int_0^1\int_0^1|t_2-t_1|^{p(\frac{1}{2}-\frac{\epsilon}{1+\epsilon})}\diff t_1 \diff t_2 +1\Big)	\notag\\
				\leq&	 C(p,\|b\|_\infty)|x-y|^{p}.
			\end{align*}	
			
			Thus for $p\geq 5$ with $p\frac{\epsilon}{4(1+\epsilon)}>1$, we have
			
			\begin{align}\label{eqfor42}
				\mathbb{E}\big[\sup_{0\leq t\leq 1}|X^x_t-X^y_t|^p\big]
				\leq	 C(p,\|b\|_\infty)|x-y|^{p}.
			\end{align}	
			Let $q>1$. Choose $p>1$ such that $qp\frac{\epsilon}{4(1+\epsilon)}>1$. Then
			\begin{align*}
				\mathbb{E}\big[\sup_{0\leq t\leq 1}|X^x_t-X^y_t|^q\big]\leq &	\mathbb{E}\big[\sup_{0\leq t\leq 1}|X^x_t-X^y_t|^{qp}\big]^{1/p}
				\leq	 C(q,d,\|b\|_\infty)|x-y|^{q}.
			\end{align*}	
			This concludes the proof 
		\end{proof}

		We conclude this section with the proof of the Sobolev differentiability.
		
		\begin{proof}[Proof of Theorem \ref{thm:sumprod1}]
			Let $p\ge2$.
			In other to show that $x\mapsto X^{s,x}$ is weakly differentiable, we start by showing that the sequence $(\partial_xX^{s,x,n})_n$ is bounded in $L^2(\Omega, L^p(\mathbb{R},w))$. This follows from Proposition \ref{pro:bound derivative} (see for e.g \cite{MNP2015}). Thus,\\ $(\partial_xX^{s,x,n}_t)$ admits a weakly converging subsequence $(\partial_xX^{s,x,n_k}_t)$ in $L^2(\Omega, L^p(\mathbb{R},w))$ to a limit $Y^{s,x}_t$. The remaining part of the proof follows as in \cite{MNP2015}. 
		\end{proof}
		We now turn to the proof of the maximum principle
		
		\subsection{Proof of Necessary and Sufficient Maximum Principle}
		We first start by giving the proof of the necessary maximum principle. 
		
		\subsubsection{Proof of of Theorem \ref{thm:necc}}\label{proofteonecmaxp} 
		\begin{lemm}
			\label{lem:conv.Xnn}
			We have the following bounds: 
			\begin{itemize}
				\item[(i)]For every sequence $(\alpha^n)_n$ in $\mathcal{A}$, it holds $\underset{n}{\sup} \E\big[\underset{t\in[0,T]}{\sup}|X^{n,\alpha^n}_t|^2 \big]<\infty$.
				\item[(ii)] For every $\alpha^1,\alpha^2 \in \mathcal{A}$ it holds that for all $p\geq 2$
				\begin{align*}
					&	\mathbb{E}\big[\sup_{0\leq t\leq 1}|X^{n,\alpha^1}_t-X^{\alpha^2}_t |^p\big]\\
					\leq&C(q,\|b\|_\infty)\Big\{	\E\big[\underset{0\leq t\leq 1}\sup|\alpha^1_s-\alpha^2_s|^{2p}\big]^{1/2}\\
					&	+ \Big(\int_0^1\frac{1}{\sqrt{2\pi s}}e^{\frac{|x|^2}{2s}}\int_{\mathbb{R}}\big|b_{1,n} (s,\sigma y)- b_1 (s,\sigma y)\big|^{2p}e^{-\frac{|y|^2}{4s}}\diffns y\diff s\Big)^{\frac{1}{2}}\notag\\
					&+\Big(\int_0^1\frac{1}{\sqrt{2\pi s}}e^{\frac{|x|^2}{2s}}\int_{\mathbb{R}}\big|b_{2,n} (\sigma y)-b_{2} (\sigma y)\big|^{4p}e^{-\frac{|y|^2}{4s}}\diffns y\diff s\Big)^{\frac{1}{4}}\Big\}.
				\end{align*}	
				\item[(iii)]Given $k \in \mathbb{N}$, for every sequence $(\alpha^n)_{n\geq 1}$ in $\mathcal{A}$ and $\alpha\in \mathcal{A}$ such that $\delta(\alpha^n,\alpha)\rightarrow 0$, it holds that 
				$$ \E\big[| X^{\alpha^n,k}_t - X^{\alpha,k}_t|^2 \big] \to 0.$$
			\end{itemize}
		\end{lemm}
		\begin{proof}
			The proof of $(i)$ is straightforward, thanks to Assumption \ref{mainassum20}. 	We use the Garsia-Rodemich-Rumsey theorem (see Theorem \ref{LemGRR}) to prove $(ii)$.	Without loss of generality, set $T=1$.	We start by applying Theorem \ref{LemGRR} for $K=[0,1], \mathcal{D}(t,s) = |t-s|^{\frac{\epsilon}{1+\epsilon}} , 0 <\epsilon< 1, \Psi(z) = z^{\frac{4(1+\epsilon)}{\epsilon}}, z \geq 0$
			$ f (t) = |X^{\alpha^1,n}_t-X^{\alpha^2}_t|, \alpha^1,\alpha^2 \in \mathcal{A}$ . Then, $\widehat\sigma(r) \geq r^{\frac{1+\epsilon}{\epsilon}}$, and we get that setting $s=0$ in \eqref{eqGRR1} and using triangle inequality
			\begin{align}\label{eqGRR20}
				|f(t)|\leq& 18 \int_0^{\frac{\mathcal{D}(t,0)}{2}}\Psi^{-1}\Big(\frac{U}{\widehat\sigma^2(r)}\Big)\diff r+|f(0)|= 18 \int_0^{\frac{\mathcal{D}(t,0)}{2}}\Psi^{-1}\Big(\frac{U}{\widehat\sigma^2(r)}\Big)\diff r .
			\end{align}
			Choose $p\geq 1$. Then 
			\begin{align*}
				|f(t)|^p\leq& C_p\Big( \int_0^{\frac{\mathcal{D}(t,0)}{2}}\Psi^{-1}\Big(\frac{U}{\widehat\sigma^2(r)}\Big)\diff r\Big)^p.
			\end{align*}	
			Taking the supremum on both sides gives
			\begin{align*}
				\sup_{0\leq t\leq 1}|f(t)|^p\leq& C_p\Big( \int_0^{1}\Psi^{-1}\Big(\frac{U}{\widehat\sigma^2(r)}\Big)\diff r\Big)^p
				\leq   C_p\Big( \int_0^{1}\Big(\frac{U}{\widehat\sigma^2(r)}\Big)^{\frac{\epsilon}{4(1+\epsilon)}}\diff r\Big)^p \\
				\leq &  C_p\Big( \int_0^{1}\Big(\frac{1}{r^{\frac{2(1+\epsilon)}{\epsilon}}}\Big)^{\frac{\epsilon}{4(1+\epsilon)}}\diff r\Big)^pU^{p\frac{\epsilon}{4(1+\epsilon)}}
				\leq   C_pU^{p\frac{\epsilon}{4(1+\epsilon)}} .
			\end{align*}	
			Using once more the definition of $U$ in Theorem \ref{LemGRR}, the choice of $\Psi$, and choosing in particular $p\geq 5$ such that $p\frac{\epsilon}{4(1+\epsilon)}>1$, we have 
			\begin{align*}
				U^{p\frac{\epsilon}{4(1+\epsilon)}}\leq \int_0^1\int_0^1\Psi\Big(\Big(\frac{|f(t)-f(s)|}{\mathcal{D}(s,t)}\Big)\Big)^{p\frac{\epsilon}{4(1+\epsilon)}}\diff s \diff t\leq \int_0^1\int_0^1\Big(\frac{|f(t)-f(s)|}{\mathcal{D}(s,t)}\Big)^{p}\diff s \diff t.
			\end{align*}
			Thus
			\begin{align}\label{eqsubGRR1}
				\mathbb{E}\Big[\sup_{0\leq t\leq 1}|f(t)|^p\Big]\leq&
				C_p\int_0^1\int_0^1\mathbb{E}\Big[\Big(\frac{|f(t)-f(s)|}{\mathcal{D}(s,t)}\Big)^{p}\Big]\diff s \diff t \notag \\
				\leq&
				C_p(\int_0^1\int_0^1\mathbb{E}\Big[\Big(\frac{|X^{\alpha^1,n}_t-X^{\alpha^2}_t-(X^{\alpha^1,n}_s-X^{\alpha^2}_s)|}{\mathcal{D}(s,t)}\Big)^{p}\Big]\diff s \diff t		 .
			\end{align}
			
			Adding and subtracting the same term and using the fundamental theorem of calculus, and solving first-order non-homogeneous linear ODE, we arrive at
			\begin{align*}
				&	X^{n,\alpha^1}_t-X^{\alpha^2}_t \\
				= &\int_0^t\big\{\int_0^1\partial_x b_{1,n} ( \Lambda_n(\lambda,u))\mathrm{d}\lambda+b_3(u,\alpha^1_u)\int_0^1b^\prime_{2,n}( \Lambda_n(\lambda,u)) \mathrm{d}\lambda\big\}(X^{n,\alpha^1}_u-X^{\alpha^2}_u)\mathrm{d}u\\
				&+ \int_0^t \big\{b_{1,n} (X^{\alpha^2}_u )-b_1(X^{\alpha^2}_u )\big\}\mathrm{d}u + \int_0^t b_3(u, \alpha^1_u)\big\{b_{2,n}(X^{\alpha^2}_u )-b_{2}(X^{\alpha^2}_u )\big\}\mathrm{d}u\\
				&+ \int_0^tb_{2}(X^{\alpha^2}_u )\big\{b_3(u, \alpha^1_u)-	b_3(u, \alpha^2_u )
				\big\}\diff u\\
				= &\int_0^t \exp\big(\int_u^t\big\{\int_0^1\partial_xb_{1,n} ( \Lambda_n(\lambda,r))\mathrm{d}\lambda+b_3(r,\alpha_1)\int_0^1b^\prime_{2,n}( \Lambda_n(\lambda,r)) \mathrm{d}\lambda\big\}\mathrm{d}r\big)\\
				&\times\big(\big\{b_{1,n} (X^{\alpha^2}_u )-b_1(X^{\alpha^2}_u )\big\} +  b_3(u, \alpha^1)\big\{b_{2,n}(X^{\alpha^2}_u )-b_{2}(X^{\alpha^2}_u )\big\}\\
				&+b_{2}(X^{\alpha^2}_u )\big\{b_3(u, \alpha^1_u)-	b_3(u, \alpha^2_u )
				\big\}\big)\diff u,
			\end{align*}
			where $\Lambda_n(\lambda,t)$ is the process given by $\Lambda_n(\lambda,t):= \lambda X^{n,\alpha^1}_t + (1 - \lambda)X^{\alpha^2}_t$.
			Therefore, 
			\begin{align*}
				&	X^{n,\alpha^1}_t-X^{\alpha^2}_t -(X^{n,\alpha^1}_s-X^{\alpha^2}_s )\\
				= &\int_s^t \exp\big(\int_u^t\big\{\int_0^1\partial_xb_{1,n} ( \Lambda_n(\lambda,r))\mathrm{d}\lambda+b_3(r,\alpha^1_r)\int_0^1b^\prime_{2,n}( \Lambda_n(\lambda,r)) \mathrm{d}\lambda\big\}\mathrm{d}r\big)\\
				&\times\big(\big\{b_{1,n} (X^{\alpha^2}_u )-b_1(X^{\alpha^2}_u )\big\} +  b_3(u, \alpha^1_u)\big\{b_{2,n}(X^{\alpha^2}_u )-b_{2}(X^{\alpha^2}_u )\big\}\\
				&+b_{2}(X^{\alpha^2}_u )\big\{b_3(u, \alpha^1_u)-	b_3(u, \alpha^2_u )
				\big\}\big)\diff u\\
				&	+\int_0^s\Big(\exp\big(\int_u^t\big\{\int_0^1\partial_x b_{1,n} ( \Lambda_n(\lambda,r))\mathrm{d}\lambda+b_3(r,\alpha^1_r)\int_0^1b^\prime_{2,n}( \Lambda_n(\lambda,r)) \mathrm{d}\lambda\big\}\mathrm{d}r\big)\\
				&-\exp\big(\int_u^s\big\{\int_0^1\partial_x b_{1,n} ( \Lambda_n(\lambda,r))\mathrm{d}\lambda+b_3(r,\alpha^1_r)\int_0^1b^\prime_{2,n}( \Lambda_n(\lambda,r)) \mathrm{d}\lambda\big\}\mathrm{d}r\big)\Big)\\
				&\times \big(\big\{b_{1,n} (X^{\alpha^2}_u )-b_1(X^{\alpha^2}_u )\big\} +  b_3(u, \alpha^1_u)\big\{b_{2,n}(X^{\alpha^2}_u )-b_{2}(X^{\alpha^2}_u )\big\}\\
				&+b_{2}(X^{\alpha^2}_u )\big\{b_3(u, \alpha^1_u)-	b_3(u, \alpha^2_u )
				\big\}\big)\diff u.
			\end{align*}
			Taking the absolute value, the $p$-power and  expectation on both sides and using the H\"older inequality and the inequality $|e^a-e^b|\leq |a-b||e^a+e^b|$ above, we have
			\begin{align*}
				&	\E\big[\big|X^{n,\alpha^1}_t-X^{\alpha^2}_t -(X^{n,\alpha^1}_s-X^{\alpha^2}_s )\big|^p\big]\\
				\leq  & C_p|t-s|^{p-1}\int_s^t \E\Big[\exp\big(p\int_u^t\big\{\int_0^1\partial_xb_{1,n} ( \Lambda_n(\lambda,r))\mathrm{d}\lambda\\
				&+b_3(r,\alpha^1)\int_0^1b^\prime_{2,n}( \Lambda_n(\lambda,r)) \mathrm{d}\lambda\big\}\mathrm{d}r\big)\times\big(\big|b_{1,n} (X^{\alpha^2}_u )-b_1(X^{\alpha^2}_u )\big|^p\\
				& +  \big|b_3(u, \alpha^1_u)\big|^p\big|b_{2,n}(X^{\alpha^2}_u )-b_{2}(X^{\alpha^2}_u )\big|^p+\big|b_{1}(X^{\alpha^2}_u )\big|^p\big|b_3(u, \alpha^1_u)-	b_3(u, \alpha^2_u )
				\big|^p\big)\Big]\diff u\\
				&	+s^{p-1}\int_0^s \E\Big[\big|\int_s^t\big\{\int_0^1\partial_xb_{1,n} ( \Lambda_n(\lambda,r))\mathrm{d}\lambda+b_3(r,\alpha^1_r)\int_0^1b^\prime_{2,n}( \Lambda_n(\lambda,r)) \mathrm{d}\lambda\big\}\mathrm{d}r\big|^p\\
				&\times\Big(\exp\big(\int_u^t\big\{\int_0^1\partial_xb_{1,n} ( \Lambda_n(\lambda,r))\mathrm{d}\lambda+b_3(r,\alpha^1_r)\int_0^1b^\prime_{2,n}( \Lambda_n(\lambda,r)) \mathrm{d}\lambda\big\}\mathrm{d}r\big)\\
				&+\exp\big(\int_u^s\big\{\int_0^1\partial_x\hat{b}_n( \Lambda_n(\lambda,r))\mathrm{d}\lambda+b_3(r,\alpha^1_r)\int_0^1b^\prime_{2,n}( \Lambda_n(\lambda,r)) \mathrm{d}\lambda\big\}\mathrm{d}r\big)\Big)^p\\
				&\times \big(\big|b_{1,n} (X^{\alpha^2}_u )-b_1(X^{\alpha^2}_u )\big|^p + \big|b_3(u, \alpha^1_u)\big|^p\big|b_{2,n}(X^{\alpha^2}_u )-b_{2}(X^{\alpha^2}_u )\big|^p\\
				&+\big|b_{2}(X^{\alpha^2}_u )\big|^p\big|b_3(u, \alpha^1_u)-	b_3(u, \alpha^2_u )
				\big|^p\big)\Big]\diff u
			\end{align*}
			\begin{align*}
				\leq &C_p|t-s|^{p-1}\int_s^t \E\Big[\exp\big(4p\int_u^t\big\{\int_0^1\partial_xb_{1,n} ( \Lambda_n(\lambda,r))\mathrm{d}\lambda\\
				&+b_3(r,\alpha^1_r)\int_0^1b^\prime_{2,n}( \Lambda_n(\lambda,r)) \mathrm{d}\lambda\big\}\mathrm{d}r\big)\Big]^{1/4}\times\Big(\E\big[\big|b_{1,n} (X^{\alpha^2}_u )-b_1(X^{\alpha^2}_u )\big|^{2p}\big]^{1/2} \\
				&+  \E\big[\big|b_3(u, \alpha^1_u)\big|^{2p}\big]^{1/4}\E\big[\big|b_{2,n}(X^{\alpha^2}_u )-b_{2}(X^{\alpha^2}_u )\big|^{2p}\big]^{1/2}\notag\\
				&+\E\big[\big|b_3(u, \alpha^1_u)-	b_3(u, \alpha^2_u )
				\big|^{2p}\big]^{1/2}\Big)\Big]\diff u+\int_0^s \E\Big[\big|\int_s^t\big\{\int_0^1\partial_xb_{1,n} ( \Lambda_n(\lambda,r))\mathrm{d}\lambda\\
				&	+b_3(r,\alpha^1_r)\int_0^1b^\prime_{2,n}( \Lambda_n(\lambda,r)) \mathrm{d}\lambda\big\}\mathrm{d}r\big|^{4p}\Big]^{1/4}\times C_ps^{p-1}\\
				&\times\Big(\E\big[\exp\big(4\int_u^t\big\{\int_0^1\partial_x b_{1,n} ( \Lambda_n(\lambda,r))\mathrm{d}\lambda+b_3(r,\alpha^1_r)\int_0^1b^\prime_{2,n}( \Lambda_n(\lambda,r)) \mathrm{d}\lambda\big\}\mathrm{d}r\big)\big]^{1/4}\\
				&+\E\big[\exp\big(4\int_u^s\big\{\int_0^1\partial_xb_{1,n} ( \Lambda_n(\lambda,r))\mathrm{d}\lambda+b_3(r,\alpha^1_r)\int_0^1b^\prime_{2,n}( \Lambda_n(\lambda,r)) \mathrm{d}\lambda\big\}\mathrm{d}r\big)\big]^{1/4}\Big)\\
				&\times \big(\E\big[\big|b_{1,n} (X^{\alpha^2}_u )-b_1(X^{\alpha^2}_u )\big|^{2p}\big]^{1/2}+\E\big[\big|b_3(u, \alpha^1_u)-	b_3(u, \alpha^2_u )
				\big|^{2p}\big]^{1/2}\\
				& + \E\big[\big|b_3(u, \alpha^1_u)\big|^{4p}\big]^{1/4}\E\big[\big|b_{2,n}(X^{\alpha^2}_u )-b_{2}(X^{\alpha^2}_u )\big|^{4}\big]^{1/4}\big)\diff u\\
				\leq 	&C_p|t-s|^{p-1}\Big\{\big(\int_s^t\E\big[\big|b_{1,n} (X^{\alpha^2}_u )-b_1(X^{\alpha_2}_u )\big|^{2p}\big]\diffns u\big)^{1/2} +\big(\int_s^t\E\big[\big|b_{2,n}(X^{\alpha^2}_u )\notag\\
				&-b_{2}(X^{\alpha^2}_u )\big|^{2p}\big]\diff u\big)^{1/2}+\big(\int_s^t\E\big[\big|b_3(u, \alpha^1_u)-	b_3(u, \alpha^2_u )
				\big|^{2p}\big]\diff u\big)^{1/2}\Big\}\\
				&	+C_ps^{p-1}|t-s|^{p/2}\Big\{\big(\int_0^s\E\big[\big|b_{1,n} (X^{\alpha^2}_u )-b_1(X^{\alpha^2}_u )\big|^{2p}\big]\diffns u\big)^{1/2} +\big(\int_0^s\E\big[\big|b_{2,n}(X^{\alpha^2}_u )-\\
				&-b_{2}(X^{\alpha^2}_u )\big|^{2p}\big]\diff u\big)^{1/2}+\big(\int_0^s\E\big[\big|b_3(u, \alpha^1_u)-	b_3(u, \alpha^2_u )
				\big|^{4p}\big]\diff u\big)^{1/4}\Big\}\\
				\leq &C_p|t-s|^{p/2}\Big\{\big(\int_0^1\E\big[\big|b_{1,n} (X^{\alpha^2}_u )-b_1(X^{\alpha^2}_u )\big|^{2p}\big]\diffns u\big)^{1/2} +\big(\int_0^1\E\big[\big|b_{2,n}(X^{\alpha^2}_u )\notag\\
				&-b_{2}(X^{\alpha^2}_u )\big|^{4p}\big]\diff u\big)^{1/4}+\big(\int_0^1\E\big[\big|b_3(u, \alpha^1_u)-	b_3(u, \alpha^2_u )
				\big|^{2p}\big]\diff u\big)^{1/2}\Big\},
			\end{align*}
			where we have use similar steps as in the proof of Lemma \ref{lem:auxlemma} to show the boundedness of the exponential, and Assumption \ref{mainassum20}.

			Thanks to the Lipschitz continuity of $b_3$, the last term on the right side is bounded
			\begin{equation}
				\label{eq:estim.alpha12}
				\E\Big[\int_0^1\big|b_3(s, \alpha^1_s)-	b_3(s, \alpha^2_s )
				\big|^{2p}\diff s \Big]^{\frac{1}{2}} \le  C\E\big[\underset{0\leq t\leq 1}\sup|\alpha_s^1-\alpha_s^2|^{2p}\big]^{1/2}.
			\end{equation}
			Using similar arguments as in \cite{BMBPD17, MenTan19}, we can show that
			\begin{align}
				\label{eq:estim.bnb}
				&	\E\Big[\int_0^{1}|b_2(X^{\alpha^2}_s) - b_{2,n}( X^{\alpha^2}_s)|^{4p}\mathrm{d}s\Big]^{\frac{1}{4}}\notag\\ 
				\le& C\Big(\int_0^1\frac{1}{\sqrt{2\pi s}}e^{\frac{|x|^2}{2s}}\int_{\mathbb{R}}\big|b_{2,n} (\sigma y)-b_{2} (\sigma y)\big|^{4p}e^{-\frac{|y|^2}{4s}}\diffns y\diff s\Big)^{\frac{1}{4}}.
			\end{align}
			and 
			\begin{align}
				\label{eq:estim.bnb1}
				&	\E\Big[\int_0^{1}|b_{1} (s,X^{\alpha^2}_s) - b_{1,n} (s, X^{\alpha^2}_s)|^{2p}\mathrm{d}s\Big]^{\frac{1}{2}}\notag\\ 
				\le& C\Big(\int_0^1\frac{1}{\sqrt{2\pi s}}e^{\frac{|x|^2}{2s}}\int_{\mathbb{R}}\big|b_{1,n}  (s,\sigma y)-b_1(s,\sigma y)\big|^{2p}e^{-\frac{|y|^2}{4s}}\diffns y\diff s\Big)^{\frac{1}{2}}.
			\end{align}

			Substituting this into \eqref{eqsubGRR1}, we have 
			\begin{align*}
				&\mathbb{E}\big[\sup_{0\leq t\leq 1}|f(t)|^p\big]\\
				\leq&
				C(p,\|b\|_\infty)\Big(\int_0^1\int_0^1|t_2-t_1|^{p(\frac{1}{2}-\frac{\epsilon}{1+\epsilon})}\diff t_1 \diff t_2\Big)\E\big[\underset{0\leq t\leq 1}\sup|\alpha_s^1-\alpha_s^2|^{2p}\big]^{1/2}	\notag\\
				&+ \Big(\int_0^1\frac{1}{\sqrt{2\pi s}}e^{\frac{|x|^2}{2s}}\int_{\mathbb{R}}\big|b_{1,n}  (s,\sigma y)-b_1 (s,\sigma y)\big|^{2p}e^{-\frac{|y|^2}{4s}}\diffns y\diff s\Big)^{\frac{1}{2}}\\
				&+\Big(\int_0^1\frac{1}{\sqrt{2\pi s}}e^{\frac{|x|^2}{2s}}\int_{\mathbb{R}}\big|b_{2,n} (\sigma y)-b_{2} (\sigma y)\big|^{4p}e^{-\frac{|y|^2}{4s}}\diffns y\diff s\Big)^{\frac{1}{4}}\\
				\leq&C(p,\|b\|_\infty)\Big\{	\E\big[\underset{0\leq t\leq 1}\sup|\alpha_s^1-\alpha_s^2|^{2p}\big]^{1/2}	\\
				&+ \Big(\int_0^1\frac{1}{\sqrt{2\pi s}}e^{\frac{|x|^2}{2s}}\int_{\mathbb{R}}\big|b_{1,n}  (s,\sigma y)-b_1 (s,\sigma y)\big|^{2p}e^{-\frac{|y|^2}{4s}}\diffns y\diff s\Big)^{\frac{1}{2}}\\
				&+\Big(\int_0^1\frac{1}{\sqrt{2\pi s}}e^{\frac{|x|^2}{2s}}\int_{\mathbb{R}}\big|b_{2,n} (\sigma y)-b_{2} (\sigma y)\big|^{4p}e^{-\frac{|y|^2}{4s}}\diffns y\diff s\Big)^{\frac{1}{4}}\Big\}.
			\end{align*}	
			
			Thus for $p\geq 5$ with $p\frac{\epsilon}{4(1+\epsilon)}>1$, we have
			
			\begin{align*}
				&	\mathbb{E}\big[\sup_{0\leq t\leq 1}|X^{n,\alpha^1}_t-X^{\alpha^2}_t |^p\big]\\
				\leq&C(p,\|b\|_\infty)\Big\{	\E\big[\underset{0\leq t\leq 1}\sup|\alpha_s^1-\alpha_s^2|^{2p}\big]^{1/2}\notag\\
				&	+ \Big(\int_0^1\frac{1}{\sqrt{2\pi s}}e^{\frac{|x|^2}{2s}}\int_{\mathbb{R}}\big|b_{1,n}  (s,\sigma y)-b_1 (s,\sigma y)\big|^{2p}e^{-\frac{|y|^2}{4s}}\diffns y\diff s\Big)^{\frac{1}{2}}\notag\\
				&+\Big(\int_0^1\frac{1}{\sqrt{2\pi s}}e^{\frac{|x|^2}{2s}}\int_{\mathbb{R}}\big|b_{2,n} (\sigma y)-b_{2} (\sigma y)\big|^{4p}e^{-\frac{|y|^2}{4s}}\diffns y\diff s\Big)^{\frac{1}{4}}\Big\}.
			\end{align*}	
			Let $q\geq2$. Choose $p\geq 2$ such that $qp\frac{\epsilon}{4(1+\epsilon)}\geq 2$. Then
			\begin{align*}
				&	\mathbb{E}\big[\sup_{0\leq t\leq 1}|X^{n,\alpha^1}_t-X^{\alpha^2}_t |^q\big]\\
				\leq &	\mathbb{E}\big[\sup_{0\leq t\leq 1}|X^{n,\alpha^1}_t-X^{\alpha^2}_t |^{qp}\big]^{1/p}\\
				\leq&C(q,\|b\|_\infty)\Big\{	\E\big[\underset{0\leq t\leq 1}\sup|\alpha_s^1-\alpha_s^2|^{2q}\big]^{1/2}	\\
				&+ \Big(\int_0^1\frac{1}{\sqrt{2\pi s}}e^{\frac{|x|^2}{2s}}\int_{\mathbb{R}}\big|b_{1,n}  (s,\sigma y)-b_1 (s,\sigma y)\big|^{2q}e^{-\frac{|y|^2}{4s}}\diffns y\diff s\Big)^{\frac{1}{2}}\notag\\
				&+\Big(\int_0^1\frac{1}{\sqrt{2\pi s}}e^{\frac{|x|^2}{2s}}\int_{\mathbb{R}}\big|b_{2,n} (\sigma y)-b_{2} (\sigma y)\big|^{4q}e^{-\frac{|y|^2}{4s}}\diffns y\diff s\Big)^{\frac{1}{4}}\Big\}.
			\end{align*}

			Since $b_k$ is Lipschitz continuous the convergence (iii) follows by classical arguments.
		\end{proof}
		The following theorem corresponds to \cite[Lemma 2.4]{MenTan22}
		\begin{lemm}
			\label{lem:J.continuous}
			Let $\alpha\in \mathcal{A}$ and let $\alpha^n$ be a sequence of admissible controls such that $\delta(\alpha^n,\alpha)\to 0$.
			Then
			\begin{itemize}
				\item[(i)] $| J_k(\alpha^n) - J_k(\alpha) | \to 0$ as $n\to \infty$  for every $k \in \mathbb{N}$ fixed. Precisely, the function $J_k:(\mathcal{A},\delta) \to \mathbb{R}$ is continuous.
				\item[(ii)] $|J_n(\alpha) - J(\alpha)| \le C\varepsilon_n$  for some $C>0$ with $\varepsilon_n\downarrow 0$.
			\end{itemize}
		\end{lemm}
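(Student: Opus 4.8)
The plan is to treat the two items independently, in each case transferring the statement about the cost functional onto the corresponding estimate for the controlled state process provided by Lemma~\ref{lem:conv.Xnn}, and then exchanging limit and expectation by means of the quadratic growth of $f,g$ from Assumption~\ref{mainassum2} (this mirrors the proof of \cite[Lemma~2.4]{MenTan22}). A structural fact used throughout is that the mollified drifts $b_n=b_{1,n}+b_{2,n}b_3$ are bounded uniformly in $n$ \emph{and} in the control variable --- one takes $\|b_{i,n}\|_\infty\le\|b_i\|_\infty$ and $b_3$ is bounded --- so that every solution $X^{\beta,n}$ of an approximating equation (and likewise the solution of the limiting equation, whose drift is bounded by $M(\omega)$) obeys the $n$- and $\beta$-independent bound $\sup_{t\le T}|X^{\beta,n}_t|\le G:=|x_0|+TC_b+|\sigma|\sup_{t\le T}|B_t|$ with $G\in\bigcap_{p\ge1}L^p(\PP)$. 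This $G$ serves as the dominating function below.

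\emph{Part~(i).} Fix $k\in\N$ and write
$$
J_k(\alpha^n)-J_k(\alpha)=\E\int_0^T\big(f(s,X^{\alpha^n,k}_s,\alpha^n_s)-f(s,X^{\alpha,k}_s,\alpha_s)\big)\diff s+\E\big[g(X^{\alpha^n,k}_T)-g(X^{\alpha,k}_T)\big].
$$
I would establish convergence of the whole sequence through the subsequence principle. Given an arbitrary subsequence, since $\delta(\alpha^n,\alpha)\to0$ means $\alpha^n\to\alpha$ in $\PP\otimes\diff t$-measure, one extracts a further subsequence along which $\alpha^n\to\alpha$ $\PP\otimes\diff t$-a.e.; along it Lemma~\ref{lem:conv.Xnn}(iii) gives $\E[|X^{\alpha^n,k}_t-X^{\alpha,k}_t|^2]\to0$ for every $t$, and as this quantity is dominated by $4\,\E[G^2]$ uniformly in $t$, integrating in $t$ and refining once more one may assume $X^{\alpha^n,k}_s\to X^{\alpha,k}_s$ for $\PP\otimes\diff t$-a.e. $(s,\omega)$ and $X^{\alpha^n,k}_T\to X^{\alpha,k}_T$ $\PP$-a.s. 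Using continuity of $f$ in $(x,a)$ (part of the standing differentiability hypotheses on $f$, which are in any case needed for the Hamiltonian $\partial_aH$ to be defined) and of $g$ (already implied by $|\partial_xg|\le C(1+|x|)$), the integrands converge a.e., and since $|f(s,x,a)|+|g(x)|\le C(1+|x|^2)\le C(1+G^2)\in L^1$, dominated convergence yields $J_k(\alpha^n)\to J_k(\alpha)$ along the sub-subsequence, hence for the whole sequence. This is the asserted continuity of $J_k:(\mathcal A,\delta)\to\R$.

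\emph{Part~(ii).} Fix $\alpha\in\mathcal A$ and write $J_n(\alpha)-J(\alpha)=\E\int_0^T\big(f(s,X^{\alpha,n}_s,\alpha_s)-f(s,X^{\alpha}_s,\alpha_s)\big)\diff s+\E[g(X^{\alpha,n}_T)-g(X^{\alpha}_T)]$. From $|\partial_xf|+|\partial_xg|\le C(1+|x|)$ one obtains the mean-value estimates $|f(s,x,a)-f(s,y,a)|+|g(x)-g(y)|\le C(1+|x|+|y|)\,|x-y|$, so Cauchy--Schwarz gives
$$
|J_n(\alpha)-J(\alpha)|\le C\,\Big(\E\big[\sup_{t\le T}(1+|X^{\alpha,n}_t|+|X^{\alpha}_t|)^2\big]\Big)^{1/2}\Big(\E\big[\sup_{t\le T}|X^{\alpha,n}_t-X^{\alpha}_t|^2\big]\Big)^{1/2}.
$$
The first factor is bounded uniformly in $n$ by Lemma~\ref{lem:conv.Xnn}(i) together with the a.s.\ bound $\sup_t|X^\alpha_t|\le G$. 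For the second factor I would invoke Lemma~\ref{lem:conv.Xnn}(ii) with $\alpha_1=\alpha_2=\alpha$ and $p=2$: the control term drops out and the right-hand side reduces to the sum of an $L^4$- and an $L^8$-type distance between $b_{1,n},b_1$ and $b_{2,n},b_2$ against the Gaussian weight there; denoting this bound $\varepsilon_n^2$ one gets $|J_n(\alpha)-J(\alpha)|\le C\varepsilon_n$ with $C$ independent of $\alpha$. Since $b_{1,n}\to b_1$ and $b_{2,n}\to b_2$ pointwise a.e.\ with $\|b_{i,n}\|_\infty\le\|b_i\|_\infty$, dominated convergence against that (finite) weight yields $\varepsilon_n\to0$, and replacing $\varepsilon_n$ by $\sup_{m\ge n}\varepsilon_m$ makes it nonincreasing, as required.

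The only genuinely delicate step is the passage to the limit in (i): $\delta$-convergence of the controls gives merely convergence in $\PP\otimes\diff t$-measure, which forces the double-subsequence argument, and the fact that $\sup_{t\le T}|X^{\alpha^n,k}_t|$ can be dominated by the single, $n$-independent random variable $G\in\bigcap_pL^p$ --- itself a consequence of the uniform boundedness of the mollified drifts (including in the control variable) --- is exactly what makes dominated convergence applicable. All remaining steps are a mechanical combination of Lemma~\ref{lem:conv.Xnn} with the growth hypotheses in Assumption~\ref{mainassum2}.
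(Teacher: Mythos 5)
Your proposal is correct and consistent with the paper's intended proof. The paper itself states only that the result ``follows as in \cite{MenTan22}'' and does not spell out the argument, so your detailed reconstruction is a valid expansion of that citation: the key ingredients (the dominating variable obtained from uniform boundedness of the mollified drifts, the subsequence--subsubsequence passage to handle the metric $\delta$ in part~(i), and the combination of the mean-value/quadratic-growth estimate with Lemma~\ref{lem:conv.Xnn}(ii) specialised to $\alpha_1=\alpha_2=\alpha$ in part~(ii)) are exactly what one expects from the referenced result of Menoukeu-Pamen and Tangpi.
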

		\begin{proof}
			Follows as in \cite{MenTan22}.
		\end{proof}
		The next result gives the stability of the first variation and the adjoint processes.
		
		\begin{lemm}
			\label{lem:conv.y.phi}
			Let $\alpha\in \mathcal{A}$ and $\alpha^n$ be a sequence of admissible controls such that $\delta(\alpha^n,\alpha)\to 0$.
			Then, the processes $X^{\alpha^n}_n$ and $X^{\alpha}$ admit Sobolev differentiable flows denoted $\Phi^{\alpha^n}_n$ and $\Phi^{\alpha}$, respectively and for every $0\le s\le t\le T$ it holds
			\begin{itemize}
				\item[(i)] $\mathbb{E}\big[|\Phi^{n,\alpha^n}_{s,t} - \Phi^\alpha_{s,t} |^2 \big] \to 0$ as $n\to \infty$,
				\item[(ii)] $\mathbb{E}\big[| Y^{n,\alpha^n}_t - Y^\alpha_t|^2 \big] \to 0$ as $n\to \infty$,
			\end{itemize}
			where $Y^\alpha$ is the adjoint process defined as
			\begin{equation*}
				Y^{\alpha}_t := \mathbb{E}\Big[\Phi^{\alpha}_{t,T}\partial_xg( X^{\alpha}_T) + \int_t^T\Phi^{\alpha}_{t,s} \partial_xf(s, X^{\alpha}_s, \alpha_s)\mathrm{d}s\mid \mathcal{F}_t \Big],
			\end{equation*}	
			and $Y^{\alpha^n}_n$ is defined similarly, with $(X^{\alpha},\alpha, \Phi^\alpha)$ replaced by  $(X^{\alpha^n}_n,\alpha^n, \Phi^{\alpha^n}_n)$.
		\end{lemm}
		\begin{proof} The proof follows as in \cite{MenTan22}. 
		\end{proof}
		
		\begin{proof}[Proof of Theorem \ref{thm:necc}]
			Let $\hat\alpha$ be an optimal control and $n\ge 1$ fixed.
			Thanks to the linear growth assumption on $f,g$ the function $J_n$ is bounded from above.
			By Lemma \ref{lem:J.continuous} the function $J_n$ is continuous on $(\mathcal{A},\delta)$ and there exists $\varepsilon_n$ such that 
			\begin{equation*}
				J(\hat\alpha) - J_n(\hat\alpha)\le \varepsilon_n \text{ and } J_n(\alpha) - J(\alpha) \le \varepsilon_n\quad \text{for all } \alpha \in \mathcal{A}.
			\end{equation*}
			That is, $J_n(\hat\alpha) \le \inf_{\alpha \in \mathcal{A}}J_n(\alpha) + 2\varepsilon_n$.
			Hence, using the Ekeland's variational principle (see \cite{Ekeland79}), there exists a control $\hat\alpha^n \in \mathcal{A}$ such that $\delta(\hat\alpha, \hat\alpha^n)\le (2\varepsilon_n)^{1/2}$ and 
			\begin{equation*}
				J_n(\hat\alpha^n) \le J_n(\alpha) + (2\varepsilon_n)^{1/2}\delta(\hat\alpha^n,\alpha)\quad \text{for all}\quad \alpha \in \mathcal{A}.
			\end{equation*}
			Thus define $J^\varepsilon_n(\alpha):= J_n(\alpha) + (2\varepsilon_n)^{1/2}\delta(\hat\alpha^n,\alpha)$, so that the control process $\hat\alpha^n$ is optimal for the problem with performance functional $J^\varepsilon_n$.
			
			Denote by $\beta \in \mathcal{A}$ an arbitrary control and let $\varepsilon\in[0,1]$ be a fixed constant. Let $\eta=\eta_n$ be defined by $\eta=\eta_n := \beta - \hat\alpha^n$, then by the convexity of  $\mathbb{A}$, we have  $\hat\alpha^n + \varepsilon\eta \in \mathcal{A}$.
			Thanks to the smoothness of $b_n$, $J_n$ is G\^ateau differentiable and its derivative in the direction $\eta$ is given by
			\begin{align*}
				\frac{\diffns}{\diffns\varepsilon}J_n(\hat \alpha + \varepsilon \eta)_{|_{\varepsilon = 0}}& = \mathbb{E}\Big[\int_0^T\partial_xf(t, X^{n,\hat\alpha^n}_t, \hat\alpha^n_t)\hat V^n_t + \partial_{\alpha}f(t, X^{n,\hat\alpha^n}_t, \hat\alpha^n_t)\eta_t\mathrm{d}t\\
				&\qquad + \partial_xg(X^{n,\hat\alpha^n}_T)\hat V^n_T \Big] ,
			\end{align*}
			where $V_n$ is the stochastic process solving the linear equation
			\begin{equation*}
				\diffns V^n(t) = \partial_xb_n(t, X^{n,\alpha}_t,\alpha_t)V^n_t\mathrm{d}t + \partial_\alpha b_n(t, X^{n,\alpha}_t,\alpha_t)\eta_t\mathrm{d}t,\quad V^n(0) = 0.
			\end{equation*}
			On the other hand using triangular inequality, we have
			\begin{equation*}
				\lim_{\varepsilon\downarrow 0}\frac{1}{\varepsilon}\big(\delta(\hat\alpha^n, \alpha + \varepsilon\eta) - \delta(\hat\alpha^n, \alpha) \big) \le \mathbb{E}\big[ \sup_{t \in [0,T]}|\eta_t|^4 \big]^{1/4}.
			\end{equation*}
			Therefore, $J^\varepsilon_n$ is also G\^ateau differentiable and since $\hat\alpha^n$ is optimal for $J^\varepsilon_n$, we have
			\begin{align*}
				0\le \frac{\mathrm{d}}{\mathrm{d}\varepsilon}J^\varepsilon_n(\hat\alpha^n + \varepsilon \eta)_{|_{\varepsilon = 0}} =& \frac{\mathrm{d}}{\mathrm{d}\varepsilon}J_n(\hat\alpha^n + \varepsilon \eta)_{|_{\varepsilon = 0}} + \lim_{\varepsilon\downarrow 0} (2\varepsilon_n)^{1/2}\frac{1}{\varepsilon}\delta(\hat\alpha^n,\hat\alpha^n + \varepsilon\eta)  \\
				\leq	&  \mathbb{E}\big[\int_0^T\partial_xf\big(t, X^{n,\hat\alpha^n}_{t}, \hat\alpha^n_{t} \big)\hat V^n_{t} + \partial_{\alpha}f\big( t, X^{n,\hat\alpha^n}_{t}, \hat\alpha^n_{t} \big)\eta_{t}\mathrm{d}t\\
				&+ \partial_xg(X^{n,\hat\alpha^n}_T)\hat V^n_T \big] + \big(2\varepsilon_n)^{1/2}(E[\sup_t|\eta_{t}|^{4}] \big)^{1/4}\\
				\leq	& \mathbb{E}\big[\int_0^T\partial_\alpha H_n\big(t, X^{n,\hat\alpha}_{t}, Y^{n,\hat\alpha^n}_{t}, \hat\alpha^n_{t} \big)\eta_{t}\mathrm{d}t \big] + \tilde C_C\varepsilon_n^{1/2},
			\end{align*}
			for a constant $\tilde C_C>0$ depending on the constant $C$ (introduced in the definition of $\mathcal{A}$).
			The inequality follows since $\hat\alpha^n\in \mathcal{A}$, and $H_n$ is the Hamiltonian of the problem with drift $b_n$ given by
			\begin{equation*}
				H_n(t,x,y,a) := f(t, x,a) + b_n(t,x,a)y
			\end{equation*}
			and $(Y^{n,\hat\alpha^n}, Z^{n,\hat\alpha^n})$ the adjoint processes satisfying 
			\begin{equation*}
				\mathrm{d}Y^{n,\hat\alpha^n}_{t} = -\partial_xH_n(t, X^{n,\hat\alpha}_{t}, Y^{n,\hat\alpha^n}_{t}, \hat\alpha^n_{t})\mathrm{d}t + Z^{n,\hat\alpha^n}_{t}\mathrm{d}B_{t}.
			\end{equation*}
			Next, we show that 
			\begin{align}\label{eqconvHam}
				\int_0^T\partial_\alpha H_n\big(t,X^{n,\hat\alpha}_{t}, Y^{n,\hat\alpha^n}_{t}, \hat\alpha^n_{t}\big)\eta_t\mathrm{d}t \text{ converges to }\int_0^T\partial_\alpha H\big(t, X^{\hat\alpha}_t, Y^{\hat\alpha}_t, \hat\alpha_t \big)(\beta_t-\hat \alpha_t)\mathrm{d}t .
			\end{align}
			Adding and subtracting several times gives
			\begin{align*}
				&\int_0^T\big\{	\partial_\alpha H_n\big(t,X^{n,\hat\alpha}_{t}, Y^{n,\hat\alpha^n}_{t}, \hat\alpha^n_{t} \big)\eta_{t}-	\partial_\alpha H\big(t, X^{\hat\alpha}_t, Y^{\hat\alpha}_t, \hat\alpha_t \big)(\beta_{t}-\hat\alpha_{t})	\big\}\mathrm{d}t \\
				=&\int_0^T\big\{\big(\partial_{\alpha}f(t,X^{n,\hat\alpha}_{t},  \hat\alpha^n_{t})- \partial_{\alpha}f(t, X^{\hat\alpha}_t,  \hat\alpha_t)\big)\eta(t)+\partial_{\alpha}f(t, X^{\hat\alpha}_t,  \hat\alpha_t)(\hat\alpha^n_t-\hat\alpha_t)\\
				&+ \big(b_{2,n}(X^{n,\hat\alpha^n}_t)-b_{2,n}(X^{\hat\alpha}_t)\big)\partial_{\alpha}b_{3}\big(t,  \hat\alpha^n_t \big)Y^{n,\hat\alpha^n}_t\eta_t+\big(b_{2,n}(X^{\hat\alpha}_t)\\
				&-b_{2}(X^{\hat\alpha}_t)\big)\partial_{\alpha}b_{3}\big(t,  \hat\alpha^n_t\big)Y^{n,\hat\alpha^n}_t\eta_t+b_{2}(X^{\hat\alpha}_t)\big(\partial_{\alpha}b_{3}\big(t,  \hat\alpha^n_t \big)-\partial_{\alpha}b_{3}\big(t,  \hat\alpha_t \big)\big)Y^{n,\hat\alpha^n}_t\eta_t\\
				&+b_{2}(X^{\hat\alpha}_t)\partial_{\alpha}b_{3}\big(t,  \hat\alpha_t \big)\big(Y^{n,\hat\alpha^n}_t-Y^{\hat\alpha}_t\big)\eta_t+b_{2}(X^{\hat\alpha}_t)\partial_{\alpha}b_{3}\big(t,  \hat\alpha_t \big)Y^{\hat\alpha}_t\big(\hat{\alpha}^n_t-\hat{\alpha}_t\big)\big\}\diffns t.
			\end{align*}
			Squaring and taking expectation on both sides, and using repeatedly H\"older inequality, the mean value theorem, the assumptions on the coefficients and the processes 		
			\begin{align*}
				&\mathbb{E}\big[\big(\int_0^T\big\{	\partial_\alpha H_n\big(t,X^{n,\hat\alpha}_{t}, Y^{n,\hat\alpha^n}_{t}, \hat\alpha^n_{t} \big)\eta_{t}-	\partial_\alpha H\big(t, X^{\hat\alpha}_{t}, Y^{\hat\alpha}_{t}, \hat\alpha_{t} \big)(\beta_{t}-\hat\alpha_{t})	\big\}\mathrm{d}t\big)^2\big] \\
				\leq &C\mathbb{E}\big[\int_0^T\big(|X^{n,\hat\alpha^n}_{t}-X^{\hat\alpha}_{t}|^2+|\hat\alpha^n_{t}-\hat\alpha_{t}|^2\big)|\eta(t)|^2\diffns t\big]\\
				&+\mathbb{E}\big[\int_0^T|\partial_{\alpha}f(t, X^{\hat\alpha}_{t}, \hat\alpha(t))|^2|\hat\alpha^n_{t}-\hat\alpha_{t}|^2\diffns t\big]\\
				&+ \mathbb{E}\big[\big(\int_0^T\int_0^1\partial_xb_{2,n}( \Lambda_n(\lambda,s)) \mathrm{d}\lambda(X^{n,\hat \alpha^n}_{t} - X^{\hat \alpha}_{t})\partial_{\alpha}b_{3}\big(t,  \hat\alpha^n_{t} \big)Y^{n,\hat\alpha^n}_{t}\eta_{t}\diffns t\big)^2\big]\\
				&+ \mathbb{E}\big[\big(\int_0^T\big(b_{2,n}(X^{\hat\alpha}_{t})-b_{2}(X^{\hat\alpha}_{t})\big)\partial_{\alpha}b_{3}\big(t,  \hat\alpha^n_{t} \big)Y^{n,\hat\alpha^n}_{t}\eta_{t}\diffns t\big)^2\big]\\
				&+\mathbb{E}\big[\int_0^T|b_{2}(X^{\hat\alpha}_{t})|^2|\hat\alpha^n_{t} -  \hat\alpha_{t} |^2|Y^{n,\hat\alpha^n}_{t}|^2|\eta_{t}|^2\diffns t\big]\\
				&+\mathbb{E}\big[\int_0^T|b_{2}(X^{\hat\alpha}_{t})|^2|\partial_{\alpha}b_{3}\big(t,  \hat\alpha_{t} \big)|^2\big|Y^{n,\hat\alpha^n}_{t}-Y^{\hat\alpha}_{t}\big|^2|\eta(t)|^2\diffns t\big]\\
				&+\mathbb{E}\big[\int_0^T|b_{2}(X^{\hat\alpha}_{t})|^2|\partial_{\alpha}b_{3}\big(t,  \hat\alpha_{t} \big)|^2|Y^{\hat\alpha}(t)|^2\big|\hat{\alpha}^n_{t}-\hat{\alpha}_{t}\big|^2\diffns t\big]\\
				\leq &C\int_0^T\big(\mathbb{E}\big[|X^{n,\hat\alpha^n}_{t}-X^{\hat\alpha}_{t}|^2\big]^{\frac{1}{4}}\mathbb{E}[|\eta(t)|^4]^{\frac{1}{2}}\mathbb{E}\big[|X^{n,\hat\alpha^n}_{t}|^6+|X^{\hat\alpha}_{t}|^6\big]^{\frac{1}{4}}\\
				&+\mathbb{E}\big[|\hat\alpha^n_{t}-\hat\alpha_{t}|^4\big]^{\frac{1}{2}}\mathbb{E}\big[|\eta_{t}|^4\big]^{\frac{1}{2}}\big)\diffns t+\int_0^T\mathbb{E}\big[|\partial_{\alpha}f(t, X^{\hat\alpha}_{t}, \hat\alpha_{t})|^4\big]^{\frac{1}{2}}\mathbb{E}\big[|\hat\alpha^n_{t}-\hat\alpha_{t}|^4\big]^{\frac{1}{2}}\diffns t\\
				&+ \big\{\mathbb{E}\big[\sup_{0\leq t\leq T}|X^{n,\hat \alpha^n}_t - X^{\hat \alpha}_{t}|^2\big]^{\frac{1}{8}}\mathbb{E}\big[\sup_{0\leq t\leq T}|X^{n,\hat \alpha^n}_{t}|^{14}+ |X^{\hat \alpha}_{t}|^{14}\big]^{\frac{1}{4}}\\
				&\times\mathbb{E}\big[\sup_{0\leq t\leq T}|Y^{n,\hat\alpha^n}_{t}|^{16}\big]^{\frac{1}{8}}\big\}\mathbb{E}\big[\sup_{0\leq t\leq T}|\eta_{t}|^{16}\big]^{\frac{1}{8}}\mathbb{E}\big[\big(\int_0^T\int_0^1|\partial_xb_{2,n}( \Lambda_n(\lambda,s)) |\mathrm{d}\lambda\diffns t\big)^4\big]^{\frac{1}{2}}\\
				&+ \int_0^T\mathbb{E}\big[|b_{2,n}(X^{\hat\alpha}_{t})-b_{2}(X^{\hat\alpha}_{t})|^2\big]\diffns t\int_0^T\mathbb{E}\big[|Y^{n,\hat\alpha^n}_{t}|^4\big]^{\frac{1}{2}}\mathbb{E}\big[|\eta(t)|^4\big]^{\frac{1}{2}}\diffns t\\
				&+\int_0^T\mathbb{E}\big[|\hat\alpha^n_{t} -  \hat\alpha_{t} |^4\big]^{\frac{1}{2}}\mathbb{E}\big[|Y^{n,\hat\alpha^n}_{t}|^8\big]^{\frac{1}{4}}\mathbb{E}\big[|\eta_{t}|^8\big]^{\frac{1}{4}}\diffns t\\
				&+\int_0^T\mathbb{E}\big[\big|Y^{n,\hat\alpha^n}_{t}-Y^{\hat\alpha}_{t}\big|^4\big]^{\frac{1}{2}}\mathbb{E}\big[|\eta_{t}|^4\big]^{\frac{1}{2}}\diffns t+\int_0^T\mathbb{E}\big[|Y^{\hat\alpha}_{t}|^4\big]^{\frac{1}{2}}\mathbb{E}\big[\big|\hat{\alpha}^n_{t}-\hat{\alpha}_{t}\big|^4\big]^{\frac{1}{2}}\diffns t.
			\end{align*}
			Using the $L^p$ boundedness of each term above, Lemmas \ref{lem:conv.Xnn} and  \ref{lem:conv.y.phi}, Proposition \ref{corsupclasder},  and similar steps as in the proof of Lemma \ref{lem:auxlemma}, it follows that the right side converges to $0$. Thus as $n$ goes to $\infty$, we have:
			\begin{align*}
				0
				\leq	& \mathbb{E}\big[\int_0^T\partial_\alpha H\big(t, X^{\hat\alpha}_{t}, Y^{\hat\alpha}_{t}, \hat\alpha_{t} \big)(\beta_{t}-\hat \alpha_{t})\mathrm{d}t \big],
			\end{align*}
			By standard arguments, we can thus conclude that
			\begin{equation*}
				\partial_\alpha H(t, X^{\hat\alpha}_{t}, Y^{\hat\alpha^n}_{t}, \hat\alpha_{t})\cdot (\beta - \hat\alpha(t)) \ge 0 \quad \mathbb{P}\otimes \mathrm{d}t \mathrm{-a.s}.
			\end{equation*}
			Thus \eqref{eq:nec.cond} is showed, the proof is completed.
		\end{proof}		
		\subsubsection{Proof of of Theorem \ref{thm:suff}}\label{proofteosufmaxp} 
		
		Let us now proceed with the proof of the sufficient condition for optimality.
		To ensure that the concavity of $H$, as assumed in Theorem \ref{thm:suff}, is preserved after approximation, we explicitly assume that the function 
		$b_n$ is defined using standard mollification.
		
		Therefore, $H_n(t,x,y,a):= f(t,x,a)+ b_n(t,x,a)y$ is a mollification of $H$ and thus remains concave. Let $\hat \alpha \in \mathcal{A}$ satisfy \eqref{eq:suff.con} and $\alpha'$ an arbitrary element of $\mathcal{A}$.
		Our goal is to show that $J(\hat \alpha) \ge J(\alpha')$.
		Let $n \in \mathbb{N}$ be arbitrarily chosen.
		By definition, we have
		\begin{align*}
			&J_n(\hat \alpha) - J_n(\alpha')\\
			& = \E\Big[g(X^{n,\hat \alpha}_T) - g(X^{n,\alpha'}_T) + \int_0^Tf(u, X^{n,\hat \alpha}_u, \hat \alpha_u) - f(u, X^{n,\alpha'}_u, \alpha'_u)\diff u  \Big]	\\
			&\ge \E\Big[\partial_xg(X^{n,\hata}_T)\big\{X^{\hata}_T -X^{n,\alpha'}_T\big\} + \int_0^T\big\{ b_n(u, X^{n,\alpha'}_u, \alpha'_u) - b_n(u, X^{n,\hata}_u,\hata_u)\big\} Y^{n,\hata}_u\diff u\\
			&\quad + \int_0^T H_n(u, X^{n,\hata}_u, Y^{n,\hata}_u, \hata_u) - H_n(u, X^{n,\alpha'}_u,Y^{n,\hata}_u, \alpha'_u)\diff u  \Big],
		\end{align*}	
		where we used the definition of $H_n$ and the fact that $g$ is concave.
		Since $Y^{n,\hata}$ satisfies
		\begin{equation*}
			Y^{n,\hata}_t = \E\big[\Phi^{n,\hata}_{t,T} \partial_xg( X^{n,\hata}_T) + \int_t^T\Phi^{n,\hata}_{t,u} \partial_xf(u, X^{n,\hata}_u, \hata_u)\mathrm{d}u\mid \mathcal{F}_t \big],
		\end{equation*}
		it follows by the martingale representation and the It\^o's formula that there is a square integrable progressive process $(Y^{\hata}_n,Z^{\hata}_n)$  such that $Y_n^{\hata}$ satisfies the (linear) equation 
		\begin{equation*}
			Y^{n,\hata}_t = \partial_xg(X^{n,\hata}_T) + \int_t^T\partial_xH_n(u, X^{n,\hata}_u, Y^{n,\hata}_u, \hata_u)\diff u - \int_t^TZ^{n,\hata}_u\diff B_u.
		\end{equation*}
		Recall that since $b_n$ is smooth, so is $H_n$.
		Therefore, by the It\^o's formula once again we have
		\begin{align*}
			Y^{n,\hata}_T\big\{X^{n,\hata}_T - X^{n,\alpha'}_T\big\}
			=&  \int_0^TY^{\hata}_n(u)\big\{b_n(u, X^{n,\hata}_u,\hata_u) - b_n(u, X^{n,\alpha'}_u,\alpha'_u) \big\}\diff u\\
			& - \int_0^T\big\{X^{n,\hata}_u - X^{n,\alpha'}_u \big\}\partial_xH_n(u, X^{n,\hata}_u, Y^{n,\hata}_u, \hata_u)\diff u\\
			& + \int_0^T\big\{X^{n,\hata}_u - X^{n,\alpha'}_u \big\} Z^{n,\hata}_u\diff B_u.
		\end{align*}
		Since the stochastic integral above is a local martingale, a standard localization argument allows to take expectation on both sides to get that
		\begin{align*}
			J_n(\hata) - J_n(\alpha') &\ge \E\big[- \int_0^T\big\{X^{n,\hata}_u - X^{n,\alpha'}_u \big\}\partial_xH_n(u, X^{n,\hata}_u, Y^{n,\hata}_u, \hata_u)\diff u \\
			&\quad + \int_0^T H_n(u, X_u^{n,\hata}, Y_u^{n,\hata}, \hata_u) - H_n(u, X^{n,\alpha'}_u,Y_u^{n,\hata}, \alpha'_u)\diff u   \big]\\
			&\ge \E\big[\int_0^T \partial_\alpha H_n(u, X^{n,\hata}_u, Y^{n,\hata}_u, \hata_u)\cdot(\hata_u - \alpha'_u)\diff u  \big],
		\end{align*}
		where the latter inequality follows by concavity of $H_n$. Coming back to the expression of interest $J(\hata) - J(\alpha')$, we have
		\begin{align*}
			J(\hata) - J(\alpha') & = J(\hata) - J_n(\hata) + J_n(\hata) - J_n(\alpha') + J_n(\alpha') - J(\alpha')\\
			&\ge J(\hata) - J_n(\hata) + \E\Big[\int_0^T \partial_\alpha H_n(u, X^{n,\hata}_u, Y^{n,\hata}_u, \hata_u)\cdot(\hata_u - \alpha'_u)\diff u  \Big]\\
			&\quad  + J_n(\alpha') - J(\alpha').
		\end{align*}
		Since $ b_{1,n}$ does not depend on $\alpha$, we have that
		\\		$\partial_\alpha H_n(u, X^{n,\hata}_u, Y^{n,\hata}_u, \hata_u) = \partial_\alpha b_{2,n}(X^{n,\hata}_u)b_{3}(u, \hata_u)Y^{n,\hata}_u + \partial_\alpha f(u, X^{n,\hata}_u,\hata_u)$.
		Thus, taking the limit as $n$ goes to infinity, it follows by Lemmas \ref{lem:conv.Xnn}, \ref{lem:J.continuous} and \ref{lem:conv.y.phi} that 
		\begin{align*}
			J(\hata) - J(\alpha') \ge \E\Big[\int_0^T \partial_\alpha H(u, X^{n,\hata}_u, Y^{n,\hata}_u, \hata_u)\cdot(\hata_u - \alpha'_u)\diff u  \Big].
		\end{align*}
		Since $\hata$ satisfies \eqref{eq:suff.con}, we therefore conclude that $J(\hata) \ge J(\alpha')$.

		
		\appendix

		\section{Proof of Lemma \ref{lem:auxlemma}}\label{profauxlem}
		
		Let $n \in \mathbb{N}$ and 	set
		\begin{equation}
			\label{eqgirs1 n}
			u_{i,n}=\frac{\sigma_i}{\sigma_1^2+\cdots + \sigma_d^2 }b_n.
		\end{equation}
		Since $b_{2,n}$ is the difference of two increasing functions, it follows that $|b_{2,n}^\prime|=\bar{b}_{2,n}^\prime+\tilde{b}_{2,n}^\prime$. Then using the following inequalities $|e^a-e^b|\leq |a-b||e^a+e^b|$ and $|ab|\leq \frac{\epsilon}{2}a^2+\frac{2}{\epsilon}b^2$, the boundedness of $b_3(u,\omega)$, the Girsanov theorem, there exists a constant $C$ that can change for one line to the other such that
		\begin{align*}
			&	\mathbb{E}\big[\big\{\exp\big(
			c\int_{t'}^t\big\{ \partial_xb_{1,n}(u,X^n_u)+b^\prime_{2,n}(X^n_u)\cdot b_3(u,\omega)\big\}\diffns u\big)-1\big\}^2\big]\notag\\
			\leq&\mathbb{E}\big[\big|
			c\int_{t'}^t\big\{\partial_xb_{1,n}(u,X^n_u)+b^\prime_{2,n}(X^n_u)\cdot b_3(u,\omega)\big\}\diffns u\big|^2\times\\
			&\big\{\exp\Big(
			c\int_{t'}^t\big\{\partial_xb_{1,n}(u,X^n_u)+b^\prime_{1,n}(X^n_u)\cdot b_3(u,\omega)\big\}\diffns u\big)+1\big\}^2\big]\\
			\leq &\mathbb{E}\big[ \big|
			c\int_{t'}^t\big\{ \partial_xb_{1,n}(u,X^n_u)+b^\prime_{2,n}(X^n_u)\cdot b_3(u,\omega)\big\}\diffns u\big|^2\big]^{1/2}\\
			&\times\mathbb{E}\big[\big\{\exp\big(
			c\int_{t'}^t\big\{ \partial_xb_{1,n}(u,X^n_u)+b^\prime_{2,n}(X^n_u) b_3(u,\omega)\big\}\diffns u\big)+1\big\}^2\big]^{1/2}\\
			\leq &C\mathbb{E}\big[ \big|
			c\int_{t'}^t\big\{ \partial_xb_{1,n}(u,X^n_u)+b^\prime_{2,n}(X^n_u) b_3(u,\omega)\big\}\diffns u\big|^2\big]^{1/2}\\
			&\times\big(\mathbb{E}\big[\big\{\exp\big(
			c\int_{t'}^t\big\{\partial_xb_{1,n}(u,X^n_u)+b^\prime_{2,n}(X^n_u) b_3(u,\omega)\big\}\diffns u\big)\big\}^2\big]^{1/2}+1\big)\\
			\leq&C\mathbb{E}[|M(\omega)|^4]^{1/4}\big(\mathbb{E}\big[ \big|
			c\int_{t'}^t\partial_xb_{1,n}(u,X^{n}_u)\diffns u\big|^2\big]^{1/2}+\mathbb{E}\big[ \big|
			c\int_{t'}^t\bar{b}_{2,n}^\prime(X^{n}_u)\diffns u\big|^4\big]^{1/4}\notag\\
			&+\mathbb{E}\big[ \big|
			c\int_{t'}^t\tilde{b}_{2,n}^\prime(X^{n}_u)\diffns u\big|^4\big]^{1/4}\big)\big(\E\big[\exp 2c\int_{t'}^t\partial_xb_{1,n}(u,X^{n}_u)\diffns u\big]^{1/4}\notag\\
			&\times\mathbb{E}\big[\exp\big(
			4cM(\omega)\int_{t'}^t\big(\bar{b}_{2,n}^\prime(X^{n}_u)+\tilde{b}_{2,n}^\prime(X^{n}_u)\big)\diffns u\big)\big]^{1/4}+1\big)\\
			\leq& C(|t-t'|^{1/2})\big(\mathbb{E}\big[\exp\big\{
			8cM(\omega)\int_{t'}^t\bar{b}_{2,n}^\prime(X^{n}_u)\diffns u\big\}\big]^{1/8}\\
			&\times\mathbb{E}\big[\exp\big\{
			8cM(\omega)\int_{t'}^t\tilde{b}_{2,n}^\prime(X^{n}_u)\diffns u\big\}\big]^{1/8}+1\big)\\
		\end{align*}
		\begin{align*}
			\leq& C(|t-t'|^{1/2})\big(\mathbb{E}\big[\exp\big\{
			\frac{256}{k}c^2M^2(\omega)\big\}\exp\big\{\frac{k}{4}\big|\int_{t'}^t\bar{b}_{2,n}^\prime(X^{n}_u)\diffns u\big|^2\big\}\big]^{1/8}\\
			&\times\mathbb{E}\big[\exp\big\{
			\frac{256}{k}c^2M^2(\omega)\big\}\exp\big\{\frac{k}{4}\big|\int_{t'}^t\tilde{b}_{2,n}^\prime(X^{n}_u)\diffns u\big|^2\big\}\big]^{1/8}+1\big)\\
			\leq& C(|t-t'|^{1/2})\big(\mathbb{E}\big[\exp\big\{ \frac{256}{k}c^2M^2(\omega)\big\}\big]^{\frac{1}{8}}\mathbb{E}\big[\exp\big\{ \frac{k}{2}\big|\int_{t'}^t\bar{b}_{2,n}^\prime(X^{n}_u)\diffns u\big|^2\big\}\big]^{\frac{1}{8}}\\
			&\times\mathbb{E}\big[\exp\big\{ \frac{256}{k}c^2M^2(\omega)\big\}\big]^{\frac{1}{8}}\mathbb{E}\big[\exp\big\{ \frac{k}{2}\big|\int_{t'}^t\tilde{b}_{2,n}^\prime(X^{n}_u)\diffns u\big|^2\big\}\big]^{\frac{1}{2}}+1\big)\\
			\leq&	 C(|t-t'|^{1/2})\big(\mathbb{E}\big[\mathcal{E}\big(\int_0^Tu_n(r,x+\sigma\cdot B_r,\omega)\diffns B_r\big)\exp\big\{\frac{k}{2}\big|\int_{t'}^t\bar{b}_{2,n}^\prime(x+\sigma\cdot B_u)\diffns u\big|^2\big\}\big]^{\frac{1}{8}}\\
			&\times \mathbb{E}\big[\mathcal{E}\big(\int_0^Tu_n(r,x+\sigma\cdot B_r,\omega)\diffns B_r\big)\exp\big\{ \frac{k}{2}\big|\int_{t'}^t\tilde{b}_{2,n}^\prime(x+\sigma\cdot B_u)\diffns u\big|^2\big\}\big]^{\frac{1}{8}}+1\big)\\
			\leq&	 C(|t-t'|^{1/2})\big\{\mathbb{E}\big[\mathcal{E}\big(\int_0^Tu_n(r,x+\sigma\cdot B_r,\omega)\diffns B_r\big)^2\big]^{\frac{1}{16}}\\
			&\times \mathbb{E}\big[\exp\big\{ k\big|\int_{t'}^t\bar{b}_{2,n}^\prime(x+\sigma\cdot B_u)\diffns u\big|^2\big\}\big]^{\frac{1}{16}}\mathbb{E}\big[\mathcal{E}\big(\int_0^Tu_n(r,x+\sigma\cdot B_r,\omega)\diffns B_r\big)^2\big]^{\frac{1}{16}}\\
			&\times \mathbb{E}\big[\exp\big\{ k\big|\int_{t'}^t\tilde{b}_{2,n}^\prime(x+\sigma\cdot B_u)\diffns u\big|^2\big\}\big]^{\frac{1}{16}}+1\big\}
			\leq	C(\|b_n\|_\infty)(|t-t'|^{1/2})\\
			&\leq 	C(|t-t'|^{1/2}),
		\end{align*}
		where we have used Proposition \ref{propSha1} and \cite[Proposition 3.7 ]{MMNPZ13} (see also \cite[Proposition 2.2]{Da07}) and the fact that $ \underset{n}{\sup}\mathbb{E}\big[\mathcal{E}\big(\int_0^Tu_n(r,x+\sigma\cdot B_r,\omega)\diffns B_r\big)^2\big]\leq C$. This concludes the proof.

		\section{Proof of Theorem \ref{Thmexplimalder}}\label{proofthmexplmal}
		We will only prove \eqref{ExpliciMalder}. This relies on fact that the set
		$$
		\Big\{h\otimes \mathcal{E}\Big(\int_0^1\dot{\varphi}(u)\mathrm{d}B(u)\Big):\varphi\in C^{1}_b(\mathbb{R}),h\in C^\infty_0(U)\Big\}
		$$
		spans a dense subspace in $L^2(U\times\Omega)$. Let $X^{x,n}$ be the solution of the SDE corresponding to $b_{n}$.
		Then, using \eqref{MalliavinDerivativeEquationr} and \eqref{eqLTf1}, we have  
		\begin{align} \label{qede2c}
			D^i_tX^{x,n}_s =& e^{-\int_t^s \int_{\mathbb{R}}b_n(u,z,\omega) L^{X^{x,n}}(\diffns u,\diffns z)}\notag\\
			&\times\Big(\int_t^sb_{2,n}(X^{x,n}_u) D^i_tb_3(u,\omega)e^{-\int_t^u\int_{\mathbb{R}} b_n(r,z,\omega) L^{X^{x,n}}(\diffns r,\diffns z)}\diffns u + \sigma_i \Big).
		\end{align}
		We know from Corollary \ref{maincor} that $(X^{x,n}_{t})_{n \geq 1}$ is relatively compact in $L^2(\Omega,P)$ and $\|D_sX^{x,n}_t\|_{L^2(P\otimes \diffns t)}$ is uniformly bounded in $n$. Thus $(D_sX^{x,n}_t)_{n \geq 1}$ converges weakly to $D_sX^{x}_t$ in $L^2(P;\mathbb{R})$, thanks to \cite[Lemma 1.2.3]{Nua06}. The proof is completed if we show that for every $\varphi \in C^1_b([0,T],\mathbb{R}^d)$ it holds
		\begin{align*}
			&\Big\{e^{-\int_t^s \int_{\mathbb{R}}b_n(u,z,\omega) L^{X^{x,n}}(\diffns u,\diffns z)}\Big(\int_t^sb_{2,n}(X^{x,n}_u) D^i_tb_3(u,\omega)e^{-\int_t^u\int_{\mathbb{R}} b_n(r,z,\omega) L^{X^{x,n}}(\diffns r,\diffns z)}\diffns u\\
			& + \sigma_i \Big)\mathcal{E}\Big(\int_0^T\dot \varphi_r\diffns B_r\Big)\Big\}_{n}
		\end{align*} 
		converges to 
		\begin{align*}
			&e^{-\int_t^s \int_{\mathbb{R}}b(u,z,\omega) L^{X^x}(\diffns u,\diffns z)}\Big(\int_t^sb_{2}(X_u) D^i_tb_3(u,\omega)e^{-\int_t^u\int_{\mathbb{R}} b(r,z,\omega) L^{X^x}(\diffns r,\diffns z)}\diffns u\\
			& + \sigma_i \Big)\mathcal{E}\Big(\int_0^T\dot\varphi_r\diffns B_r\Big)
		\end{align*}  
		in expectation. 
		We only show that the convergence in expectation for the first term. 
		Using Girsanov theorem and the Cameron-Martin theorem, we have
		\begin{align}
			L=	&	\Big|\E\Big[\mathcal{E}\Big(\int_0^T\dot\varphi_r\diffns B_r\Big)
			\Big\{e^{-\int_t^s\int_{\mathbb{R}} b_n(u,z,\omega) L^{X^{x,n}}(\diffns u,\diffns z)}\notag\\
			&\times \int_t^sb_{2,n}(X^{x,n}_u)\cdot D^i_tb_3(u,\omega)e^{-\int_t^u \int_{\mathbb{R}} b_n(r,z,\omega) L^{X^{x,n}}(\diffns r,\diffns z)}\diffns u\notag\\
			&	-e^{-\int_t^s \int_{\mathbb{R}} b(u,z,\omega) L^{X^x}(\diffns u,\diffns z)}\int_t^sb_{2}(X_u)\cdot D^i_tb_3(u,\omega)e^{-\int_t^u \int_{\mathbb{R}}b(r,z,\omega) L^{X^x}(\diffns r,\diffns z)}\diffns u\Big\}
			\Big]\Big|\notag\\
			\leq & \Big|\E\Big[\Big\{\mathcal{E}\Big(\int_0^T\Big\{\tilde u_n(r,\omega,B_r )+\dot\varphi_r\Big\}\diffns B_r\Big)-\mathcal{E}\Big(\int_0^T\Big\{\tilde u(r,\omega,B_r )+\dot\varphi_r\Big\}\diffns B_r\Big)\Big\}\notag\\
			&\times e^{-\int_t^s\int_{\mathbb{R}} b_n(u,z,\omega+\dot\varphi_u) L^{\|\sigma\|B^x_\sigma}(\diffns u,\diffns z)}
			\int_t^sb_{2,n}(B^x_u)\cdot D^i_tb_3(u,\omega+\dot\varphi_u)\notag\\
			&\times e^{-\int_t^u \int_{\mathbb{R}} b_n(r,z,\omega+\dot\varphi_r) L^{\|\sigma\|B^x_\sigma}(\diffns r,\diffns z)}\diffns u\Big]\Big|\notag \\
			&+\big|\E\big[\mathcal{E}\big(\int_0^T\big\{\tilde u(r,\omega,B_r )+\dot\varphi_r\big\}\diffns B_r\big)\big(e^{-\int_t^s \int_{\mathbb{R}} b(u,z,\omega+\dot\varphi_u) L^{\|\sigma\|B^x_\sigma}(\diffns u,\diffns z)}\notag\\
			&-e^{-\int_t^s\int_{\mathbb{R}} b_n(u,z,\omega+\dot\varphi_u) L^{\|\sigma\|B^x_\sigma}(\diffns u,\diffns z)}\big)\notag\\
			&\times \int_t^sb_{2}(B^x_u)\cdot D^i_tb_3(u,\omega+\dot\varphi_u)e^{-\int_t^u \int_{\mathbb{R}}b(r,z,\omega+\dot\varphi_r) L^{\|\sigma\|B^x_\sigma}(\diffns r,\diffns z)}\diffns u\big]\big|\notag\\
			&+\big|\E\big[e^{-\int_t^s\int_{\mathbb{R}} b_n(u,z,\omega+\dot\varphi_u) L^{\|\sigma\|B^x_\sigma}(\diffns u,\diffns z)}\notag\\
			&\times 
			\int_t^sD^i_tb_3(u,\omega+\dot\varphi_u)\big(b_{2,n}(B^x_u) -b_{2}(B^x_u)\big)e^{-\int_t^u \int_{\mathbb{R}} b_n(r,z,\omega+\dot\varphi_r) L^{\|\sigma\|B^x_\sigma}(\diffns r,\diffns z)}\diffns u	\big]\big|\notag\\
			&+\big|\E\big[e^{-\int_t^s\int_{\mathbb{R}} b_n(u,z,\omega+\dot\varphi_u) L^{\|\sigma\|B^x_\sigma}(\diffns u,\diffns z)}\int_t^sb_{2}(B^x_u)\cdot D^i_tb_3(u,\omega+\dot\varphi_u)\notag\\
			&\times\big(e^{-\int_t^u \int_{\mathbb{R}}b(r,z,\omega+\dot\varphi_r) L^{\|\sigma\|B^x_\sigma}(\diffns r,\diffns z)}-e^{-\int_t^u \int_{\mathbb{R}} b_n(r,z,\omega+\dot\varphi_r) L^{\|\sigma\|B^x_\sigma}(\diffns r,\diffns z)}\big)\diffns u	\big]\big|\notag\\
			=&I_{1,n}+I_{2,n}+I_{3,n}+I_{4,n}\notag
		\end{align}
		The convergence of $I_{1,n}$ follows as in the previous computations. We only show the convergence of $I_{2,n}$. Using H\"older inequality, the theorem hypothesis, we have
		\begin{align}
			I_{2,n}
			\leq & \E\big[\mathcal{E}\big(\int_0^T\big\{\tilde u(r,\omega,B_r )+\dot\varphi_r\big\}\diffns B_r\big)^{12}\big]^{\frac{1}{12}}\notag\\
			&\times\big[\big(e^{-\int_t^s \int_{\mathbb{R}} b(u,z,\omega+\dot\varphi_u) L^{\|\sigma\|B^x_\sigma}(\diffns u,\diffns z)}-e^{-\int_t^s\int_{\mathbb{R}} b_n(u,z,\omega+\dot\varphi_u) L^{\|\sigma\|B^x_\sigma}(\diffns u,\diffns z)}\big)^4\big]^{\frac{1}{4}}\notag\\
			&\times \E\big[\big(\int_t^sb_{2}(B^x_u)\cdot D^i_tb_3(u,\omega+\dot\varphi_u)e^{-\int_t^u \int_{\mathbb{R}}b(r,z,\omega+\dot\varphi_r) L^{\|\sigma\|B^x_\sigma}(\diffns r,\diffns z)}\diffns u\big)^{\frac{3}{2}}\big]^{\frac{2}{3}}\notag
		\end{align}
		\begin{align}
			\leq	& C\E\big[\big|e^{-\int_t^s \int_{\mathbb{R}} b(u,z,\omega+\dot\varphi_u) L^{\|\sigma\|B^x_\sigma}(\diffns u,\diffns z)}-e^{-\int_t^s\int_{\mathbb{R}} b_n(u,z,\omega+\dot\varphi_u) L^{\|\sigma\|B^x_\sigma}(\diffns u,\diffns z)}\big|\big]^{\frac{1}{8}}\notag\\
			&\times \E\big[\big(e^{-\int_t^s \int_{\mathbb{R}} b(u,z,\omega+\dot\varphi_u) L^{\|\sigma\|B^x_\sigma}(\diffns u,\diffns z)}+e^{-\int_t^s\int_{\mathbb{R}} b_n(u,z,\omega+\dot\varphi_u) L^{\|\sigma\|B^x_\sigma}(\diffns u,\diffns z)}\big)^7\big]^{\frac{1}{8}}\notag\\
			&\times \E\big[\big(\int_0^T \big(D^i_tb_3(u,\omega+\dot\varphi_u)\big)^2\diffns u\big)^{2}\big]^{\frac{1}{4}}	\notag\\
			&	\E\big[\big(\int_0^Te^{-2\int_t^u \int_{\mathbb{R}}b(r,z,\omega+\dot\varphi_r) L^{\|\sigma\|B^x_\sigma}(\diffns r,\diffns z)}\diffns u\big)^{\frac{6}{5}}\big]^{\frac{5}{12}}\notag\\
			\leq	&	C \E\big[\big|-\int_t^s \int_{\mathbb{R}} b(u,z,\omega+\dot\varphi_u) L^{\|\sigma\|B^x_\sigma}(\diffns u,\diffns z)\notag\\
			&+\int_t^s\int_{\mathbb{R}} b_n(u,z,\omega+\dot\varphi_u) L^{\|\sigma\|B^x_\sigma}(\diffns u,\diffns z)\big|^2\big]^{\frac{1}{16}}\notag\\
			&\times \E\big[\big|e^{-\int_t^s \int_{\mathbb{R}} b(u,z,\omega+\dot\varphi_u) L^{\|\sigma\|B^x_\sigma}(\diffns u,\diffns z)}+e^{-\int_t^s\int_{\mathbb{R}} b_n(u,z,\omega+\dot\varphi_u) L^{\|\sigma\|B^x_\sigma}(\diffns u,\diffns z)}\big|^2\big]^{\frac{1}{16}}\notag\\
			\leq	&	C \E\big[\big|\int_t^s \int_{\mathbb{R}} b(u,z,\omega+\dot\varphi_u) L^{\|\sigma\|B^x_\sigma}(\diffns u,\diffns z)\notag\\
			&-\int_t^s\int_{\mathbb{R}} b_n(u,z,\omega+\dot\varphi_u) L^{\|\sigma\|B^x_\sigma}(\diffns u,\diffns z)\big|^2\big]^{\frac{1}{16}}\notag\\
			&	+ \E\big[\big|\int_t^s \int_{\mathbb{R}} b_1(z)b_2(u,\omega+\dot\varphi_u) L^{\|\sigma\|B^x_\sigma}(\diffns u,\diffns z)\notag\\
			&-\int_t^s\int_{\mathbb{R}} b_{1,n}(z)b_2(u,\omega+\dot\varphi_u) L^{\|\sigma\|B^x_\sigma}(\diffns u,\diffns z)\big|^2\big]^{\frac{1}{16}}\Big)\notag		
		\end{align}
		where the last inequality follows from \eqref{eqLTf1} (with $X$ been replaced by the Brownian motion $B$) and similar reasoning as in Lemma \ref{lem:auxlemma}. Thanks to Lemma \ref{lemeqra1}, we have
		$$
		\int_t^s\int_{\mathbb{R}} b_n(u,z,\omega+\dot\varphi_u) L^{\|\sigma\|B^x}(\diffns u,\diffns z)= -\int_t^s\frac{\partial b_n}{\partial x}(u, \|\sigma\|B^x_u,\omega+\dot\varphi_u)\diffns u.
		$$
		One can show that the first term on the right side in the above equation converges to $0$ (see \cite{BMBPD17, MenTan19}). As for the second term, define $\bar b(t,\omega,x):=b_2(x)b_3(t,\omega)$ and $\check b(t,\omega,x):=b_3(t,\omega)\int_{-\infty}^xb_2(y)\diffns y=b_3(t,\omega)\check b_2(x)$ (and define $\check b_n, \bar b_n, \check b_{2,n}$ similarly). Using \cite[Theorem 5.2]{Ein2000}, we have 
		\begin{align*}
			&	\check b(s,\|\sigma\|B^x_s,\omega+\dot\varphi_s)\\
			=&\check  b(t,\|\sigma\|B^x_t, \omega+\dot\varphi_t)+\int_t^s\frac{\partial \check  b}{\partial u}(u, \|\sigma\|B^x_u,\omega+\dot\varphi_u)\diffns u \\
			&+\|\sigma\| \int_t^s\frac{\partial \check b}{\partial x}(u, \|\sigma\|B^x_u,\omega+\dot\varphi_u)\diffns B^{x,\sigma}_u+\frac{\|\sigma\|^2}{2}  \int_t^s\frac{\partial^2 \check b}{\partial x^2}(u, \|\sigma\|B^x_u,\omega+\dot\varphi_u)\diffns u\\
			=&\check b(t,\|\sigma\|B^x_t, \omega+\dot\varphi_t)+\int_t^s\frac{\partial \check b}{\partial u}(u, \|\sigma\|B^x_u,\omega+\dot\varphi_u)\diffns u \\
			&+\|\sigma\| \int_t^s\bar b(u, \|\sigma\|B^x_u,\omega+\dot\varphi_u)\diffns B^{x,\sigma}_u+\frac{\|\sigma\|^2}{2} \int_t^s\frac{\partial \bar b}{\partial x}(u, \|\sigma\|B^x_u,\omega+\dot\varphi_u)\diffns u
		\end{align*}
		Therefore
		\begin{align*}
			&	\int_t^s\int_{\mathbb{R}} \bar b(u,z,\omega+\dot\varphi_u) L^{\|\sigma\|B^x}(\diffns u,\diffns z)\\
			=&-	\|\sigma\|^2\int_t^s\frac{\partial \bar b}{\partial x}(u, \|\sigma\|B^x_u,\omega+\dot\varphi_u)\diffns u	\\
			=&2\big(-\check b(s,\|\sigma\|B^x_s,\omega+\dot\varphi_s)+\check b(t,\|\sigma\|B^x_t, \omega+\dot\varphi_t)+\int_t^s\frac{\partial\check b}{\partial u}(u, \|\sigma\|B^x_u,\omega+\dot\varphi_u)\diffns u \\
			&+\|\sigma\| \int_t^s\bar b(u, \|\sigma\|B^x_u,\omega+\dot\varphi_u)\diffns B^{x,\sigma}_u\big)
		\end{align*}	
		Substituting this above and using H\"older, Burkholder inequalities  and the definition of $\tilde b$ gives
		\begin{align*}
			I_{2,n}
			\leq & C \big\{\E\big[\big|\check b(s,\|\sigma\|B^x_s,\omega+\dot\varphi_s)-\check b_n(s,\|\sigma\|B^x_s,\omega+\dot\varphi_s)\big|^2\big]^{\frac{1}{16}}\\
			&+\E\big[\big|\check b(t,\|\sigma\|B^x_t, \omega+\dot\varphi_t)- \check b_n(t,\|\sigma\|B^x_t, \omega+\dot\varphi_t)\big|^2\big]^{\frac{1}{16}}\\
			&+\big(\int_t^s\E\big[\big|\frac{\partial \check b}{\partial u}(u, \|\sigma\|B^x_u,\omega+\dot\varphi_u)-\frac{\partial \check b_n}{\partial u}(u, \|\sigma\|B^x_u,\omega+\dot\varphi_u)\big|^2 \big]\diffns u \big)^{\frac{1}{16}} \\
			&+\|\sigma\|^{\frac{1}{8}} \big( \int_t^s\E\big[\big|\bar b(u, \|\sigma\|B^x_u,\omega+\dot\varphi_u)-\bar b_n(u, \|\sigma\|B^x_u,\omega+\dot\varphi_u)\big|^2\big]\diffns u\big)^{\frac{1}{16}}\big\}\\	\leq & C\Big\{
			\E[|M(\omega)|^4]^{\frac{1}{32}}\E\big[\big| \check b_2(\|\sigma\|B^x_s)-  \check b_{2,n}(\|\sigma\|B^x_s)\big|^4\big]^{\frac{1}{32}}\\
			&+\E[|M(\omega)|^4]^{\frac{1}{32}}\E\big[\big| \check b_2(\|\sigma\|B^x_t)-  \check b_{2,n}(\|\sigma\|B^x_t)\big|^4\big]^{\frac{1}{32}}\\
			&+\E[|M(\omega)|^4]^{\frac{1}{32}}\big(\int_t^s\E\big[\big|b_2(\|\sigma\|B^x_u)-b_{2,n}( \|\sigma\|B^x_u)\big|^4 \big]\diffns u \big)^{\frac{1}{32}} \\
			&+\|\sigma\|^2 \E[|M(\omega)|^4]^{\frac{1}{32}}\big(\int_t^s\E\big[\big|b_2(\|\sigma\|B^x_u)-b_{2,n}( \|\sigma\|B^x_u)\big|^4 \big]\diffns u \big)^{\frac{1}{32}}\Big\}\\
			\leq & C (\|\sigma\|^2)\Big\{
			\E[|M(\omega)|^4]^{\frac{1}{32}}\big\{\E\big[\big|  \check b_2(s,\|\sigma\|B^x_s)-  \check b_{2,n}(s,\|\sigma\|B^x_s)\big|^4\big]^{\frac{1}{32}}\\
			&+\E\big[\big|  b_2(\|\sigma\|B^x_t)-   b_{2,n}(\|\sigma\|B^x_t)\big|^4\big]^{\frac{1}{32}}\notag\\
			&+\big(\int_t^s\E\big[\big|b_2(\|\sigma\|B^x_u)-b_{2,n}( \|\sigma\|B^x_u)\big|^4 \big]\diffns u \big)^{\frac{1}{32}} \big\}\Big\}.
		\end{align*}
		Since $ b_2$ and $b_{2,n}$ are uniformly bounded, if follows that each of the above terms goes to zero as $n$ goes to $\infty$ (see for e.g., \cite{BMBPD17, MenTan19}). The proof is completed. 
		
		\section{Proposition \ref{prop:EisenbaumBV}}\label{proofeisen1} This is a modification of the proof of \cite[Theorem 5.1]{Ein2000}.
		Define
		\begin{align*}
			I_{\Delta}=\sum\limits_{\begin{subarray}{ll}
					0\leq i\leq n\\0\leq j\leq m
			\end{subarray}}A_1(y_i)A_2(s_j)(L^{X^x}(s_{j+1},y_{i+1})-L^{X^x}(s_{j},y_{i+1})-L^{X^x}(s_{j},y_{i+1})+L^{X^x}(s_{j},y_{i})).
		\end{align*}
		We will show that
		\begin{align}\label{EqEisenBV}
			\lim\limits_{\vert\Delta\vert\to0}I_{\Delta}=&A_1(b)\int_0^tA_2(s)\,\mathrm{d}_sL^{X^x}(s,b)-A_1(a)\int_0^tA_2(s)\,\mathrm{d}_sL^{X^x}(s,a)\notag\\
			&-\int_a^b\Big(\int_0^tA_2(s)\,\mathrm{d}_sL^{X^x}(s,y)\Big)\mathrm{d}A_1(y).
		\end{align}
		We develop this sum according to the rules of integration by parts and we obtain
		\begin{align*}
			I_{\Delta}=&\sum\limits_{\begin{subarray}{ll}
					0\leq i\leq n\\0\leq j\leq m
			\end{subarray}}A_1(y_i)A_2(s_j)(L^{X^x}(s_{j+1},y_{i+1})-L^{X^x}(s_{j},y_{i+1}))\\&+\sum\limits_{\begin{subarray}{ll}
					0\leq i\leq n\\0\leq j\leq m
			\end{subarray}}A_1(y_i)A_2(s_j)(L^{X^x}(s_{j},y_{i})-L^{X^x}(s_{j},y_{i+1}))\\
			&+\sum\limits_{\begin{subarray}{ll}
					0\leq i\leq n\\0\leq j\leq m
			\end{subarray}}A_1(y_{i+1})A_2(s_{j})L^{X^x}(s_{j+1},y_{i+1})-\sum\limits_{\begin{subarray}{ll}
					0\leq i\leq n\\0\leq j\leq m
			\end{subarray}}A_1(y_{i+1})A_2(s_{j})L^{X^x}(s_{j+1},y_{i+1})\\&+\sum\limits_{\begin{subarray}{ll}
					0\leq i\leq n\\0\leq j\leq m
			\end{subarray}}A_1(y_{i+1})A_2(s_{j})L^{X^x}(s_{j},y_{i+1})-\sum\limits_{\begin{subarray}{ll}
					0\leq i\leq n\\0\leq j\leq m
			\end{subarray}}A_1(y_{i+1})A_2(s_{j})L^{X^x}(s_{j},y_{i+1}).
		\end{align*}
		One can put different terms together so that
		\begin{align*}
			I_{\Delta}=&\sum\limits_{\begin{subarray}{ll}
					0\leq i\leq n\\0\leq j\leq m
			\end{subarray}}A_2(s_{j})\Big\{A_1(y_{i+1})L^{X^x}(s_{j+1},y_{i+1})-A_1(y_{i})L^{X^x}(s_{j+1},y_{i})\\&\qquad\qquad\qquad\qquad-A_1(y_{i+1})L^{X^x}(s_{j},y_{i+1})+A_1(y_{i})L^{X^x}(s_{j},y_{i})\Big\}\\&+\sum\limits_{\begin{subarray}{ll}
					0\leq i\leq n\\0\leq j\leq m
			\end{subarray}}A_2(s_{j})\Big\{A_1(y_{i})L^{X^x}(s_{j+1},y_{i+1})-A_1(y_{i+1})L^{X^x}(s_{j+1},y_{i+1})\\&\qquad\qquad\qquad\qquad-A_1(y_{i})L^{X^x}(s_{j},y_{i+1})+A_1(y_{i+1})L^{X^x}(s_{j},y_{i+1})\Big\}\\
			=&\sum\limits_{\begin{subarray}{ll}
					0\leq i\leq n\\0\leq j\leq m
			\end{subarray}}A_2(s_{j})\Big\{A_1(y_{i+1})(L^{X^x}(s_{j+1},y_{i+1})-L^{X^x}(s_{j},y_{i+1}))\\&\qquad\qquad\qquad\qquad-A_1(y_i)(L^{X^x}(s_{j+1},y_{i})-L^{X^x}(s_{j},y_{i}))\Big\}\\&+\sum\limits_{\begin{subarray}{ll}
					0\leq i\leq n\\0\leq j\leq m
			\end{subarray}}A_2(s_{j})\Big\{(A_1(y_{i+1})-A_1(y_i))(L^{X^x}(s_{j},y_{i+1})-L^{X^x}(s_{j+1},y_{i+1}))\Big\}\\&:=I_{\Delta}^{(1)}+I_{\Delta}^{(2)}.
		\end{align*} 
		Summing over $i$ in $I^{(1)}_{\Delta}$, we have
		\begin{align*}
			I^{(1)}_{\Delta}=\sum\limits_{0\leq j\leq m}A_2(s_j)\left\{A_1(b)(L^{X^x}(s_{j+1},b)-L^{X^x}(s_{j},b))-A_1(a)(L^{X^x}(s_{j+1},a)-L^{X^x}(s_{j},a))\right\}.
		\end{align*}
		As a consequence,
		\begin{align*}
			\lim\limits_{\vert\Delta\vert\to0}I^{(1)}_{\Delta}=A_1(b)\int_0^tA_2(s)\,\mathrm{d}_sL^{X^x}(s,b)-A_1(a)\int_0^tA_2(s)\,\mathrm{d}_sL^{X^x}(s,a).
		\end{align*}
		Moreover, $I_{\Delta}^{(2)}$ rewrites
		\begin{align*}
			I_{\Delta}^{(2)}=-\sum\limits_{0\leq i\leq n}\Big(\sum\limits_{0\leq j\leq m}A_2(s_j)(L^{X^x}(s_{j+1},y_{i+1})-L^{X^x}(s_{j},y_{i+1}))\Big)(A_1(y_{i+1})-A_1(y_i)).
		\end{align*}
		This implies that
		\begin{align*}
			\lim\limits_{\vert\Delta\vert\to0}I^{(2)}_{\Delta}=-\int_a^b\Big(\int_0^tA_2(s)\,\mathrm{d}_sL^{X^x}(s,y)\Big)\mathrm{d}A_1(y).
		\end{align*}
		This ends the proof of \ref{EqEisenBV}. Letting $a$ goes to $-\infty$, $b$ goes to $\infty$, we obtain \ref{EqEisenBV00}.

		\section{Garsia-Rodemich-Rumsey theorem}
		The following result is a version of Garsia-Rodemich-Rumsey theorem (see \cite{KwaRo04}).
		\begin{thm}[Garsia-Rodemich-Rumsey]\label{LemGRR}
			Let $K$ be a compact interval endowed with a metric $\mathcal{D}$. Define $\widehat\sigma(r):=
			\underset{x\in K}{\inf} \lambda(B(x,r))$, where $B(x,r):=\{y\in K : \mathcal{D}(x,y) \leq r\}$ denotes the ball of radius $r$ centred in
			$x \in K$ and  $\lambda$ is the Lebesgue measure. Let $\Psi: [0,\infty)\rightarrow [0,\infty)$ be a positive, increasing and convex function with $\Psi(0) = 0$ and denote by $\Psi^{-1}$ its inverse, which is a positive, increasing
			and concave function. Assume $f:[0,\infty)\rightarrow [0,\infty)$ is continuous on $(K,d)$ and let
			$$
			U=\int_{K\times K}\Psi\big(\frac{|f(t)-f(s)|}{\mathcal{D}(s,t)}\big)\diff s \diff t.
			$$
			Then 
			\begin{align}\label{eqGRR1}
				|f(t)-f(s)|\leq 18 \int_0^{\frac{\mathcal{D}(t,s)}{2}}\Psi^{-1}\big(\frac{U}{\widehat \sigma^2(r)}\big)\diff r.
			\end{align}
			
		\end{thm}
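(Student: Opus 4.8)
The plan is to establish Theorem~\ref{LemGRR} by the classical dyadic chaining argument of Garsia, Rodemich and Rumsey, transported to the abstract compact metric space $(K,\mathcal{D})$ through the volume function $\sigma$; I follow the line of \cite{KwaRo04}. Introduce the energy function $\mathcal{I}(x):=\int_{K}\Psi\!\big(|f(x)-f(y)|/\mathcal{D}(x,y)\big)\,\diff y$, so that by Tonelli $\int_{K}\mathcal{I}(x)\,\diff x=U$; in particular $\mathcal{I}(x)<\infty$ for $\lambda$-a.e.\ $x$, while $\int_{B}\mathcal{I}\,\diff\lambda\le U$ and $\int_{B}\Psi(|f(x)-f(\cdot)|/\mathcal{D}(x,\cdot))\,\diff\lambda\le\mathcal{I}(x)$ for every ball $B$. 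The heart of the proof is the following selection lemma: given $x\in K$ with $\mathcal{I}(x)<\infty$, a point $z\in K$ and a radius $\rho>0$, the bound $\lambda(B(z,\rho))\ge\sigma(\rho)$ together with Chebyshev's inequality on $B(z,\rho)$, applied to the finite measures $\Psi(|f(x)-f(\cdot)|/\mathcal{D}(x,\cdot))\lambda$ and $\mathcal{I}\lambda$ (with thresholds chosen, e.g., as $4\mathcal{I}(x)/\sigma(\rho)$ and $4U/\sigma(\rho)$, so that the two ``bad'' sets cannot cover $B(z,\rho)$), yields a point $y^{\ast}\in B(z,\rho)$ with $\mathcal{I}(y^{\ast})\le 4U/\sigma(\rho)$ and, $\Psi^{-1}$ being increasing, $|f(x)-f(y^{\ast})|\le\mathcal{D}(x,y^{\ast})\,\Psi^{-1}\!\big(4\mathcal{I}(x)/\sigma(\rho)\big)$.

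Fix $s,t\in K$ with $d:=\mathcal{D}(s,t)>0$. First use the selection lemma with $z=s$, $\rho=d/2$ to pick an anchor $m\in B(s,d/2)$ with $\mathcal{I}(m)\le 4U/\sigma(d/2)$; then $\mathcal{D}(m,s)\le d/2$ and $\mathcal{D}(m,t)\le 3d/2$. Now run two dyadic chains issuing from $m$, one toward $s$ and one toward $t$, so that the fixed endpoints enter only as \emph{targets} of chains whose first link emanates from the energy-controlled point $m$. For the chain toward $t$, set $v_{0}:=m$, $\delta_{n}:=2^{-n}\mathcal{D}(m,t)$, and inductively apply the selection lemma with $z=t$, $\rho=\delta_{n+1}$ to obtain $v_{n+1}\in B(t,\delta_{n+1})$ with $\mathcal{I}(v_{n+1})\le 4U/\sigma(\delta_{n+1})$ and, using the bound $\mathcal{I}(v_{n})\le 4U/\sigma(\delta_{n})$ (valid for $n=0$ by the choice of $m$) and $\mathcal{D}(v_{n},v_{n+1})\le\delta_{n}+\delta_{n+1}=\tfrac32\delta_{n}=3\delta_{n+1}$, that $\sigma$ is nondecreasing, and $\delta_{n}=2\delta_{n+1}$,
\[
|f(v_{n})-f(v_{n+1})|\le\mathcal{D}(v_{n},v_{n+1})\,\Psi^{-1}\!\Big(\frac{4\mathcal{I}(v_{n})}{\sigma(\delta_{n+1})}\Big)\le 3\,\delta_{n+1}\,\Psi^{-1}\!\Big(\frac{16U}{\sigma(\delta_{n})\,\sigma(\delta_{n+1})}\Big)\le 3\,\delta_{n+1}\,\Psi^{-1}\!\Big(\frac{16U}{\sigma(\delta_{n+1})^{2}}\Big).
\]
Since $\mathcal{D}(v_{n},t)\le\delta_{n}\to0$ and $f$ is continuous, $f(v_{n})\to f(t)$, whence $|f(m)-f(t)|\le\sum_{n\ge0}|f(v_{n})-f(v_{n+1})|$; the chain toward $s$ is identical, and $|f(t)-f(s)|\le|f(t)-f(m)|+|f(m)-f(s)|$.

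To finish, observe that $r\mapsto\Psi^{-1}(16U/\sigma(r)^{2})$ is nonincreasing, so each geometric sum $\sum_{n\ge0}\delta_{n+1}\,\Psi^{-1}(16U/\sigma(\delta_{n+1})^{2})$ is dominated by a fixed multiple of $\int_{0}^{\mathcal{D}(m,\cdot)/2}\Psi^{-1}(16U/\sigma(r)^{2})\,\diff r$; since $\mathcal{D}(m,t)/2\le 3d/4$ (and $\mathcal{D}(m,s)/2\le d/4$) and, by convexity of $\Psi$ (equivalently concavity of $\Psi^{-1}$), the multiplicative constant $16$ inside $\Psi^{-1}$ can be traded for an absolute constant in front of the integral, the standard bookkeeping collapses both contributions into $18\int_{0}^{d/2}\Psi^{-1}(U/\sigma(r)^{2})\,\diff r$, which is \eqref{eqGRR1}. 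If $\mathcal{I}(s)$ or $\mathcal{I}(t)$ is infinite, apply the inequality along points of finite energy converging to $s,t$ and pass to the limit using continuity of $f$ and of the right-hand side in $(s,t)$.

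The main obstacle is precisely the selection lemma combined with the bookkeeping of the last two paragraphs: one must choose every link of the chain so as to control \emph{simultaneously} the present increment $|f(v_{n})-f(v_{n+1})|$ and the energy $\mathcal{I}(v_{n+1})$ that is needed for the next link, and one must route the fixed endpoints only as targets of chains from an energy-controlled anchor, so that the final estimate depends on $U$ alone with the asserted constant $18$. The volume function $\sigma$ is exactly what makes the Chebyshev selection succeed uniformly over $K$, since it keeps every ball $B(z,\rho)$ large enough that the two bad sets cannot exhaust it. All of this is classical (\cite{KwaRo04}; cf.\ the original Garsia--Rodemich--Rumsey argument), so the proof consists in carrying it out in the present metric setting.
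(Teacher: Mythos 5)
The paper does not actually prove Theorem~\ref{LemGRR}: it is stated in Appendix~A as a version of the Garsia--Rodemich--Rumsey inequality and cited to \cite{KwaRo04}, so there is no in-paper argument to compare against. Your proposal supplies the standard chaining proof, and its overall architecture (energy functional $\mathcal{I}$, Chebyshev selection on balls using the volume lower bound $\sigma$, dyadic chains emanating from an energy-controlled anchor, summation via monotonicity of $\Psi^{-1}(U/\sigma(\cdot)^2)$) is the correct one.

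However, as written there is a concrete gap in the induction. You choose the anchor $m\in B(s,d/2)$ with $\mathcal{I}(m)\le 4U/\sigma(d/2)$, set $\delta_n=2^{-n}\mathcal{D}(m,t)$ for the chain toward $t$, and then assert that $\mathcal{I}(v_0)=\mathcal{I}(m)\le 4U/\sigma(\delta_0)$ holds ``by the choice of $m$''. This is false in general: by the triangle inequality $\delta_0=\mathcal{D}(m,t)\ge d-\mathcal{D}(s,m)\ge d/2$, so $\sigma(\delta_0)\ge\sigma(d/2)$ and hence $4U/\sigma(\delta_0)\le 4U/\sigma(d/2)$. The inequality you established is therefore the \emph{weaker} one, and the base case of the chain toward $t$ is not in place. (The chain toward $s$ is fine, since there $\delta_0'=\mathcal{D}(m,s)\le d/2$.) The downstream inequality $\Psi^{-1}\bigl(16U/(\sigma(\delta_n)\sigma(\delta_{n+1}))\bigr)\le\Psi^{-1}\bigl(16U/\sigma(\delta_{n+1})^2\bigr)$ at $n=0$ would similarly need $\sigma(\delta_0)\ge\sigma(\delta_1)$ \emph{and} $\mathcal{I}(m)\le 4U/\sigma(\delta_0)$, the latter being exactly what is missing.

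A second, softer, issue is the constant. You invoke concavity of $\Psi^{-1}$ to trade the factor $16$ inside $\Psi^{-1}$ for a multiplicative constant outside, but since $\Psi^{-1}(0)=0$ and $\Psi^{-1}$ is concave, the only general bound is $\Psi^{-1}(16u)\le 16\Psi^{-1}(u)$. Combined with the factor $3$ from $\mathcal{D}(v_n,v_{n+1})\le 3\delta_{n+1}$ and the factor $2$ from comparing the geometric sum to the integral, and then doubled for the two chains (with the $t$-chain integrated up to $\mathcal{D}(m,t)/2$, which may be as large as $3d/4$), the constants you actually track land far above $18$. You wave at ``standard bookkeeping'', but the bookkeeping with the thresholds you fixed ($4\mathcal{I}(x)/\sigma(\rho)$, $4U/\sigma(\rho)$) does not collapse to $18\int_0^{d/2}\Psi^{-1}(U/\sigma(r)^2)\diff r$. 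The proof in \cite{KwaRo04} obtains $18$ with a more careful choice of chain and thresholds; you would need either to reproduce that selection exactly or to be content with a larger absolute constant, in which case you would not be proving the statement as written.
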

		\bibliographystyle{siamplain}
		\bibliography{Biblio+}
		

	\end{document}